\DeclareFontFamily{OT1}{rsfs}{}
\DeclareFontShape{OT1}{rsfs}{n}{it}{<-> rsfs10}{}
\DeclareMathAlphabet{\mathscr}{OT1}{rsfs}{n}{it}
\newtheorem{theorem}{Theorem}[section]
\newtheorem{lemma}[theorem]{Lemma}
\newtheorem{corol}[theorem]{Corollary}
\newtheorem{prop}[theorem]{Proposition}
\newtheorem{claim}[theorem]{Claim}
\theoremstyle{definition} }
\theoremstyle{remark} \newtheorem{remark}[theorem]{Remark}
\newtheorem{example}[theorem]{Example}}
\newcommand{\Abb}{{\mathbb{A}}}
\newcommand{\Cbb}{{\mathbb{C}}}
\newcommand{\Pbb}{{\mathbb{P}}}
\newcommand{\Tbb}{{\mathbb{T}}}
\newcommand{\Ubb}{{\mathbb{U}}}
\newcommand{\Zbb}{{\mathbb{Z}}}
\newcommand{\qede}{\hfill$\lrcorner$}
\newcommand{\cI}{{\mathcal I}}
\newcommand{\cJ}{{\mathcal J}}
\newcommand{\cO}{{\mathscr O}}
\newcommand{\Til}[1]{{\widetilde{#1}}}
\newcommand{\one}{1\hskip-3.5pt1}
\newcommand{\csm}{{c_{\text{SM}}}}
\newcommand{\cone}[1]{{#1}^\wedge}
\title[Chern classes of graph hypersurfaces]{
Chern classes of graph hypersurfaces and deletion-contraction 
relations}
\author{Paolo Aluffi}
\address{
Mathematics Department, 
Florida State University,
Tallahassee FL 32306, U.S.A.
}
\email{aluffi@math.fsu.edu}
\begin{document}

\begin{abstract}
We study the behavior of the Chern classes of graph hypersurfaces under 
the operation of deletion-contraction of an edge of the corresponding graph.
We obtain an explicit formula when the edge satisfies two technical conditions,
and prove that both these conditions hold when the edge is multiple in the 
graph. This leads to recursions for the Chern classes of graph hypersurfaces
for graphs obtained by 
adding parallel edges to a given (regular) edge.

Analogous results for the case of Grothendieck classes of graph hypersurfaces
were obtained in \cite{delecon}. Both Grothendieck classes and Chern classes
were used to define `algebro-geometric' Feynman rules in~\cite{feynman}.
The results in this paper provide further evidence that the polynomial Feynman 
rule defined in terms of the Chern-Schwartz-MacPherson class of a graph 
hypersurface reflects closely the combinatorics of the corresponding graph.

The key to the proof of the main result is a more general formula for the
Chern-Schwartz-MacPherson class of a transversal intersection 
(see \S\ref{generals}), which may be of independent interest.

We also describe a more geometric approach, using the apparatus of
`Verdier specialization'.
\end{abstract}

\maketitle


\section{Introduction}\label{intro}

\subsection{}
{\em Graph hypersurfaces\/} are hypersurfaces of projective space associated with the
parametric formulation of Feynman integrals in scalar quantum field theories. The
study of their geometry was prompted by certain conjectures concerning
the appearance of multiple zeta values in the results of computation of Feynman
amplitudes, and has been the object of intense investigation (see e.g., \cite{MR1435933},
\cite{MR1774975}, \cite{MR1950482}, \cite{MR2238909}, \cite{MR2295619}, 
\cite{MR2604634}, \cite{brownyeats}, \cite{doryn}, \cite{brownschnetz}, and many others).
In this paper we study Chern classes of graph hypersurfaces, from the point of view
of deletion-contraction and multiple-edge formulas.

\subsection{}
`Algebro-geometric Feynman rules' are invariants of graphs which only depend on 
the isomorphism class of the corresponding hypersurfaces, and have controlled
behavior with respect to unions. (This definition of course captures only a very small 
portion of the quantum field theory Feynman rules; it would be very interesting to
have examples of algebro-geometric Feynman rules mirroring more faithfully
their physical counterparts.) Matilde Marcolli and the author note in \cite{feynman}
that the classes of graph hypersurfaces in the Grothendieck ring of varieties
satisfy this basic requirement; Grothendieck classes of graph hypersurfaces have
been studied rather thoroughly, given their relevance to the conjectures mentioned
above. In the same paper we produced a different example of algebro-geometric 
Feynman rules, with values in $\Zbb[t]$, based on the {\em Chern classes\/} of 
graph hypersurfaces. The definition of this polynomial invariant $C_\Gamma(t)$ 
will be recalled below;
its main interest lies in the fact that it carries intersection-theoretic information 
about the singularities of graph hypersurfaces. For example, the (push-forward
to projective space of the) Segre class of the singularity subscheme of a
hypersurface may be recovered from the polynomial Feynman rules. The
Milnor number of the hypersurface (in its natural generalization to arbitrary
hypersurfaces as defined by Parusi\'nski, \cite{MR949831}) is but one piece of 
the information carried by $C_\Gamma(t)$.

The fact that the invariant satisfies the basic requirements of algebro-geometric 
Feynman rules is proved in \cite{feynman}, Theorem~3.6, and is substantially 
less straightforward than the corresponding fact for Grothendieck classes. 

Grothendieck classes of graph hypersurfaces also satisfy a `deletion-contraction'
relation: this fact has been pointed out by several authors, see
e.g.,~\cite{MR1774975}, \cite{MR2238909}, \cite{delecon}.
The purpose of this article is to examine deletion-contraction
relations for Chern classes of graph hypersurfaces, in terms of the polynomial
Feynman rules mentioned above. As in \cite{feynman},
it is natural to expect the situation for Chern classes to be substantially subtler
than for classes in the Grothendieck ring, and in fact our first guess concerning
a double-edge formula for Chern classes, based on somewhat extensive
evidence computed for small graphs, turns out to be {\em incorrect\/} as stated 
in Conjecture~6.1 in~\cite{delecon}. Nevertheless, we will be able to show here 
that these invariants do satisfy the expected general structure underlying 
multiple-edge formulas examined in \cite{delecon}.

\subsection{}
The {\em graph hypersurface\/} associated with a graph $\Gamma$ is the zero-set
of the polynomial
\[
\Psi_\Gamma=\sum_{T} \prod_{e\not\in T} t_e
\]
where $T$ ranges over the maximal spanning forests of $\Gamma$. This is a
homogeneous polynomial in variables $t_e$ corresponding to the edges $e$ of 
$\Gamma$, and its zero set may be viewed in $\Pbb^{n-1}$ or $\Abb^n$,
depending on the context.

Graph hypersurfaces are singular in all but the simplest cases, and in this article
(as in \cite{feynman}) we employ the theory of {\em Chern-Schwartz-MacPherson\/}
(CSM) classes. CSM classes are defined for arbitrary varieties, and agree with
the ordinary (total homology) Chern class of the tangent bundle when evaluated
on nonsingular varieties. The reader may refer to \S2.2-3 of \cite{MR2504753} 
for a quick summary of this theory, which has a long and well-documented history.
CSM classes can be viewed as a generalization
of the topological Euler characteristic: indeed, the degree of the CSM class of a
variety {\em is\/} its Euler characteristic, and to some extent CSM classes maintain
the same additive and multiplicative behavior of the Euler characteristic. In this
respect they are similar in flavor to the Grothendieck class. They also offer a
direct measure of `how singular' a variety is, by comparison with other characteristic
classes of singular varieties, see e.g.~\S2.2 in \cite{MR2504753} and references therein.

CSM classes are in fact defined for {\em constructible functions\/} on a variety
(\cite{MR0361141}), and what we call the CSM class of $X$ is the class $\csm(\one_X)$
of the constant function~$\one_X$. As our objects of study are hypersurfaces of
projective space, we view CSM classes as elements of the Chow group of
projective space, i.e., as polynomials in the hyperplane class. The polynomial
Feynman rules mentioned above are closely related to the CSM class of the
{\em complement\/} of a graph hypersurface $X_\Gamma\subseteq \Pbb^{n-1}$:
if a graph $\Gamma$ with $n$ edges is not a forest, then the polynomial Feynman
rules $C_\Gamma(t)$ are determined by the relation
\[
\csm(\one_{\Pbb^{n-1}\smallsetminus X_\Gamma})=\big(H^n C_\Gamma(1/H)\big)
\cap [\Pbb^{n-1}]\quad,
\]
where $H$ denotes the hyperplane class; see Prop.~3.7 in \cite{feynman}.

\subsection{}
In graph theory, {\em deletion-contraction\/} formulas express invariants for
a graph $\Gamma$ directly in terms of invariants for the `deletion' graph 
$\Gamma\smallsetminus e$ obtained by removing an edge $e$, and the
`contraction' graph $\Gamma/e$ obtained by contracting the same edge.
{\em Tutte-Grothendieck\/} invariants are the most general invariants with
controlled behavior with respect to deletion-contraction; an impressive
list of important graph invariants are of this kind, ranging from chromatic
polynomials to partition functions for Potts models. In fact, these invariants
may be viewed as `Feynman rules' in a sense closely related to the one
adopted in \cite{feynman}, see Prop.~2.2 in~\cite{delecon}.

In \cite{delecon} we show that the invariant arising from the Grothendieck
class satisfies a weak form of deletion-contraction (which involves 
`non-combinatorial' terms); and that enough of this structure is preserved to
trigger combinatorial `multiple-edge' formulas. More precisely, let 
$\Ubb(\Gamma)=[\Abb^n-\hat X_\Gamma]$ denote the Grothendieck class
of the complement of the affine graph hypersurface, and denote by $\Gamma_{2e}$
the graph obtained by doubling the edge $e$ in $\Gamma$. If $e$ is neither
a bridge nor a looping edge, then (\cite{delecon}, Proposition~5.2) 
\begin{equation*}
\tag{*}
\Ubb(\Gamma_{2e})=(\Tbb-1)\,\Ubb(\Gamma)+\Tbb\, \Ubb(\Gamma\smallsetminus e)
+(\Tbb+1)\, \Ubb(\Gamma/e)\quad,
\end{equation*}
where $\Tbb$ is the class of $\Abb^1\smallsetminus \Abb^0$. Note that (*) 
holds without further requirements on~$e$.
(Simpler formulas hold in case $e$ is a bridge or a looping edge.)

\subsection{}
As we will show in this paper, the situation concerning the polynomial 
invariant~$C_\Gamma(t)$ recalled above is somewhat different. As in the case of the 
Grothendieck class, this invariant does not satisfy on the nose a deletion-contraction 
relation (this was already observed in \cite{delecon}, Prop.~3.2). Unlike in the case of 
the invariant~$\Ubb(\Gamma)$, however, even a weaker non-combinatorial form of 
deletion-contraction
only holds under special hypotheses on the edge $e$. The main result of this article
is the determination of conditions on a pair $(\Gamma, e)$ such that a sufficiently
strong deletion-contraction relation (and corresponding consequences, such as
multiple-edge formulas) holds for the edge $e$ of~$\Gamma$. These conditions
are presented in \S\ref{condis}; somewhat surprisingly, they appear to hold for
`many' graphs. One of them can be formulated as follows. Assume that $e$ is
neither a bridge nor a looping edge of $\Gamma$. We may consider the polynomial 
$\Psi_{\Gamma\smallsetminus e}$ for the deletion 
$\Gamma\smallsetminus e$. The condition is then  
that $\Psi_\Gamma$ 
{\em belongs to the Jacobian ideal of $\Psi_{\Gamma\smallsetminus e}$.\/}
The smallest counterexample to this requirement appears to be the graph
\begin{center}
\includegraphics[scale=.5]{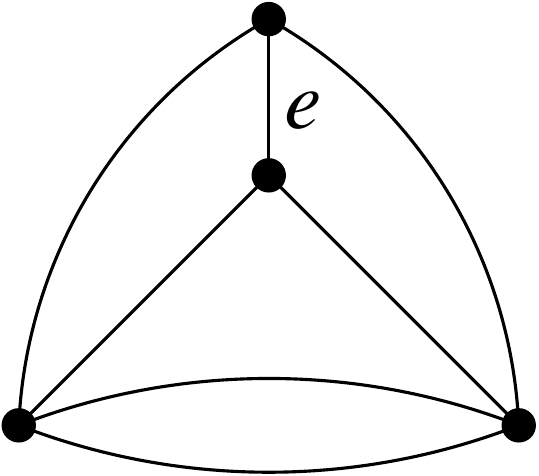}
\end{center}
with respect to the vertical edge. We find it surprising
that this condition is satisfied as often as it is.

The second condition is more technical. See \S\ref{condis} for a discussion of
both conditions.

Once a (weak) deletion-contraction formula is available, one should expect
combinatorial multiple-edge formulas to hold. And indeed, we will prove the
following analogue of (*) for the Chern class Feynman rules:

\begin{theorem}\label{double}
If the conditions on $(\Gamma,e)$ mentioned above are satisfied, then
\[
C_{\Gamma_{2e}}(t)=(2t-1)\, C_\Gamma(t)
-t(t-1)\, C_{\Gamma\smallsetminus e}(t)+C_{\Gamma/e}(t)
\quad.
\]
\end{theorem}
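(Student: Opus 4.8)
The plan is to deduce the double-edge formula from an underlying weak deletion-contraction relation, mirroring the strategy used for Grothendieck classes in deriving (*). The first step is to establish (or invoke from \S\ref{condis}) the weak deletion-contraction relation for $C_\Gamma(t)$ under the two stated hypotheses: namely, a formula expressing $C_{\Gamma}(t)$ in terms of $C_{\Gamma\smallsetminus e}(t)$ and $C_{\Gamma/e}(t)$ together with a correction term arising from the failure of exact additivity. The condition that $\Psi_\Gamma$ belongs to the Jacobian ideal of $\Psi_{\Gamma\smallsetminus e}$ should be precisely what is needed to control the CSM class of the relevant intersection, via the general transversality formula for $\csm$ of a transversal intersection advertised in the abstract.

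The second and central step is to apply such a relation \emph{recursively} to the graph $\Gamma_{2e}$. The key observation is that doubling the edge $e$ and then deleting or contracting one of the two parallel copies relates $\Gamma_{2e}$ back to $\Gamma$, $\Gamma\smallsetminus e$, and $\Gamma/e$. Concretely, I expect that deleting one of the doubled edges from $\Gamma_{2e}$ recovers $\Gamma$, while contracting one of them produces a graph whose hypersurface is controlled by $\Gamma/e$ (the other parallel edge becoming a looping edge, or being absorbed). The plan is therefore to write the weak deletion-contraction relation for the pair $(\Gamma_{2e}, e')$, where $e'$ is one of the parallel edges, and simplify the resulting deletion and contraction terms into expressions in the three target graphs. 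A careful bookkeeping of how $\Psi_{\Gamma_{2e}}$ factors or specializes in the variable $t_{e'}$ corresponding to the doubled edge will be needed; presumably $\Psi_{\Gamma_{2e}}$ is linear in $t_{e'}$ after the doubling, so that the geometry decomposes cleanly.

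The heart of the computation is tracking the coefficients $(2t-1)$, $-t(t-1)$, and $1$. These should emerge from combining the base weak deletion-contraction relation with the specific way the projective degree shifts when passing from $\Pbb^{n-1}$ to $\Pbb^n$ upon adding an edge. The factors of $t$ encode the effect of the extra hyperplane class $H$ in the ambient projective space, exactly as in the Grothendieck-class formula (*) the class $\Tbb=[\Abb^1\smallsetminus\Abb^0]$ appears; here $t$ plays the role analogous to $\Tbb$, and the quadratic term $t(t-1)$ reflects that the polynomial Feynman rule, unlike the Grothendieck class, records finer intersection data and hence picks up a second-order contribution. I would verify the coefficients by comparing low-degree terms against the known normalization $\csm(\one_{\Pbb^{n-1}\smallsetminus X_\Gamma})=(H^n C_\Gamma(1/H))\cap[\Pbb^{n-1}]$.

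The main obstacle I anticipate is the contraction term. Contraction sets $t_e=0$, which geometrically corresponds to a hyperplane section, and the interaction between this hyperplane section and the CSM class is governed by the general transversality formula — but transversality can fail precisely along the singular locus of the graph hypersurface. The two technical conditions on $(\Gamma,e)$ are, I expect, exactly the hypotheses that guarantee the requisite transversality (the Jacobian-ideal condition) and some additional genericity ensuring the specialization is flat and well-behaved. The delicate point will be to confirm that these conditions, which hold for the pair $(\Gamma,e)$, continue to hold for the derived pair $(\Gamma_{2e},e')$ so that the weak relation may legitimately be applied at the level of the doubled graph; establishing this stability of the hypotheses under edge-doubling is where I would expect the real work — and the possibility of a subtle gap — to lie.
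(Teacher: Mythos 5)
Your overall strategy is the paper's: apply the (weak) deletion--contraction relation of Theorem~\ref{delconthm} twice, once to $(\Gamma,e)$ and once to $(\Gamma_{2e},e')$ with $e'$ the new parallel edge, using $\Gamma_{2e}\smallsetminus e'=\Gamma$. The "delicate point" you flag at the end --- whether the hypotheses persist for $(\Gamma_{2e},e')$ --- is in fact not delicate at all: $e'$ has a parallel edge in $\Gamma_{2e}$ by construction, so Lemma~\ref{mainlemma} applies and conditions~I and~II hold automatically. That is not where the work lies.

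The genuine gap is elsewhere, and it is the heart of the proof. First, you misremember the shape of the deletion--contraction relation: it is
\[
C_{X_\Gamma}(t)=C_{X_{\Gamma\smallsetminus e}\cap X_{\Gamma/e}}(t)
+(t-1)\, C_{X_{\Gamma\smallsetminus e}}(t)\,,
\]
so the "correction term" is not a small error but the class of the intersection $X_{\Gamma\smallsetminus e}\cap X_{\Gamma/e}$, and $C_{\Gamma/e}$ does not appear at all. Applying this to $\Gamma_{2e}$ produces the analogous non-combinatorial term $C_{X_\Gamma\cap X_{\Gamma_{2e}/e'}}(t)$ in $\Pbb^{n-1}$, and the entire content of the theorem is the comparison of these two intersection terms. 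Your proposal replaces this comparison with heuristics about "projective degree shifts" and a "second-order contribution," which cannot produce the coefficients $(2t-1)$, $-t(t-1)$, $1$. The missing argument is concrete: since $\Psi_{\Gamma_{2e}/e'}=t_e\Psi_{\Gamma/e}$ and $\Psi_\Gamma=t_e\Psi_{\Gamma\smallsetminus e}+\Psi_{\Gamma/e}$, one has the set-theoretic decomposition
\[
V(\Psi_\Gamma,\, t_e\Psi_{\Gamma/e})
=V(\Psi_{\Gamma/e},\, t_e)\,\cup\, V(\Psi_{\Gamma\smallsetminus e},\, \Psi_{\Gamma/e})\,,
\]
exhibiting $X_\Gamma\cap X_{\Gamma_{2e}/e'}$ as a copy of $X_{\Gamma/e}$ union a cone over $X_{\Gamma\smallsetminus e}\cap X_{\Gamma/e}$, meeting in a copy of $X_{\Gamma\smallsetminus e}\cap X_{\Gamma/e}$. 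Inclusion--exclusion for CSM classes plus the cone formula ($C$ of a cone is $(t+1)$ times $C$ of the base) then give
\[
C_{X_\Gamma\cap X_{\Gamma_{2e}/e'}}(t)=C_{X_{\Gamma/e}}(t)+t\, C_{X_{\Gamma\smallsetminus e}\cap X_{\Gamma/e}}(t)\,,
\]
after which the first deletion--contraction relation eliminates $C_{X_{\Gamma\smallsetminus e}\cap X_{\Gamma/e}}(t)$ and the stated coefficients fall out. Without this decomposition your outline does not close.
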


This is the formula that was proposed in \cite{delecon}, Conjecture~6.1, on the 
basis of many examples computed explicitly in \cite{stryker}. However, the
additional conditions on $(\Gamma,e)$ had not been identified at the
time; the formula proposed in {\em loc.~cit.\/} for the class of a triangle with
doubled edges is incorrect, and will be corrected here in Example~\ref{doubletriangex}.

\subsection{}
As mentioned above, we do not have a sharp combinatorial characterization
on $(\Gamma,e)$ ensuring that the technical hypotheses needed for 
Theorem~\ref{double} are satisfied. However, there is one important case
in which we are able to prove that these conditions are indeed satisfied:
the conditions hold if $e$ is itself a multiple edge, i.e., if the endpoints of
$e$ are adjacent in $\Gamma\smallsetminus e$. Thus, 
Theorem~\ref{double} implies that the polynomial Feynman rules satisfy
essentially the same recursive structure for multiple-edge formulas that
is studied in general in \cite{delecon}, \S6. If $e$ is neither a bridge nor a 
looping edge of~$\Gamma$, then
\[
C_{\Gamma^{(m+3)}}(t)=
(3t-1)\, C_{\Gamma^{(m+2)}}(t) - (3t^2-2t)\, C_{\Gamma^{(m+1)}}(t) 
+ (t^3-t^2)\, C_{\Gamma^{(m)}}(t)\quad.
\]
In a sense, this recursion is `nicer' than the corresponding one for 
Grothendieck classes (see the comments following Lemma~\ref{recurs}).

\subsection{}
This paper is organized as follows. In~\S\ref{condis} we state precisely the technical 
conditions mentioned above, in the case of graph hypersurfaces, providing a few
simple examples to illustrate them.
This is also done in the hope that others may identify sharp combinatorial versions
of these conditions. We prove (Lemma~\ref{mainlemma}) that the conditions 
hold for $(\Gamma, e)$ if $e$ has parallel edges in $\Gamma$, and describe one
class of examples in which the conditions do not (both) hold. 
In~\S\ref{generals} we discuss a formula 
for the CSM class of a transversal intersection, needed for the application to
graph hypersurfaces presented here; this section can be read independently
of the rest of the paper. In~\S\ref{particulars} we apply these formulas
to the case of graph hypersurfaces, obtaining the deletion-contraction relation
(Theorem~\ref{delconthm}). In~\S\ref{multedges} we apply this relation to obtain 
multiple-edge formulas as mentioned above (Theorem~\ref{goodform},
Lemma~\ref{recurs}).
In~\S\ref{speci} we describe a different and more `geometric' (but in practice 
less applicable) approach to the main deletion-contraction formula, using 
{\em Verdier's specialization.\/}
\medskip

{\em Acknowledgment.}  
I thank Matilde Marcolli for stimulating my
interest in graph hypersurfaces through our previous joint work, and
for the hospitality at Caltech, where this paper was written. I also
thank Don Zagier for a conversation concerning a technical point
in~\S\ref{multedges}.


\section{Two technical conditions}\label{condis}

We work over an algebraically closed field $k$ of characteristic~zero.

\subsection{}\label{condintro}
As in \S\ref{intro}, $\Gamma$ denotes a finite graph, with $n$ edges; we allow 
looping edges as well as parallel edges. 
We associate with each edge $e$ a 
variable $t_e$, and we consider the {\em graph polynomial\/}
\[
\Psi_\Gamma=\sum_{T} \prod_{e\not\in T} t_e\quad,
\]
where $T$ ranges over the maximal spanning forests of $\Gamma$.
(Note: According to this definition, the polynomial for a graph is the product
of the polynomials for its connected components.)

We denote by $X_\Gamma$ the projective hypersurface defined by $\Psi_\Gamma=0$.
We present in this section two conditions for a pair $(\Gamma,e)$, where $e$ is
an edge of $\Gamma$, encoding some geometric features of $X_\Gamma$.
Finding more transparent, combinatorial versions of these conditions
is an interesting problem.

We will say that an edge $e$ of $\Gamma$ is {\em regular\/} if $e$ is neither a
bridge nor a looping edge, and further $\Gamma\smallsetminus e$ is not a 
forest. We will essentially always assume that $e$ is regular on $\Gamma$;
non-regular edges are easy to treat separately (see e.g., \S\ref{trivca}).

If $e$ is a regular edge of $\Gamma$, then 
\[
\Psi_\Gamma = t_e \Psi_{\Gamma\smallsetminus e} + \Psi_{\Gamma/e}\quad;
\]
this is well-known, and easily checked. As $\Psi_{\Gamma\smallsetminus e}$ is 
not a forest, $\deg \Psi_{\Gamma\smallsetminus e}>0$; in this case a point $p$ of 
$X_\Gamma$ is determined by setting all variables except $t_e$ to $0$. We denote by 
$\Til X_\Gamma$ the blow-up of $X_\Gamma$ at~$p$, and by $E$ the exceptional
divisor of this blow-up. The variety $\Til X_\Gamma$ may be realized as a hypersurface
in the blow-up of $\Pbb^{n-1}$ at~$p$. Denoting by $D$ the exceptional divisor of
this latter blow-up, $E$ is the intersection $D\cap \Til X_\Gamma$. 
Heuristically, the conditions we will present below amount to requiring this intersection
to be sufficiently transversal.

\subsection{}\label{condI}
Assume $e$ is regular. 
Both $X_{\Gamma\smallsetminus e}$
and $X_{\Gamma/e}$ are hypersurfaces of a projective space $\Pbb^{n-2}$
with homogeneous coordinates corresponding to the edges of $\Gamma$
other than~$e$. The first condition on the pair $(\Gamma,e)$
may be expressed as a relation between them:
\begin{equation*}
\tag{Condition I} \Psi_{\Gamma/e}\in (\partial \Psi_{\Gamma\smallsetminus e})\quad.
\end{equation*}
Here, $(\partial \Psi_{\Gamma\smallsetminus e})$ denotes the ideal of partial 
derivatives of $\Psi_{\Gamma\smallsetminus e}$, defining the {\em singularity
subscheme\/} $\partial X_{\Gamma\smallsetminus e}$ of 
$X_{\Gamma\smallsetminus e}$. The condition essentially (that is, up to 
saturating $(\partial \Psi_{\Gamma\smallsetminus e})$) amounts to 
requiring $\partial X_{\Gamma\smallsetminus e}$ to be a subscheme of
$X_{\Gamma/e}$.

It is of course easy to verify whether this condition holds on any given graph,
by employing a symbolic manipulation package such as Macaulay2 (\cite{M2}). 
The following examples illustrate a few cases, showing in particular that the 
condition depends on global features of the graph.

\begin{example}\label{cIexample}
For the graph
\begin{center}
\includegraphics[scale=.4]{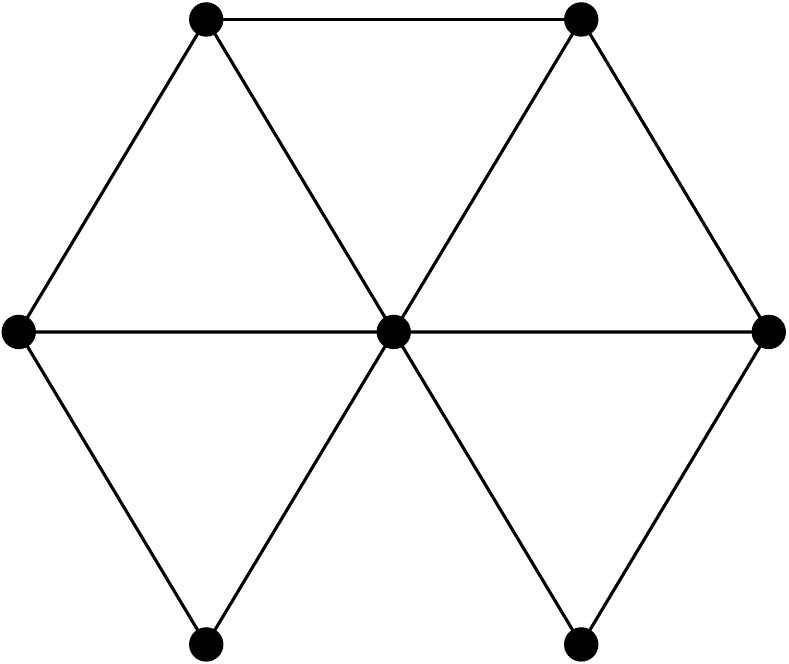}
\end{center}
condition~I is satisfied with respect to all edges.
For the wheel
\begin{center}
\includegraphics[scale=.4]{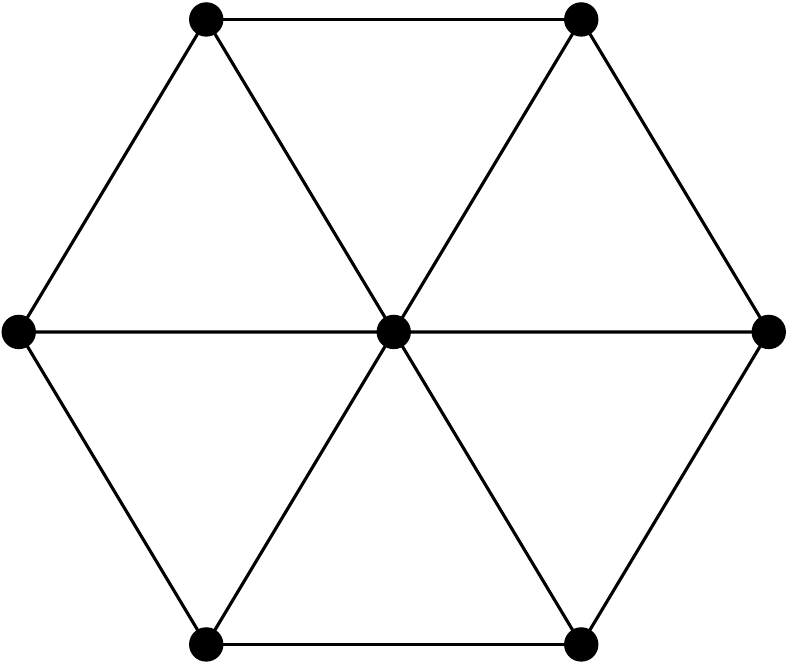}
\end{center}
condition~I is satisfied with respect to the spokes, and it is not satisfied with respect 
to the rim edges.

Condition~I is satisfied with respect to all edges for the graph
\begin{center}
\includegraphics[scale=.5]{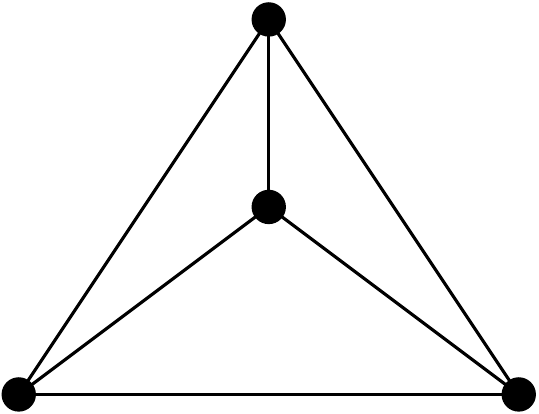}
\end{center}
and with respect to all edges {\em except $e$\/} for the graph
\begin{center}
\includegraphics[scale=.5]{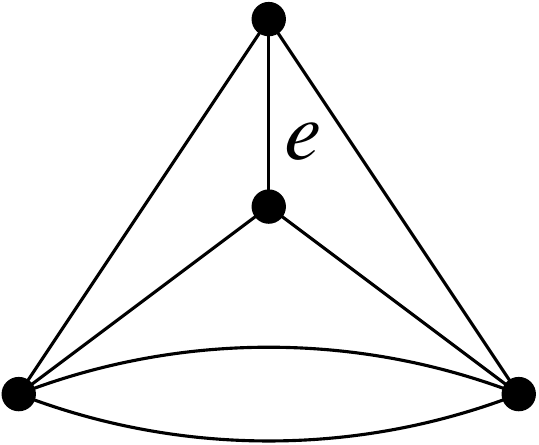}
\end{center}
This is the `smallest' example not satisfying condition~I.
\qede
\end{example}

\subsection{}\label{condII}
The second condition we will consider is more technical than condition~I. 
Let $e$ be a regular edge on $\Gamma$, and consider the blow-up introduced
in \S\ref{condintro}.
For any point $q$ of $E\cap \partial \Til X_\Gamma$, let $I$ be the ideal of 
$\partial \Til X$ at~$q$, and denote by $u$ an equation for $E$ at $q$.
The second condition we must consider on the pair $(\Gamma,e)$~is
\begin{equation*}
\tag{Condition II} \text{For all $q\in E\cap \partial \Til X_\Gamma$, $u$ is a 
non-zero-divisor~modulo $I^j$ for $j\gg 0$}\quad.
\end{equation*}

Again, checking this condition on any given graph is possible with a tool 
such as Macaulay2, although computing power will limit the size of
graphs that can be analyzed in practice. (Note that, by Artin-Rees, only finitely 
many $j$ need be checked.) 

\begin{example}\label{cIIexample}
Condition~II is verified for the graph
\begin{center}
\includegraphics[scale=.5]{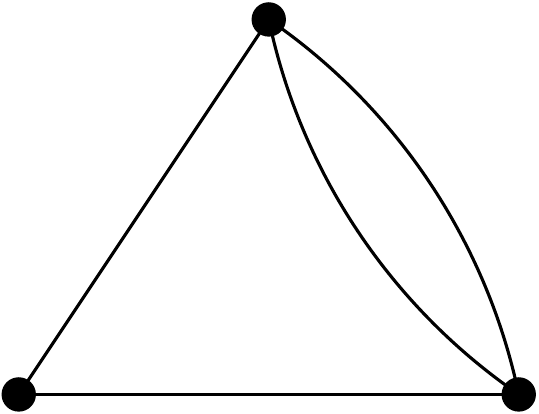}
\end{center}
with respect to all edges, while it does {\em not\/} hold for
\begin{center}
\includegraphics[scale=.5]{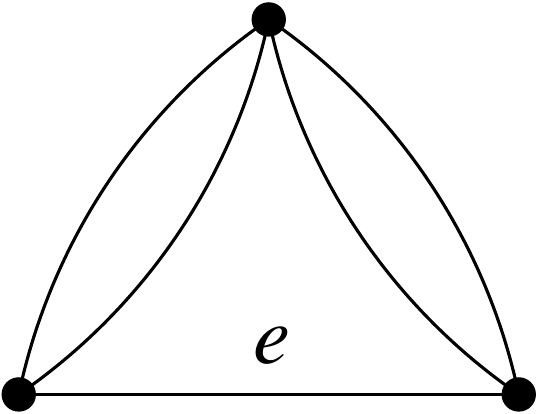}
\end{center}
with respect to $e$. (Both assertions may be verified with Macaulay2; also, see
\S\ref{disjoinable} for a generalization.)
\qede
\end{example}

\subsection{}
We would be interested in purely combinatorial interpretations in terms of $\Gamma$ 
and $e$ of the conditions presented in~\S\ref{condI} and~\S\ref{condII}; it is not even
clear to us that such sharp characterizations exist. However, we can provide one
combinatorial situation in which both conditions are satisfied, and this situation is
at the root of the application to multiple-edge formulas in \S\ref{multedges}.

\begin{lemma}\label{mainlemma}
Let $\Gamma$ be a graph, and let $e$ be a regular edge that has parallel edges 
in~$\Gamma$. Then both conditions~I and~II are verified for $(\Gamma, e)$.
\end{lemma}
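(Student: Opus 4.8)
The plan is to exploit the extra structure that a parallel edge imposes on $\Psi_\Gamma$, reducing both conditions to essentially formal consequences of that structure. Fix a parallel edge $e'$, so that $e$ and $e'$ join the same two vertices. Since every monomial of a graph polynomial is squarefree, $\Psi_\Gamma$ is separately of degree $\le 1$ in $t_e$ and in $t_{e'}$; expanding and matching with the deletion-contraction relation $\Psi_\Gamma=t_e\Psi_{\Gamma\smallsetminus e}+\Psi_{\Gamma/e}$, I would first record the bilinear normal form
\[
\Psi_\Gamma=A\,t_e t_{e'}+\Phi\,(t_e+t_{e'}),\qquad A:=\Psi_{\Gamma\smallsetminus\{e,e'\}},\quad \Phi:=\Psi_{(\Gamma/e)\smallsetminus e'},
\]
where $A$ and $\Phi$ involve none of $t_e,t_{e'}$. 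In particular $\Psi_{\Gamma\smallsetminus e}=t_{e'}A+\Phi$ and $\Psi_{\Gamma/e}=t_{e'}\Phi$ (the edge $e'$ becomes a looping edge of $\Gamma/e$, forcing the factor $t_{e'}$). Writing $d=\deg\Psi_\Gamma$, homogeneity gives $\deg\Psi_{\Gamma\smallsetminus e}=d-1$, hence $\deg\Phi=d-1$ and $\deg A=d-2$. These identities, which are where regularity and the existence of $e'$ enter, carry all the combinatorics I need.

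Condition~I is then immediate. Because $\Psi_{\Gamma\smallsetminus e}$ has degree $\le 1$ in $t_{e'}$, its Taylor expansion in that variable reads $\Psi_{\Gamma\smallsetminus e}=t_{e'}\,\partial_{t_{e'}}\Psi_{\Gamma\smallsetminus e}+\Phi$, so that $\Phi=\Psi_{\Gamma\smallsetminus e}-t_{e'}\,\partial_{t_{e'}}\Psi_{\Gamma\smallsetminus e}$. The second term lies in $(\partial\Psi_{\Gamma\smallsetminus e})$ tautologically, and the first lies there by the Euler relation (valid since $\deg\Psi_{\Gamma\smallsetminus e}\ge 1$ and $\mathrm{char}\,k=0$). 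Hence $\Phi\in(\partial\Psi_{\Gamma\smallsetminus e})$, and therefore $\Psi_{\Gamma/e}=t_{e'}\Phi\in(\partial\Psi_{\Gamma\smallsetminus e})$, which is exactly Condition~I.

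For Condition~II I would compute the proper transform explicitly. In the affine chart $t_e=1$ the point $p$ is the origin and $X_\Gamma=\{\Psi_{\Gamma\smallsetminus e}+\Psi_{\Gamma/e}=0\}$, so its tangent cone at $p$ is $\Psi_{\Gamma\smallsetminus e}$ and $E=X_{\Gamma\smallsetminus e}$. Blowing up the origin and passing to the chart that pivots on the $t_{e'}$-direction (coordinates $x=t_{e'}$ and $z$, with the remaining edge-variables equal to $x z_i$), the degree count above yields $f=x^{\,d-1}\bigl(A(z)+(1+x)\Phi(z)\bigr)$, so
\[
\Til f=A(z)+(1+x)\,\Phi(z),\qquad u=x .
\]
The decisive features are that $1+x$ is a unit near $D=\{x=0\}$ and that $\partial_x\Til f=\Phi$ is one of the Jacobian generators; thus on $\{\Phi\ne 0\}$ one may solve $x=-1-A/\Phi$, showing $\Til X_\Gamma$ is smooth there and that $\partial\Til X_\Gamma\subseteq\{\Phi=0,\ A=0\}$. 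Consequently the points $q\in E\cap\partial\Til X_\Gamma$ are singular points of $E=X_{\Gamma\smallsetminus e}$ that moreover lie on $\{\Phi=0\}$.

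I expect the genuinely hard step to be the non-zero-divisor statement for the high powers $I^j$, rather than the geometry just described. First I would invoke Artin--Rees to reduce to finitely many $j$. Then, with $I=(A,\Phi,\{A_{z_i}+(1+x)\Phi_{z_i}\}_i)$ at such a $q$, the task is to show that $u=x$ lies in no associated prime of $I^j$; equivalently, that $\partial\Til X_\Gamma$ meets $D$ properly and acquires no component or embedded prime inside $D$. Here the normal form is essential: because $1+x$ is a unit and $\partial_x\Til f=\Phi\in I$, the coordinate $x$ is a genuine parameter transverse to $\partial\Til X_\Gamma$, so I expect $E\cap\partial\Til X_\Gamma$ to have strictly smaller dimension and no associated prime along $D$ to contain $x$. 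The remaining blow-up charts---covering the points of $E$ in the hyperplane $\{t_{e'}=0\}$ of $D$---produce the analogous normal form $\Til f=s_0\bar A+(1+v s_0)\bar\Phi$ with $u=v$, and are treated in the same way. Assembling these local statements over all $q\in E\cap\partial\Til X_\Gamma$ verifies Condition~II, and together with Condition~I this proves the lemma.
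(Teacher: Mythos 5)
Your proof of Condition~I is correct and is essentially the paper's argument in different notation (the paper writes $\Psi_{\Gamma'}$, $\Psi_{\Gamma''}$ for your $A$, $\Phi$): one observes $\Phi=\Psi_{\Gamma\smallsetminus e}-t_{e'}\,\partial_{t_{e'}}\Psi_{\Gamma\smallsetminus e}$ and invokes the Euler relation. One small omission: if $e'$ is a bridge of $\Gamma\smallsetminus e$, your bilinear normal form with $A=\Psi_{\Gamma\smallsetminus\{e,e'\}}$ is false (no spanning forest omits both $e$ and $e'$, so the coefficient of $t_et_{e'}$ in $\Psi_\Gamma$ is $0$, while $\Psi_{\Gamma\smallsetminus\{e,e'\}}\neq 0$ has degree $d-1$); one has instead $\Psi_\Gamma=(t_e+t_{e'})\Psi_{\Gamma\smallsetminus\{e,e'\}}$. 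The paper treats this case separately; it is easy, but it does need to be said.

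The genuine gap is in Condition~II, precisely at the step you yourself flag as the hard one. From $I=(A,\Phi,\{A_{z_i}+(1+x)\Phi_{z_i}\}_i)$ you cannot conclude that $x$ avoids every associated prime of $I^j$ (embedded primes included, for all large $j$) merely because $\partial_x\Til f=\Phi\in I$ and $1+x$ is a unit: the generators $A_{z_i}+(1+x)\Phi_{z_i}$ do involve $x$, the phrase ``$x$ is a genuine parameter transverse to $\partial\Til X_\Gamma$'' is exactly the statement to be proved, and Condition~II does genuinely fail for some graph hypersurfaces (see \S\ref{disjoinable}), so some real input is required. The paper supplies that input with one move your argument is missing: since Condition~II only depends on $\Til X_\Gamma$ over $p$, one may perform the analytic change of coordinates $\tau=t_{e'}/(1+t_{e'})$ at $p$, which turns the local equation $t_{e'}A+(1+t_{e'})\Phi$ into the \emph{homogeneous} polynomial $\tau A+\Phi$. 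Thus $X_\Gamma$ is locally a cone with vertex $p$; in every standard chart of the blow-up the equation of $\Til X_\Gamma$, and hence a generating set of $I$, is independent of the exceptional coordinate $u$, and $u$ is then a non-zero-divisor modulo $I^j$ for \emph{every} $j$. Your normal form already contains everything needed to make this substitution, but without it your argument does not close.
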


\begin{proof}
Let $f$ be an edge parallel to $e$. We first assume that $f$ is not a bridge in 
$\Gamma\smallsetminus e$. Then
\[
\Psi_{\Gamma}=t_e\Psi_{\Gamma\smallsetminus e} +\Psi_{\Gamma/e}
=t_e (t_f \Psi_{\Gamma'}+\Psi_{\Gamma''})+ t_f \Psi_{\Gamma''}
\]
where $\Gamma'=(\Gamma\smallsetminus e)\smallsetminus f$ and $\Gamma''
=(\Gamma/e)/f$.
\begin{center}
\includegraphics[scale=.5]{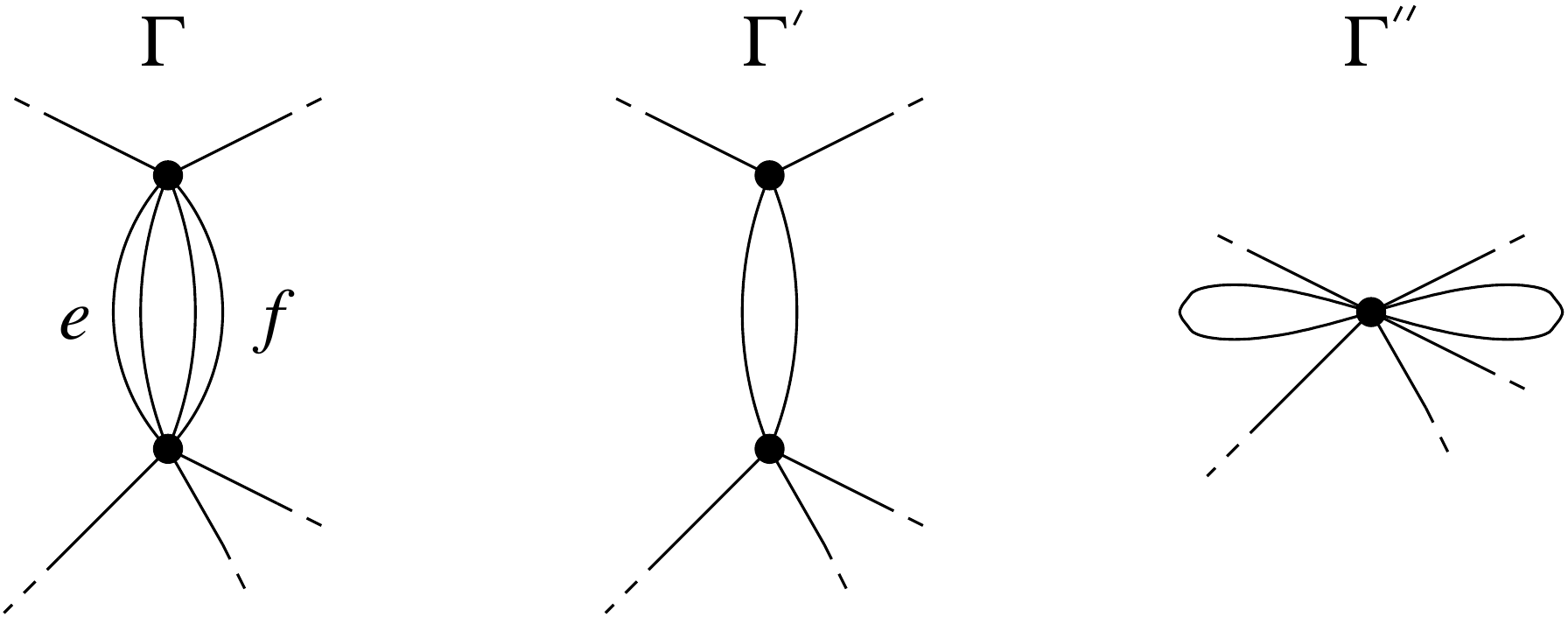}
\end{center}

{\em Condition~I.\/}
Among the partials of $\Psi_{\Gamma\smallsetminus e}$ is $\Psi_{\Gamma'}
=\frac{\partial \Psi_{\Gamma\smallsetminus e}}{\partial t_f}$. Since 
$\Psi_{\Gamma\smallsetminus e}$ is homogeneous (and we are in characteristic zero), 
it is in the ideal of partials
$(\partial \Psi_{\Gamma\smallsetminus e})$. It follows that so is $\Psi_{\Gamma''}$,
and as a consequence $\Psi_{\Gamma/e}=t_f\Psi_{\Gamma''}
\in (\partial \Psi_{\Gamma\smallsetminus e})$
as needed.

{\em Condition~II.\/}
Since the condition only depends on the part of $\Til X_\Gamma$ over $p$, it is 
unaffected by analytic changes of coordinates at $p$. First, we can center an affine
chart at $p$ by setting $t_e=1$, and the equation of $X_\Gamma$ in this chart is
\[
t_f \Psi_{\Gamma'}+ (1+t_f) \Psi_{\Gamma''}=0\quad.
\]
Next, we can set $t_f=\frac \tau{1-\tau}$, i.e., $\tau=\frac{t_f}{1+t_f}$; this does not
affect the geometry of $X_\Gamma$ near~$p$ (where $t_f=0$). In coordinates $\tau,
t_{e_i}$, the equation for $X_\Gamma$ is
\[
\tau \Psi_{\Gamma'} + \Psi_{\Gamma''}=0\quad.
\]
This equation is {\em homogeneous,\/} so $X_\Gamma$ 
is a cone with vertex at $p$ in these coordinates. 
It is then clear that condition~II holds: the equations
of $\Til X_\Gamma$ in the standard charts do not depend on the variable $u$
defining the exceptional divisor, so at each $q\in E\cap \partial \Til X_\Gamma$
the ideal $I$ has a set of generators independent of $u$, and it follows that
$u$ is a non-zero-divisor modulo $I^j$ for all $j$. 

This concludes the proof in the case in which $f$ is not a bridge in 
$\Gamma\smallsetminus e$. If $f$ is a bridge in $\Gamma\smallsetminus e$, 
then $\Psi_{\Gamma'}=\Psi_{\Gamma''}$, and one verifies easily that
\[
\Psi_{\Gamma} = (t_e+t_f) \Psi_{\Gamma'}\quad.
\]
Since $\Psi_{\Gamma\smallsetminus e}=\Psi_{\Gamma'}$ in this case,
it is immediate that $\Psi_{\Gamma}\in (\partial \Psi_{\Gamma\smallsetminus e})$.
Further, $X_\Gamma$ has equation
\[
\Psi_{\Gamma'}=0
\]
in the affine chart $t_e=1$, and near $p$. Again this is a cone with vertex at $p$,
so condition~II holds by the same argument used above.
\end{proof}

\subsection{}\label{transv}
We now discuss more in detail the geometric meaning of the two conditions presented
above. This will also clarify the sense in which the two conditions may be interpreted
as transversality statements.

\begin{claim}\label{condIex}
If Condition~I is satisfied, then $\partial E = E\cap \partial \Til X_\Gamma$.
\end{claim}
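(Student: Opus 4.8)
The plan is to reduce the claimed scheme-theoretic equality to an explicit comparison of Jacobian ideals in a single chart of the blow-up, and to recognize Condition~I as precisely the statement that forces the two ideals to agree. First I would center an affine chart at $p$ by setting $t_e=1$, so that $X_\Gamma$ has local equation $\Psi_{\Gamma\smallsetminus e}+\Psi_{\Gamma/e}$ in the coordinates $t_{e_i}$ ($e_i\neq e$); here $\Psi_{\Gamma\smallsetminus e}$ is the lowest-order part (homogeneous of degree $d=\deg\Psi_{\Gamma\smallsetminus e}>0$) and $\Psi_{\Gamma/e}$ is homogeneous of degree $d+1$. Passing to a standard chart of the blow-up of this $\Abb^{n-1}$ at the origin, with $u$ a local equation for $D$ and $v_i$ affine coordinates on $D$, the total transform factors as $u^d\big(\Psi_{\Gamma\smallsetminus e}(v)+u\,\Psi_{\Gamma/e}(v)\big)$, so $\Til X_\Gamma$ has local equation
\[
\Til F=\Psi_{\Gamma\smallsetminus e}(v)+u\,\Psi_{\Gamma/e}(v),
\]
and $E=D\cap\Til X_\Gamma=V(u,\Psi_{\Gamma\smallsetminus e}(v))$ is the projectivized tangent cone, i.e.\ a copy of $X_{\Gamma\smallsetminus e}$.

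Next I would write out both sides as ideals in this chart. On one hand, $\partial E$ is cut out by $u$ together with the affine Jacobian ideal of $E$ inside $D$, namely $(u,\,\Psi_{\Gamma\smallsetminus e}(v),\,\partial_{v_i}\Psi_{\Gamma\smallsetminus e}(v))$. On the other hand, $\partial\Til X_\Gamma=V(\Til F,\,\partial_u\Til F,\,\partial_{v_i}\Til F)$, where $\partial_u\Til F=\Psi_{\Gamma/e}(v)$ and $\partial_{v_i}\Til F=\partial_{v_i}\Psi_{\Gamma\smallsetminus e}(v)+u\,\partial_{v_i}\Psi_{\Gamma/e}(v)$; intersecting with $E$, i.e.\ reducing modulo $u$, gives $(u,\,\Psi_{\Gamma\smallsetminus e}(v),\,\partial_{v_i}\Psi_{\Gamma\smallsetminus e}(v),\,\Psi_{\Gamma/e}(v))$. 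Thus the two ideals differ exactly by the single generator $\Psi_{\Gamma/e}(v)$, so the inclusion $E\cap\partial\Til X_\Gamma\subseteq\partial E$ holds with no hypotheses, and the content of the claim is the reverse inclusion, equivalent to $\Psi_{\Gamma/e}(v)\in(\Psi_{\Gamma\smallsetminus e}(v),\,\partial_{v_i}\Psi_{\Gamma\smallsetminus e}(v))$.

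It remains to derive this affine membership from Condition~I, i.e.\ from $\Psi_{\Gamma/e}\in(\partial\Psi_{\Gamma\smallsetminus e})$. I would write $\Psi_{\Gamma/e}$ as a homogeneous combination of the first partials of $\Psi_{\Gamma\smallsetminus e}$ and dehomogenize it in the chart (setting the distinguished homogeneous coordinate of $D$ to $1$). All partials except the one with respect to the distinguished coordinate dehomogenize directly to the affine partials $\partial_{v_i}\Psi_{\Gamma\smallsetminus e}(v)$, while that one remaining partial can be expressed, via Euler's identity (valid in characteristic zero since $d>0$), as a combination of $\Psi_{\Gamma\smallsetminus e}(v)$ and the $\partial_{v_i}\Psi_{\Gamma\smallsetminus e}(v)$. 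Hence the dehomogenized partials generate exactly $(\Psi_{\Gamma\smallsetminus e}(v),\,\partial_{v_i}\Psi_{\Gamma\smallsetminus e}(v))$, and Condition~I delivers the required membership; this yields $\partial E\subseteq E\cap\partial\Til X_\Gamma$ and therefore equality.

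I expect the only real obstacle to be the bookkeeping in this homogeneous-to-affine dictionary: matching the two ideals as \emph{schemes} rather than merely as sets, correctly absorbing the ``missing'' partial through Euler's relation, and checking that the identical computation in each standard chart of the blow-up glues over all of $E$. I would also confirm that including $\Psi_{\Gamma\smallsetminus e}(v)$ among the generators of $\partial E$ --- so that $\partial E$ is genuinely the singularity subscheme of the hypersurface $E\subseteq D$ --- is consistent with the above, so that no spurious discrepancy between the two ideals survives.
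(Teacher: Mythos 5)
Your proposal is correct and follows essentially the same route as the paper: pass to the affine chart at $p$, compute in a standard chart of the blow-up, write out the ideals of $\partial E$ and of $E\cap\partial\Til X_\Gamma$, observe they differ only by the generator $\Psi_{\Gamma/e}$, and invoke Condition~I (dehomogenized via Euler's relation) to absorb it. The only difference is that you spell out the homogeneous-to-affine bookkeeping for the Jacobian ideal more explicitly than the paper does.
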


\begin{proof}
Recall that $\Til X$ denotes the blow-up of $X$ at the point $p$ obtained by 
setting $t_e$ to $1$ and all other coordinates $t_1,\dots, t_{n-1}$ to $0$
(where $n$ is the number of edges of $\Gamma$, and $e$ is assumed to be
a regular edge).
Working in 
the affine chart $\Abb^{n-1}$ centered at $p$, $X$ has equation
\[
\Psi_{\Gamma\smallsetminus e} + \Psi_{\Gamma/e}=0\quad,
\]
where the summands are homogeneous polynomials of degree $d-1$ and $d$
respectively, with $d=b_1(\Gamma)$. We can cover the blow-up of $\Abb^{n-1}$ 
at $p$ with standard coordinate patches; in one of them we have coordinates
$(u_1,\dots,u_{n-2},u)$ so that the blow-up map is given by
\[
\begin{cases}
t_1 &=u\,u_1 \\
&\dots \\
t_{n-2} &=u\,u_{n-2} \\
t_{n-1} &=u\qquad.
\end{cases}
\]
The equation of $\Til X$ in this chart is then
\[
\Psi_{\Gamma\smallsetminus e}(u_1,\dots,u_{n-2},1) + 
u\, \Psi_{\Gamma/e}(u_1,\dots,u_{n-2},1)=0\quad,
\]
and $u=0$ is the equation of the exceptional divisor in this chart; thus,
$E$ has ideal
\[
(u, \Psi_{\Gamma\smallsetminus e}(u_1,\dots,u_{n-2},1))
\]
in this chart. Note that the exceptional divisor $D$ of the blow-up of $\Abb^{n-1}$
is a projective space $\Pbb^{n-2}$. The above computation (together to the
same in the other patches) shows that $E\cong X_{\Gamma\smallsetminus e}$,
a hypersurface in $D\cong \Pbb^{n-2}$.

This computation also shows that $\partial E=\partial X_{\Gamma\smallsetminus e}$
has ideal $(\partial \Psi_{\Gamma\smallsetminus e})$. In the representative patch
chosen above, this is
\begin{equation*}
\tag{$\dagger$}
\left(u,\,
\Psi_{\Gamma\smallsetminus e},\,
\frac{\partial \Psi_{\Gamma\smallsetminus e}}{\partial u_i}
\right)_{i=1,\dots,n-2}\quad.
\end{equation*}
On the other hand, in the same patch, $\partial \Til X_\Gamma$ has ideal
\[
\left(
\Psi_{\Gamma\smallsetminus e} + 
u\, \Psi_{\Gamma/e},\,
\Psi_{\Gamma/e},\,
\frac{\partial \Psi_{\Gamma\smallsetminus e}}{\partial u_i}
+ u\, \frac{\partial \Psi_{\Gamma/ e}}{\partial u_i}
\right)_{i=1,\dots,n-2}
\]
and hence $E\cap \partial \Til X_\Gamma$ has ideal
\begin{equation*}
\tag{$\ddagger$}
\left(
u,\,
\Psi_{\Gamma\smallsetminus e},\,
\Psi_{\Gamma/e},\,
\frac{\partial \Psi_{\Gamma\smallsetminus e}}{\partial u_i}
\right)_{i=1,\dots,n-2}\quad.
\end{equation*}
Comparing ($\dagger$) and ($\ddagger$) (and the analogous ideals in all 
patches covering the blow-up), 
we see that the ideals agree if $\Psi_{\Gamma/e}\in 
(\partial \Psi_{\Gamma\smallsetminus e})$, that is, if Condition~I holds.
This verifies Claim~\ref{condIex}.
\end{proof}

\begin{remark}
The picture we have in mind is that of the nonsingular exceptional divisor 
$D\cong\Pbb^{n-2}$ intersecting $\Til X$ along $E$. According to 
Claim~\ref{condIex}, Condition~I implies that the intersection 
$E=D\cap \Til X_\Gamma$ is only singular along the intersection of $D$ with 
the singularity subscheme of $\Til X_\Gamma$, as would be expected if
$D$ met $\Til X_\Gamma$ transversally.
\qede\end{remark}

In order to interpret Condition~II, we have to introduce the blow-up 
$\mu: \widehat X_\Gamma \to \Til X_\Gamma$
of $\Til X_\Gamma$ along its singularity subscheme $\partial \Til X_\Gamma$.
In this blow-up we may consider two subschemes: the proper transform
$\widehat E$ (isomorphic to the blow-up of $E$ along $E\cap \partial \Til X$)
and the inverse image $\mu^{-1}(E)$.

\begin{claim}\label{condIIex}
If Condition~II holds, then $\widehat E=\mu^{-1}(E)$.
\end{claim}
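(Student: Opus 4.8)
The plan is to reduce the assertion $\widehat E=\mu^{-1}(E)$ to a local, purely algebraic equality of graded ideals in the Rees algebra governing $\mu$, and to recognize Condition~II as exactly that equality. First I would observe that the statement is local on $\Til X_\Gamma$ and that it is automatic away from the locus $E\cap\partial\Til X_\Gamma$: the map $\mu$ restricts to an isomorphism over the complement of the center $\partial\Til X_\Gamma$, so over $E\smallsetminus\partial\Til X_\Gamma$ the proper transform and the inverse image coincide tautologically. Thus it suffices to work in the local ring $R=\cO_{\Til X_\Gamma,q}$ at a point $q\in E\cap\partial\Til X_\Gamma$, where $I$ is the ideal of $\partial\Til X_\Gamma$ and $u$ a local equation for $E$, as in the statement of Condition~II. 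Passing to the Rees algebra $\mathcal R=\bigoplus_{j\ge 0} I^j$, so that $\mu$ is locally $\mathrm{Proj}(\mathcal R)\to\mathrm{Spec}(R)$, I would exhibit both subschemes explicitly: the inverse image $\mu^{-1}(E)$ is cut out by the pullback of $u$, i.e.\ by the graded ideal $\bigoplus_{j\ge 0}u\,I^j$, whereas $\widehat E$, being the blow-up of $E=\mathrm{Spec}(R/(u))$ along the image of $I$, is cut out in $\mathrm{Proj}(\mathcal R)$ by the kernel of $\mathcal R\to\bigoplus_{j\ge 0}\big(I^j+(u)\big)/(u)$, namely the graded ideal $\bigoplus_{j\ge 0}\big(I^j\cap(u)\big)$.

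With both subschemes written as $\mathrm{Proj}$ of explicit graded ideals, the equality $\widehat E=\mu^{-1}(E)$ becomes the statement that these two graded ideals agree in all sufficiently large degrees, since $\mathrm{Proj}$ depends only on the high-degree truncation. The containment $u\,I^j\subseteq I^j\cap(u)$ is clear, so the content is the reverse inclusion for $j\gg 0$. Writing $I^j\cap(u)=u\,(I^j:u)$, I would use that $u$ is a non-zero-divisor in $R$ to cancel $u$ and reduce the desired equality to $(I^j:u)=I^j$ for $j\gg 0$. This last equality is precisely the assertion that $u$ is a non-zero-divisor modulo $I^j$ for $j\gg 0$, i.e.\ Condition~II; the appeal to Artin--Rees already noted in \S\ref{condII} guarantees that a single threshold works and that the finitely many local conditions patch to the global equality of subschemes.

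The genuinely delicate points are bookkeeping rather than conceptual. The first is the identification of the defining ideal of $\widehat E$ with $\bigoplus_j\big(I^j\cap(u)\big)$: this follows from the universal property of blowing up $E$ along $E\cap\partial\Til X_\Gamma$ together with the compatibility of strict transforms with the embedding $\widehat E\hookrightarrow\widehat X_\Gamma$, which I would verify chart by chart to be safe. The second, which I expect to be the main obstacle to a fully rigorous argument, is the verification that $u$ is a non-zero-divisor in $R$: this requires that $E$ contain no associated prime of $\Til X_\Gamma$, equivalently that the proper transform $\Til X_\Gamma$ have no irreducible component lying in the blown-up divisor $D$. This should be recorded explicitly, and is the point at which the reducedness of $\Psi_\Gamma$ (or at least the absence of components of $\Til X_\Gamma$ inside $D$) enters; without it the clean cancellation of $u$ in the previous paragraph is not available.
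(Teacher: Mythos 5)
Your proposal is correct and follows essentially the same route as the paper, which packages the identical computation as the statement that Condition~II forces $\cJ\cap\cI^j=\cJ\cdot\cI^j$ for $j\gg 0$, so that the natural surjection $\cI^j/\cJ\cdot\cI^j\to(\cI^j+\cJ)/\cJ$ is an isomorphism and the inclusion $\widehat E\subseteq\mu^{-1}(E)$ of the corresponding Proj's is an equality. The one caveat you flag --- that $u$ be a non-zero-divisor in $R$ itself --- is not actually needed: you only use the implication $(I^j:u)=I^j\Rightarrow u\,(I^j:u)=u\,I^j$, which is multiplication by $u$ rather than cancellation, so the chain $I^j\cap(u)=u\,(I^j:u)=u\,I^j$ follows from Condition~II alone and your "main obstacle" evaporates.
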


\begin{proof}
Indeed, assume Condition~II holds. Then letting $\cI$, $\cJ$ denote respectively
the ideal sheaves of $\partial \Til X_\Gamma$ and $E$ in $\Til X_\Gamma$, Condition~II
implies $\cJ\cap \cI^j =\cJ\cdot \cI^j$ for $j\gg 0$, and hence the natural morphism
\[
\frac{\cI^j}{\cJ\cdot \cI^j} \to
\frac{\cI^j+\cJ}{\cJ}
\]
is an isomorphism for $j\gg 0$. It follows that the natural inclusion
\[
\text{Proj}_{\cO_{\Til X}}\left(\bigoplus_j \frac{\cI^j+\cJ}{\cJ}\right)
=B\ell_{E\cap \partial\Til X} E\,\subseteq\,
\mu^{-1}(E)=
\text{Proj}_{\cO_{\Til X}}\left(\bigoplus_j \frac{\cI^j}{\cJ\cdot \cI^j}\right)
\]
is an equality, verifying Claim~\ref{condIIex}.
\end{proof}

\begin{remark}
By the same token, the proper transform of the exceptional divisor $D$ equals
its inverse image in the blow-up along $\partial \Til X_\Gamma$. This
is the behavior expected if the (nonsingular) hypersurface $D$ meets the
center of a blow-up transversally, so Condition~II, like Condition~I, 
appears to express a measure of transversality of the intersection of
$D$ with $\Til X$.
\qede\end{remark}

\begin{remark}\label{InotII}
The two conditions differ: for example, Condition~II
does not hold for the second graph displayed in Example~\ref{cIIexample}
with respect to edge $e$, while Condition~I does hold in this case.

However, we do not know of examples of graphs for which Condition~II
holds and Condition~I does not. It is conceivable that such examples
exist (cf.~\S\ref{sharps}).
\qede\end{remark}

\subsection{}\label{disjoinable}
Finally, we discuss one case in which conditions~I and~II do not both hold.
If a graph is obtained by taking the union of two graphs $\Gamma'$, 
$\Gamma''$, joined at a vertex, then its graph polynomial equals 
the product $\Psi_{\Gamma'} \cdot \Psi_{\Gamma''}$. If neither $\Gamma'$
nor $\Gamma''$ is a forest, we say that the graph is `disjoinable'; note that
the graph hypersurface of a disjoinable graph is singular in codimension~$1$
(We do not know whether the converse holds.)

\begin{claim}
Let $\Gamma$ be a graph such that $X_\Gamma$ is nonsingular in codimension~$1$.
Let $e$ be an edge such that $\Gamma\smallsetminus e$ is disjoinable.
Then at least one of conditions~I and ~II fails for $(\Gamma,e)$.
\end{claim}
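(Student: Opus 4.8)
The plan is to argue in the following form: assuming Condition~I holds for $(\Gamma,e)$, I will show that the nonsingularity of $X_\Gamma$ in codimension~$1$ forces Condition~II to fail. (If Condition~I already fails there is nothing to prove.) The mechanism I expect to exploit is that disjoinability produces a codimension-one singular locus on the exceptional divisor $E$ which, under Condition~I, becomes a full-dimensional component of the center $\partial\Til X_\Gamma$ lying \emph{inside} $E$; blowing up such a center cannot leave $E$ unthickened, which is exactly what Condition~II forbids through Claim~\ref{condIIex}.

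First I would set up the disjoinable geometry. Writing $\Gamma\smallsetminus e=\Gamma'\cup\Gamma''$ joined at a vertex, we have $\Psi_{\Gamma\smallsetminus e}=\Psi_{\Gamma'}\cdot\Psi_{\Gamma''}$ in disjoint sets of variables, with $\deg\Psi_{\Gamma'},\deg\Psi_{\Gamma''}\ge 1$. By the computation in the proof of Claim~\ref{condIex}, $E\cong X_{\Gamma\smallsetminus e}$ is the reduced hypersurface $\{\Psi_{\Gamma'}\Psi_{\Gamma''}=0\}$ of $D\cong\Pbb^{n-2}$. I would then single out
\[
W:=X_{\Gamma'}\cap X_{\Gamma''}=\{\Psi_{\Gamma'}=\Psi_{\Gamma''}=0\}\subseteq D,
\]
which (the two factors involving disjoint variables, hence meeting properly) has codimension~$2$ in $D$, i.e.\ dimension $n-4$, and which lies in $\partial E=\mathrm{Sing}\,E$ since the gradient of the product $\Psi_{\Gamma'}\Psi_{\Gamma''}$ vanishes identically along $W$. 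Invoking Condition~I through Claim~\ref{condIex} gives $\partial E=E\cap\partial\Til X_\Gamma$, so in particular $W\subseteq\partial\Til X_\Gamma$ while $W\subseteq E$.

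Next I would bound the dimension of the center. Since $\Til X_\Gamma\to X_\Gamma$ is an isomorphism away from $E$, the part of $\partial\Til X_\Gamma$ lying off $E$ maps into $\partial X_\Gamma$, of dimension $\le n-4$ by the codimension-one-smoothness hypothesis; and on $E$ one has $\partial\Til X_\Gamma\cap E=\partial E=\mathrm{Sing}\,E$, again of dimension $\le n-4$ because $E$ is a reduced hypersurface of dimension $n-3$. Hence $\dim\partial\Til X_\Gamma\le n-4$, so the $(n-4)$-dimensional $W$ is a union of irreducible components of the center $\partial\Til X_\Gamma$, all contained in $E$. This is the step where the hypothesis on $X_\Gamma$ is essential: were $X_\Gamma$ singular in codimension~$1$, the center could acquire an $(n-3)$-dimensional component and $W$ would no longer be forced to be a component, collapsing the argument.

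Finally I would compare transforms. Let $\mu:\widehat X_\Gamma\to\Til X_\Gamma$ be the blow-up along $\partial\Til X_\Gamma$ as in Claim~\ref{condIIex}. At the generic point of a component $W_0\subseteq W$ the center has codimension~$2$ in $\Til X_\Gamma$, hence is not locally principal there, so $\mu$ produces an exceptional divisor $\mathcal E_0$ dominating $W_0$; as $W_0\subseteq E$, this divisor lies in the total transform $\mu^{-1}(E)$, whereas it cannot lie in the proper transform $\widehat E$, since every component of $\widehat E$ dominates $E$ while $\mathcal E_0$ maps onto the lower-dimensional $W_0\subsetneq E$. Thus $\mu^{-1}(E)\supsetneq\widehat E$, and by Claim~\ref{condIIex} Condition~II must fail. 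I expect the main obstacle to be precisely this last comparison: one must argue carefully that $W_0$ genuinely contributes a new exceptional divisor to $\mu^{-1}(E)$ (controlling the possibly non-reduced scheme structure of $\partial\Til X_\Gamma$ along $W_0$, and checking $\mathcal E_0\not\subseteq\widehat E$ on dimension grounds), and to confirm that strict containment of supports already defeats the scheme-theoretic equality $\widehat E=\mu^{-1}(E)$ guaranteed by Condition~II.
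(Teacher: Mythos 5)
Your argument is correct and follows the same route as the paper's proof: your $W$ is the paper's locus $Z$ with ideal $(u,\Psi_{\Gamma'},\Psi_{\Gamma''})$, and the paper likewise uses Condition~I to place this locus inside $\partial \Til X_\Gamma$, the codimension-one-smoothness hypothesis to force it to be a union of components of the center, and Claim~\ref{condIIex} to contradict Condition~II. Your final step merely spells out in more detail why a component of $\partial \Til X_\Gamma$ contained in $E$ defeats the equality $\widehat E=\mu^{-1}(E)$, a point the paper leaves implicit.
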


For example, the second graph drawn in Example~\ref{cIIexample} is of this type,
with respect to the bottom edge. (The first is not, since while removing the bottom 
edge does produce the join of two graphs, one of these is a tree.)

\begin{proof}
If condition~I does not hold, we are done; so we may assume that condition~I
holds, and we will show that condition~II does not hold in this case.

We use notation as in the discussion following the statement of Claim~\ref{condIex}.
After setting $t_e$ to $1$, we have
\[
\Psi_\Gamma=\Psi_{\Gamma'}\cdot \Psi_{\Gamma''} + \Psi_{\Gamma/e}
\]
by assumption, where $\Psi_{\Gamma'}$ and $\Psi_{\Gamma''}$ are graph 
polynomials of degree~$\ge 1$. In the chart of $\Til \Abb^{n-1}$ with coordinates 
$(u_1,\dots,u_{n-2},u)$, the equation of $\Til X$ is
\[
\Psi_{\Gamma'}(u_1,\dots,u_{n-2},1)\cdot \Psi_{\Gamma''}(u_1,\dots,u_{n-2},1) 
+ u \,\Psi_{\Gamma/e}(u_1,\dots,u_{n-2},1) =0\quad,
\]
and $E$ has ideal
\[
(u,\Psi_{\Gamma'}(u_1,\dots,u_{n-2},1)\cdot \Psi_{\Gamma''}(u_1,\dots,u_{n-2},1))\quad.
\]
In particular, the singularity subscheme $\partial E$ of $E$ contains the 
locus $Z$ with
ideal $(u,\Psi_{\Gamma'}(u_1,\dots,u_{n-2},1), \Psi_{\Gamma''}(u_1,\dots,u_{n-2},1))$.
As we are assuming that condition~I holds, we have that $\partial E=
E\cap \partial \Til X$ (cf.~Claim~\ref{condIex}); in particular, 
$Z\subseteq \partial \Til X$. On the other
hand, note that $Z$ has codimension~$1$ in $E$, hence codimension~$2$ 
in $\Til X$; since $X$ is nonsingular in codimension~$1$ and 
$Z\subseteq E$, $Z$ must consist of a collection of components of 
$\partial \Til X$. But then $E$ contains components of $\partial \Til X$, and it 
follows that the condition in Claim~\ref{condIIex} is not verified. 
Hence $(\Gamma,e)$ does not satisfy condition~II.
\end{proof}


\section{CSM classes of transversal intersections}\label{generals}

\subsection{}
In this section we discuss a formula expressing the Chern-Schwartz-MacPherson 
class of the intersection of a variety $X$ with a hypersurface $D$, in terms of of
$\csm(X)$ and of the class of $D$. This section can be read independently of
the rest of the paper.

Our template is the transversal intersection of nonsingular varieties. Let $V$ be a
nonsingular variety, and let $D$, $X$ be nonsingular subvarieties of $V$. Assume
that $D$ is a hypersurface, and that $D$ and $X$ meet transversally.
In this case $D\cap X$ is a nonsingular hypersurface of $X$, and $\cO_X(D\cap X)
=\cO_V(D)|_X$, hence (harmlessly abusing notation)
\[
c(T(D\cap X))\cap [D\cap X] = \frac{c(TX)}{c(N_DV)}\cap [D\cap X]
=\frac{D}{1+D} \cap c(TX)\cap [X]\quad,
\]
i.e.,
\[
\csm(D\cap X) = \frac D{1+D} \cap \csm(X)\quad.
\]
(This equality holds in $A_*X$.)
We are interested in generalizing this formula to the case in which $X$ is possibly
singular.

\subsection{}
The key question is, of course, what `transversal' should mean in the singular
case. The conditions presented in \S\ref{condis}
are precisely concocted to make this requirement precise.

We assume $V$ is a nonsingular variety, $D$ and $X$ are reduced hypersurfaces 
of $V$, and $D$ is nonsingular. We denote by $\partial X$ the singularity subscheme 
of $X$, defined by the ideal of partial derivatives of local equations. We denote
by $\rho: \Til V \to V$ the blow-up along $\partial X$. Further, $\Til D$ denotes
the proper transform of $D$ in $\Til V$.

\begin{theorem}\label{adj}
Assume that
\begin{enumerate}
\item $\partial (D\cap X)=D\cap \partial X$;
\item $\Til D=\rho^{-1} D$. 
\end{enumerate}
Then 
\[
\csm(D\cap X) = \frac D{1+D} \cap \csm(X)
\]
in $A_*X$.
\end{theorem}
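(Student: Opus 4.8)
The plan is to evaluate both sides through the formula expressing the CSM class of a hypersurface of a nonsingular variety in terms of the Segre class of its singularity subscheme. Writing $\mathcal L=\cO_V(X)$ and applying this formula to $X\subseteq V$ gives
\[
\csm(X)=c(TV)\cap\Big(s(X,V)+c(\mathcal L)^{-1}\cap\big(s(\partial X,V)^\vee\otimes \mathcal L\big)\Big),
\]
while applying it to $W:=D\cap X$, viewed as a hypersurface of the \emph{nonsingular} variety $D$ with $\cO_D(W)=\mathcal L|_D$, gives the analogous expression on $D$ in terms of $s(W,D)$ and $s(\partial W,D)$. The strategy is to match the two expressions term by term: the ``naive'' summands $c(TV)\cap s(X,V)$ built from the divisors themselves, and the singular corrections built from the Segre classes of $\partial X$ and $\partial W$. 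Throughout, $\iota\colon D\hookrightarrow V$ denotes the inclusion; since the Segre classes live on the singularity subschemes and $\partial W=D\cap\partial X\subseteq\partial X\subseteq X$, every term refines to a well-defined class in $A_*X$, where the stated equality holds.

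First I would dispose of the naive terms. By adjunction $c(TD)=c(TV)|_D/(1+D)$, and the normal bundle of $W$ in $D$ is $\mathcal L|_D$; together with the projection formula these give $\iota_*\big(c(TD)\cap s(W,D)\big)=\frac{D}{1+D}\cap\big(c(TV)\cap s(X,V)\big)$, producing exactly the factor $\frac D{1+D}$. This step is routine. The same two adjunction identities reduce the matching of the singular corrections to the single Segre-class statement
\begin{equation*}
\tag{$\star$}
\iota_*\big(s(\partial W,D)^\vee\otimes \mathcal L|_D\big)=D\cap\big(s(\partial X,V)^\vee\otimes \mathcal L\big)\quad.
\end{equation*}

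The heart of the argument, and where both hypotheses enter, is the underlying identity $\iota_*\,s(\partial W,D)=D\cap s(\partial X,V)$. To prove it, recall that $\rho\colon\Til V\to V$ is the blow-up along $\partial X$, with exceptional divisor $\mathcal E$, so that $s(\partial X,V)=\rho_*\big(\tfrac{\mathcal E}{1+\mathcal E}\cap[\Til V]\big)$. The proper transform $\Til D$ is the blow-up of $D$ along the scheme-theoretic intersection $D\cap\partial X$, with exceptional divisor $\mathcal E|_{\Til D}$; by hypothesis (1), $D\cap\partial X=\partial W$, so $\rho|_{\Til D}\colon\Til D\to D$ is the blow-up of $D$ along $\partial W$ and hence $\iota_*\,s(\partial W,D)=\rho_*\big(\tfrac{\mathcal E}{1+\mathcal E}\cap[\Til D]\big)$ (using the projection formula along $\Til D\hookrightarrow\Til V$ and $\mathcal E|_{\Til D}=\mathcal E\cap\Til D$). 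By hypothesis (2), $\Til D=\rho^{-1}D$, i.e.\ $[\Til D]=\rho^*D\cap[\Til V]$ with no exceptional correction; the projection formula then gives $\rho_*\big(\tfrac{\mathcal E}{1+\mathcal E}\cap\rho^*D\cap[\Til V]\big)=D\cap s(\partial X,V)$, as claimed. Finally I would upgrade this to $(\star)$ by a degree check: capping with the divisor $D$ raises codimension by one, and since the operations ${}^\vee$ and $\otimes \mathcal L$ are defined degreewise by codimension in the ambient, matching the codimension-$i$ part of $s(\partial W,D)$ on $D$ (pushed forward, codimension $i+1$ on $V$) with $D\cap\big(\text{codimension-}i\text{ part of }s(\partial X,V)\big)$ carries the sign $(-1)^i$ and the twist $c(\mathcal L)^{-i}$ through unchanged, yielding $(\star)$ with all signs and twists in place.

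The main obstacle is precisely this Segre-class identity together with the degreewise compatibility of ${}^\vee$ and $\otimes\mathcal L$ with capping by $D$: hypothesis (1) is what identifies the center of the restricted blow-up with $\partial W$, so that the left-hand side of the identity really is $s(\partial W,D)$, and hypothesis (2) is what removes the exceptional correction in $[\Til D]$ so that the projection formula applies; neither condition alone suffices. Once $(\star)$ is in hand, substituting back into the two instances of the hypersurface formula and adding the already-matched naive terms gives $\csm(D\cap X)=\frac D{1+D}\cap\csm(X)$ in $A_*X$.
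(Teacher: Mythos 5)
Your proposal is correct and follows essentially the same route as the paper: both reduce the statement, via the hypersurface formula expressing CSM classes in terms of Segre classes of singularity subschemes, to the identity $s(\partial(D\cap X),D)=D\cdot s(\partial X,V)$, and both prove that identity using the blow-up along $\partial X$, birational invariance of Segre classes, and the projection formula, with hypothesis (2) removing the exceptional correction in $[\Til D]$ and hypothesis (1) identifying the center of the restricted blow-up with $\partial(D\cap X)$. The only cosmetic difference is that you spell out the degreewise compatibility of ${}^\vee$ and $\otimes$ with capping by $D$, which the paper carries out implicitly in its chain of reductions.
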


\begin{proof}
Our main tool is Theorem~I.4 in \cite{MR2001i:14009}, which relates the
Chern-Schwartz-MacPherson class of a hypersurface with the Segre class of its
singularity subscheme. Viewing $D\cap X$ as a hypersurface in $D$, this result
yields
\[
\csm(D\cap X)=c(TD)\cap \left((s(D\cap X, D)+ c(\cO(X))^{-1}\cap
(s(\partial(D\cap X),D)^\vee \otimes_D \cO(X))\right)
\]
(with notation as in \cite{MR2001i:14009}, \S1.4); by the same token,
\[
\csm(X)=c(TV)\cap \left(s(X, V)+ c(\cO(X))^{-1}\cap
(s(\partial X,V)^\vee \otimes_V \cO(X))\right)\quad.
\]
It follows that
\[
\frac D{1+D}\cap \csm(X)
=c(TD)\cap (s(D\cap X,D)+D\cdot c(\cO(X))^{-1}\cap
(s(\partial X,V)^\vee \otimes_V \cO(X))\quad.
\]
Therefore, the formula stated in Theorem~\ref{adj} would follow from the equality
\[
D\cdot c(\cO(X))^{-1}\cap (s(\partial X,V)^\vee \otimes_V \cO(X))
=c(\cO(X))^{-1}\cap (s(\partial(D\cap X),D)^\vee \otimes_D \cO(X))\quad,
\]
and hence from
\[
(D\cdot s(\partial X,V))^\vee \otimes_D \cO(X))
=s(\partial(D\cap X),D)^\vee \otimes_D \cO(X)\quad,
\]
and finally from
\[
D\cdot s(\partial X, V)=s(\partial(D\cap X),D)
\]
(in $A_*X)$. This reduces the proof of Theorem~\ref{adj} to the following claim:

\begin{claim}
Under hypotheses (1) and (2) from the statement of Theorem~\ref{adj},
\[
s(\partial (D\cap X),D) = D\cdot s(\partial X, V)
\]
in $A_*(D\cap X)$.
\end{claim}

To prove this, consider the blow-up of
$V$ along $\partial X$, with exceptional divisor $F$; the proper transform of
$D$ may be viewed as the blow-up of the latter along $D\cap \partial X$:
\[
\xymatrix@!@C=-15pt@R=-15pt{
& F \ar@{-}[d]^(.6)\sigma \ar@{^(->}[rr] & & \Til V \ar[dd]^\rho \\
\Til D\cap F\ar[dd]_{\sigma'} \ar@{^(->}[rr] \ar@{^(->}[ur] & \ar[d] & \Til D \ar[dd] \ar@{^(->}[ur] \\
& \partial X \ar@{^(-}[r] & \ar[r] & V \\
D\cap \partial X \ar@{^(->}[rr] \ar@{^(->}[ur] & & D \ar@{^(->}[ur]
}
\]
By the birational invariance of Segre classes,
\begin{align*}
s(\partial X,V) &=\sigma_* s(F,\Til V)=\sigma_*\left(\frac{F}{1+F}\cap [\Til V]\right)\\
s(D\cap \partial X,D) &=\sigma'_* s(\Til D\cap F,\Til D)
=\sigma'_*\left(\frac{F}{1+F}\cap [\Til D]\right)\quad.
\end{align*}
By hypothesis (2), $\rho^{-1} D=\Til D$, hence $c_1(\sigma^*\cO(D)|_{\partial X})$ is 
represented by $\Til D\cap F$. Therefore, by the projection formula (\cite{85k:14004}, 
Proposition~2.3 (c))
\[
\sigma'_*\left(\frac F{1+F}\cap [\Til D]\right)
=\sigma'_*\left(\frac{[\Til D\cap F]}{1+F}\right)
=\sigma'_*\left(\sigma^* D\cdot \frac{F}{1+F}\cap [\Til V]\right)
=D\cdot s(\partial X,V)\quad.
\]
Thus,
\[
s(D\cap \partial X,D)=D\cdot s(\partial X,V)\quad.
\]
Finally, $D\cap \partial X=\partial (D\cap X)$ by hypothesis (1), and the claim
follows. This concludes the proof of Theorem~\ref{adj}.
\end{proof}

\subsection{}\label{sharps}
Theorem~\ref{adj} is sharp, in the sense that both hypotheses are necessary
for the stated formula to hold.
\smallskip

To see that the first hypothesis is needed, let $X$
be a nonsingular quadric in $\Pbb^3$, and let $D$ be a hyperplane tangent
to $X$.
\begin{center}
\includegraphics[scale=.6]{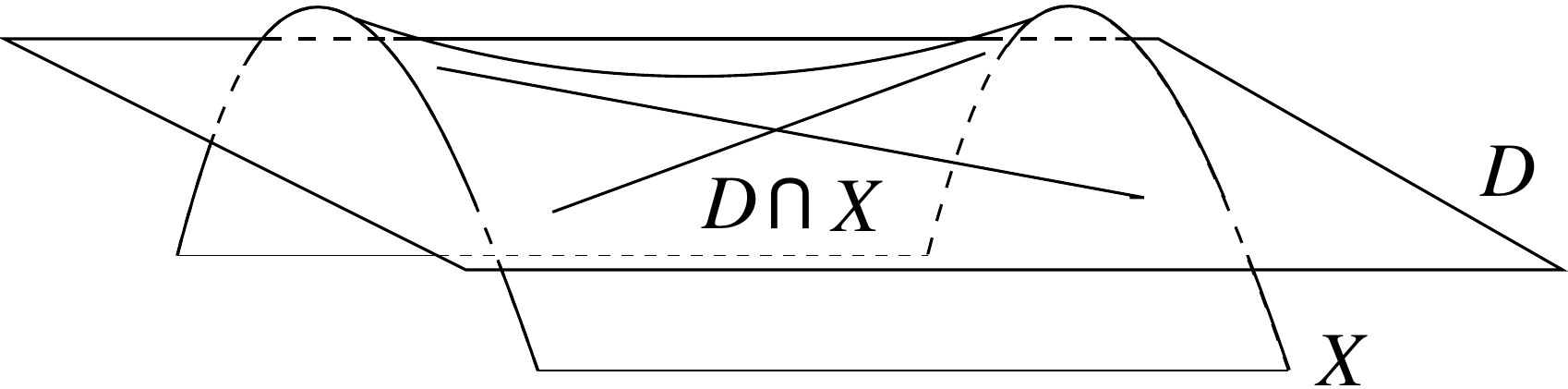}
\end{center}
Then $D\cap X$ is singular, while $\partial X=\emptyset$; in particular, (1) is not 
satisfied. Working in the ambient $[\Pbb^3]$ for convenience, we have
\[
\csm(D\cap X) = 2[\Pbb^1]+3[\Pbb^0]
\]
and 
\[
\frac D{1+D}\cap \csm(X) = \frac H{1+H} \cap (2[\Pbb^2] + 4[\Pbb^1] +4[\Pbb^0])
=2[\Pbb^1]+2[\Pbb^0]
\]
(where $H$ is the hyperplane class). Therefore, the stated formula does not hold.
Note that the second hypothesis holds (trivially) in this case, since 
$\partial X=\emptyset$.
\smallskip

To see that the second hypothesis is also necessary, let $X\subseteq \Pbb^3$
be a quadric cone, and let $D$ be a general hyperplane through the vertex.
\begin{center}
\includegraphics[scale=.6]{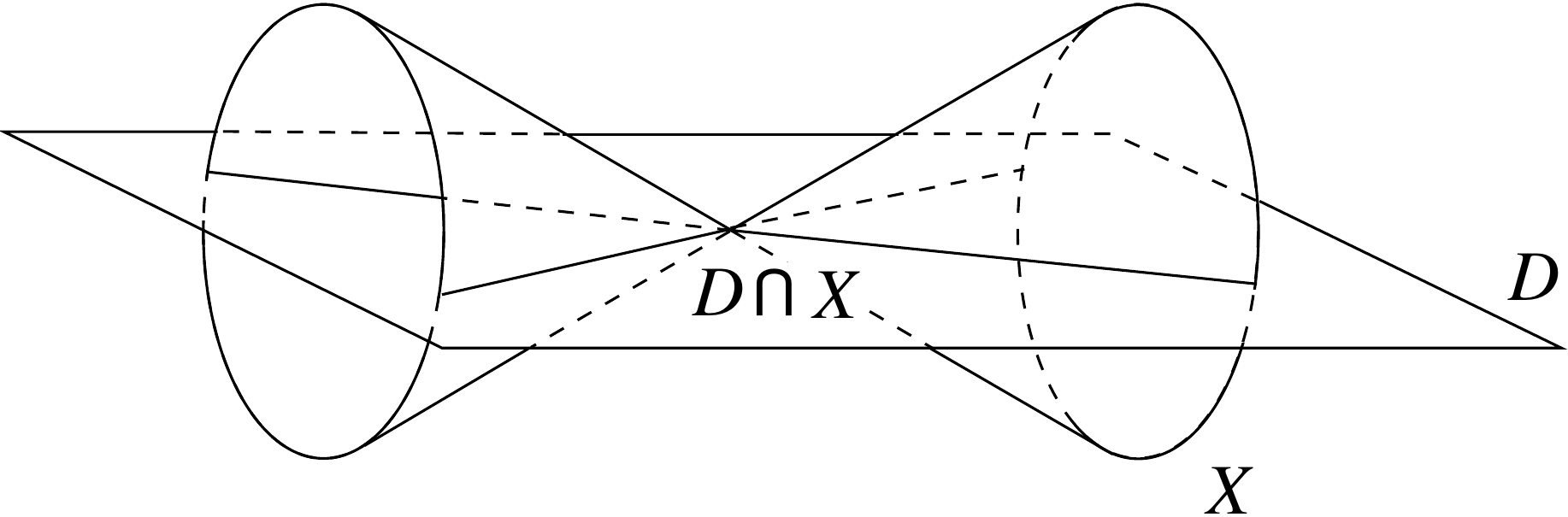}
\end{center}
Then (2) fails as $D$ contains $\partial X$. Again working in $\Pbb^3$, we have
\[
\csm(D\cap X) = 2[\Pbb^1]+3[\Pbb^0]
\]
and
\[
\frac D{1+D}\cap \csm(X) = \frac H{1+H} \cap (2[\Pbb^2] + 4[\Pbb^1] +3[\Pbb^0])
=2[\Pbb^1]+2[\Pbb^0]
\]
as before. (The coefficient of $[\Pbb^0]$ in $\csm(X)$ is in fact irrelevant to
this computation.) Note that $\partial(D\cap X)=\partial X=$ the vertex of the cone, 
and in particular $\partial(D\cap X)=D\cap \partial X$, so that the first hypothesis 
does hold in this case. 


\section{Deletion-contraction for Chern classes of graph hypersurfaces}\label{particulars}

\subsection{}
Now we return to the case of graph hypersurfaces. Recall that $\Gamma$ denotes
a graph with $n$ edges; $e$ denotes a regular edge of $\Gamma$; $X_\Gamma
\subseteq \Pbb^{n-1}$ is the corresponding hypersurface. As $\Gamma\smallsetminus e$
is not a forest, then the point $p$ obtained by setting all coordinates except $t_e$
to zero is a point of $X_\Gamma$.

We consider the blow-up $\Til\Pbb^{n-1} \to \Pbb^{n-1}$ at $p$, and the blow-up
$\Til X_\Gamma$ of $X_\Gamma$ at $p$ realized as the proper transform of 
$X_\Gamma$ in $\Til \Pbb^{n-1}$. We denote $D$ the exceptional divisor in 
$\Til \Pbb^{n-1}$, so that $E=D\cap \Til X_\Gamma$ is the exceptional divisor in 
$\Til X_\Gamma$. Applying Theorem~\ref{adj} to this situation yields:

\begin{corol}\label{maincor}
Assume conditions~I and~II from \S\ref{condis} hold for $(\Gamma,e)$. Then
\[
\csm(E)=\frac D{1+D}\cap \csm(\Til X_\Gamma)\quad.
\]
\end{corol}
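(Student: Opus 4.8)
The plan is to deduce the corollary directly from Theorem~\ref{adj}, once the right dictionary between the two setups is in place. I would apply Theorem~\ref{adj} with $V=\Til\Pbb^{n-1}$ the blow-up of $\Pbb^{n-1}$ at $p$, with $X=\Til X_\Gamma$ the proper transform of $X_\Gamma$, and with $D$ the exceptional divisor of this blow-up. Under this dictionary $\partial X=\partial\Til X_\Gamma$ and $D\cap X=D\cap\Til X_\Gamma=E$, so that the conclusion $\csm(D\cap X)=\frac{D}{1+D}\cap\csm(X)$ of Theorem~\ref{adj} becomes exactly the asserted formula $\csm(E)=\frac{D}{1+D}\cap\csm(\Til X_\Gamma)$, read in $A_*\Til X_\Gamma$. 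The work therefore reduces to checking that the ambient hypotheses and the two numbered hypotheses of Theorem~\ref{adj} hold for this data.

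The ambient hypotheses are immediate. The variety $V=\Til\Pbb^{n-1}$ is nonsingular, being the blow-up of a nonsingular variety along a nonsingular center (the point $p$); $D$ is the exceptional divisor of this blow-up, hence a nonsingular hypersurface, isomorphic to $\Pbb^{n-2}$; and $X=\Til X_\Gamma$ is a reduced hypersurface of $V$, since it is the proper transform of the reduced hypersurface $X_\Gamma$ under a blow-up at a point. Thus Theorem~\ref{adj} applies as soon as its hypotheses (1) and (2) hold here.

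Hypothesis (1) reads $\partial(D\cap X)=D\cap\partial X$, that is, $\partial E=D\cap\partial\Til X_\Gamma$. This is where Condition~I enters: Claim~\ref{condIex} shows that Condition~I gives $\partial E=E\cap\partial\Til X_\Gamma$. Since $\partial\Til X_\Gamma$ is a subscheme of $\Til X_\Gamma$ and $E=D\cap\Til X_\Gamma$, one has scheme-theoretically $E\cap\partial\Til X_\Gamma=D\cap(\Til X_\Gamma\cap\partial\Til X_\Gamma)=D\cap\partial\Til X_\Gamma$ (at the level of ideals, $\cI_{\Til X_\Gamma}\subseteq\cI_{\partial\Til X_\Gamma}$ makes the two sums of ideals agree), so hypothesis (1) holds. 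Hypothesis (2) requires $\Til D=\rho^{-1}D$, where now $\rho:\Til V\to V$ is the blow-up of $V=\Til\Pbb^{n-1}$ along $\partial X=\partial\Til X_\Gamma$ and $\Til D$ is the proper transform of the exceptional divisor $D$. This is precisely the content of the remark following Claim~\ref{condIIex}, which records that Condition~II forces the proper transform of $D$ to coincide with its inverse image under the blow-up along $\partial\Til X_\Gamma$. With both hypotheses verified, Theorem~\ref{adj} yields the formula.

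The only genuinely delicate point is matching hypothesis (2) with Condition~II. Condition~II and Claim~\ref{condIIex} are phrased in terms of the blow-up of $\Til X_\Gamma$ along its singularity subscheme and the behaviour of $E$ there (the equality $\widehat E=\mu^{-1}(E)$), whereas Theorem~\ref{adj}(2) concerns the ambient blow-up of $V$ and the divisor $D$. I would therefore be careful to invoke the version recorded in the remark after Claim~\ref{condIIex} — the equality $\Til D=\rho^{-1}D$ for the ambient blow-up — rather than the statement about $E$ itself; both follow from the same non-zero-divisor condition $\cJ\cap\cI^j=\cJ\cdot\cI^j$ for $j\gg0$, but it is the former that feeds Theorem~\ref{adj}. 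A secondary point worth stating is that the equality of Theorem~\ref{adj} lives in $A_*X=A_*\Til X_\Gamma$, so that $\csm(E)$ in the corollary is to be read as its pushforward under the inclusion $E\hookrightarrow\Til X_\Gamma$; this is harmless but fixes the interpretation of the stated formula.
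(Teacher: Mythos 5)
Your proof is correct and follows exactly the route the paper takes: the paper's entire argument is the observation that, by Claim~\ref{condIex} and the remark after Claim~\ref{condIIex}, Conditions~I and~II supply hypotheses (1) and (2) of Theorem~\ref{adj} for $V=\Til\Pbb^{n-1}$, $X=\Til X_\Gamma$, and $D$ the exceptional divisor. Your more detailed verification (in particular the identification $E\cap\partial\Til X_\Gamma=D\cap\partial\Til X_\Gamma$ and the care in using the ambient-blow-up form of Condition~II) is sound and simply makes explicit what the paper leaves implicit.
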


Indeed, as observed in \S\ref{transv}, conditions~I and~II imply the hypotheses of 
Theorem~\ref{adj}.

\subsection{}\label{gende}
We are ready to prove a deletion-contraction formula for Chern classes
of graph hypersurfaces, subject to the conditions presented in~\S\ref{condis}.

We can view the blow-up $\Til\Pbb^{n-1}$ as the graph of the projection
$\Pbb^{n-1} \dashrightarrow \Pbb^{n-2}$ centered at $p$:
\[
\xymatrix@C=10pt{
& \Til\Pbb^{n-1} \ar[dl] \ar[dr] \\
\Pbb^{n-1} \ar@{-->}[rr] & &  \Pbb^{n-2}
}
\]
and likewise for $\Til X_\Gamma$:
\[
\xymatrix@C=10pt{
& \Til X_\Gamma \ar[dl]_\nu \ar[dr]^\pi \\
X_\Gamma \ar@{-->}[rr] & &  \Pbb^{n-2}
}
\]
This situation was briefly described in \cite{delecon}, end of \S4: $\nu$ is the blow-up 
of $X_\Gamma$ at~$p$, while $\pi$ realizes $\Til X_\Gamma$ as the blow-up
of $\Pbb^{n-2}$ along $X_{\Gamma\smallsetminus e}\cap X_{\Gamma/e}$;
the fibers of $\pi$ are points away from $X_{\Gamma\smallsetminus e}
\cap X_{\Gamma/e}$, and $\Pbb^1$ over over points of 
$X_{\Gamma\smallsetminus e}\cap X_{\Gamma/e}$.
The exceptional divisor $E$ of $\nu$ is a copy of $X_{\Gamma\smallsetminus e}$
in $D\cong \Pbb^{n-2}$ (as was verified in~\S\ref{transv}). The restriction of
$\Til \Pbb^{n-1}\to \Pbb^{n-2}$ to $D$ gives an isomorphism $D\to \Pbb^{n-2}$.

We are aiming for formulas involving the polynomial `Feynman rules' 
$C_\Gamma(t)$ carrying the information of the CSM class of $X_\Gamma$.
Denote by $H$ the hyperplane class in $\Pbb^{n-1}$.
Our main objective is essentially a formula for the polynomial
\[
\sum_{i\ge 0} t^i \int H^i\cap \csm(X_\Gamma)
\]
encoding the degrees of the terms in $\csm(X_\Gamma)$; $C_\Gamma(t)$ may be
computed easily from this polynomial.

\subsection{}
We let $H$, resp.~$h$, denote the class in $\Pbb^{n-1}$, resp.~$\Pbb^{n-2}$.
The Chow group of $\Til X_\Gamma$ is generated by $\pi^*(h)$ and the class
$E$ of the exceptional divisor.

\begin{lemma}\label{tech1}
With notation as above,
\[
\sum_{i\ge 0} t^i \int H^i\cap \csm(X_\Gamma)
=1-\chi(X_{\Gamma\smallsetminus e})+\int \frac {1+tE}{1-t\,\pi^*h}
\cap \csm(\Til X_\Gamma)
\quad.
\]
\end{lemma}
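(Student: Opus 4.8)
The plan is to move everything from $X_\Gamma$ onto the blow-up $\Til X_\Gamma$ using the covariant functoriality of CSM classes under the birational morphism $\nu$, and then to convert the hyperplane class $H$ into the two classes $\pi^*h$ and $E$ that generate $A_*\Til X_\Gamma$. First I would compute the pushforward $\nu_*\csm(\Til X_\Gamma)$. Since $\nu$ is an isomorphism away from $p$ and $\nu^{-1}(p)=E\cong X_{\Gamma\smallsetminus e}$, the value of the constructible function $\nu_*\one_{\Til X_\Gamma}$ at a point is the Euler characteristic of the corresponding fiber, so $\nu_*\one_{\Til X_\Gamma}=\one_{X_\Gamma}+(\chi(X_{\Gamma\smallsetminus e})-1)\,\one_{\{p\}}$. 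Applying $\csm$ and using $\csm\circ\nu_*=\nu_*\circ\csm$ gives
\[
\nu_*\csm(\Til X_\Gamma)=\csm(X_\Gamma)+(\chi(X_{\Gamma\smallsetminus e})-1)\,[p]\quad,
\]
so that $\csm(X_\Gamma)=\nu_*\csm(\Til X_\Gamma)-(\chi(X_{\Gamma\smallsetminus e})-1)[p]$.

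Next I would cap with $\sum_{i\ge0}t^iH^i=(1-tH)^{-1}$ and take degrees. The point class contributes only in degree $i=0$, producing the constant $-(\chi(X_{\Gamma\smallsetminus e})-1)=1-\chi(X_{\Gamma\smallsetminus e})$, which is exactly the first summand on the right-hand side. For the remaining term the projection formula turns $\int H^i\cap\nu_*\csm(\Til X_\Gamma)$ into $\int(\nu^*H)^i\cap\csm(\Til X_\Gamma)$, whence
\[
\sum_{i\ge0}t^i\int H^i\cap\nu_*\csm(\Til X_\Gamma)=\int\frac1{1-t\,\nu^*H}\cap\csm(\Til X_\Gamma)\quad.
\]
Here I would invoke the relation $\nu^*H=\pi^*h+E$, obtained by restricting to $\Til X_\Gamma$ the identity $b^*H=q^*h+D$ on the blow-up $b\colon\Til\Pbb^{n-1}\to\Pbb^{n-1}$ with projection $q\colon\Til\Pbb^{n-1}\to\Pbb^{n-2}$ (indeed $q^*h$ is represented by the proper transform $b^*H-D$ of a hyperplane through $p$).

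It then remains to prove $\int\frac1{1-t\,\nu^*H}\cap\csm(\Til X_\Gamma)=\int\frac{1+tE}{1-t\,\pi^*h}\cap\csm(\Til X_\Gamma)$. Substituting $\nu^*H=\pi^*h+E$, a direct manipulation of rational functions (over the ring of divisor classes) shows that the difference of the two integrands is
\[
\frac1{1-t\,\nu^*H}-\frac{1+tE}{1-t\,\pi^*h}=\frac{t^2\,E\cdot\nu^*H}{(1-t\,\nu^*H)(1-t\,\pi^*h)}\quad,
\]
so every coefficient of the difference carries a factor $E\cdot\nu^*H$. I expect this last step to be the crux: the claim is that each such term vanishes after capping with $\csm(\Til X_\Gamma)$ and integrating. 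This holds because $\nu$ contracts $E$ to the single point $p$, hence $\iota_E^*\nu^*H=(\nu\circ\iota_E)^*H=0$ in $A^1(E)$; consequently, by the projection formula for $\iota_E\colon E\hookrightarrow\Til X_\Gamma$, we get $\nu^*H\cdot(E\cdot\beta)=0$ for every $\beta\in A_*\Til X_\Gamma$, since $E\cdot\beta$ is supported on $E$.

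With all difference terms vanishing, the two integrals agree, and combining this with the point-class contribution computed above yields the stated formula. The only genuinely geometric input is the vanishing $\nu^*H\cdot(\text{class supported on }E)=0$ coming from the contraction of $E$ to $p$; everything else is the functoriality of $\csm$, the projection formula, and the divisor identity $\nu^*H=\pi^*h+E$.
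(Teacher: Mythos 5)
Your proposal is correct and follows essentially the same route as the paper: pushforward of the constructible function $\one_{\Til X_\Gamma}$ under $\nu$, projection formula, the identity $\nu^*H=\pi^*h+E$, and the vanishing of $E\cdot\nu^*H$ to convert the geometric series. The only cosmetic difference is that you justify $E\cdot\nu^*H=0$ by pulling back through the contraction of $E$ to $p$, whereas the paper represents $H$ by a hyperplane missing $p$; both are valid.
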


\begin{proof}
By the functoriality property of CSM classes,
\begin{align*}
\nu_*(\csm (\Til X_\Gamma)) &= \nu_* (\csm(\one_{\Til X_\Gamma}))
=\csm(\nu_* \one_{\Til X_{\Gamma}})
=\csm(\one_{X_\Gamma}+(\chi(E)-1) \one_p) \\
&=\csm(X_\Gamma)+(\chi(E)-1)[p]\quad.
\end{align*}
Using that $E\cong X_{\Gamma\smallsetminus e}$, and applying the
projection formula, this gives
\begin{align*}
\sum_{i\ge 0} t^i \int H^i\cap \csm(X_\Gamma)
&=1-\chi(X_{\Gamma\smallsetminus e})
+\sum_{i\ge 0} \nu_*\left(t^i \int (\nu^* H)^i \cap \csm(\Til X_\Gamma)\right) \\
&=1-\chi(X_{\Gamma\smallsetminus e})
+\int \frac{1} {1-t\, \nu^* H} \cap \csm(\Til X_\Gamma)
\quad,
\end{align*}
with the last equality due to the fact that push-forwards preserve degrees,
and condensing the summation into a rational function for notational convenience.

Now I claim that $\nu^*H=E+\pi^*h$: indeed, this may be verified by realizing
$H$ as the class of a general hyperplane containing $p$. Also, note that 
$E\cdot \nu^*H=0$: realize $H$ as the class of a hyperplane {\em not\/} 
containing $p$ to verify this. Therefore,
\[
\frac 1{1-t\, \nu^* H} -\frac {1+t E}{1-t\, \pi^*h}
=\frac {t^2 E\cdot \nu^*H}{(1-t\,\nu^* H)(1-t\,\pi^* h)} = 0\quad.
\]
The statement follows.
\end{proof}

\begin{lemma}\label{tech2}
\[
\int \frac 1{1-t\,\pi^*h}
\cap \csm(\Til X_\Gamma)=\frac{(1+t)^{n-1}-1}t
+\sum_{i\ge 0} t^i\int h^i\cap \csm(X_{\Gamma\smallsetminus e}\cap X_{\Gamma/e})\quad.
\]
\end{lemma}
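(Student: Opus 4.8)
The plan is to push the class $\csm(\Til X_\Gamma)$ forward along $\pi$ and exploit the functoriality of CSM classes, in the same spirit as the proof of Lemma~\ref{tech1}. Writing $\frac 1{1-t\,\pi^*h}=\sum_{i\ge 0}t^i(\pi^*h)^i$ and applying the projection formula term by term, the left-hand side becomes
\[
\sum_{i\ge 0}t^i\int (\pi^*h)^i\cap\csm(\Til X_\Gamma)
=\sum_{i\ge 0}t^i\int h^i\cap\pi_*\csm(\Til X_\Gamma)\quad,
\]
so everything reduces to computing the pushforward $\pi_*\csm(\Til X_\Gamma)$ in $A_*\Pbb^{n-2}$.

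For this I would use functoriality in the form $\pi_*\csm(\Til X_\Gamma)=\csm(\pi_*\one_{\Til X_\Gamma})$. The constructible function $\pi_*\one_{\Til X_\Gamma}$ evaluates at a point $q\in\Pbb^{n-2}$ to the Euler characteristic $\chi(\pi^{-1}(q))$. By the description of $\pi$ recalled in \S\ref{gende}, the fiber is a single point (Euler characteristic~$1$) away from $Z:=X_{\Gamma\smallsetminus e}\cap X_{\Gamma/e}$, and a copy of $\Pbb^1$ (Euler characteristic~$2$) over points of $Z$. Hence $\pi_*\one_{\Til X_\Gamma}=\one_{\Pbb^{n-2}}+\one_Z$, and therefore
\[
\pi_*\csm(\Til X_\Gamma)=\csm(\Pbb^{n-2})+\csm(X_{\Gamma\smallsetminus e}\cap X_{\Gamma/e})\quad.
\]
Substituting this back splits the sum into two pieces; the piece coming from $\csm(X_{\Gamma\smallsetminus e}\cap X_{\Gamma/e})$ is precisely $\sum_{i\ge 0}t^i\int h^i\cap\csm(X_{\Gamma\smallsetminus e}\cap X_{\Gamma/e})$, the second term in the statement.

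It then remains only to evaluate the contribution of $\csm(\Pbb^{n-2})$, which is the one genuinely computational (but elementary) point. Using $\csm(\Pbb^{n-2})=(1+h)^{n-1}\cap[\Pbb^{n-2}]=\sum_{k=0}^{n-2}\binom{n-1}{k}[\Pbb^{n-2-k}]$ together with the fact that $\int h^i\cap[\Pbb^j]=1$ exactly when $i=j$ and $0$ otherwise, this piece equals $\sum_{i=0}^{n-2}\binom{n-1}{n-2-i}\,t^i$. Reindexing via $\binom{n-1}{n-2-i}=\binom{n-1}{i+1}$ identifies it with $\sum_{m=0}^{n-2}\binom{n-1}{m+1}t^m=\frac{(1+t)^{n-1}-1}{t}$, the first term in the statement. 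The only subtle step in the whole argument is the correct identification of the fiberwise Euler characteristics of $\pi$, i.e.\ of $\pi_*\one_{\Til X_\Gamma}$; once the $\Pbb^1$-fiber structure over $Z$ from \S\ref{gende} is in hand, the rest is functoriality combined with the binomial bookkeeping above.
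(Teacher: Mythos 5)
Your argument is correct and follows essentially the same route as the paper: projection formula to reduce to $\pi_*\csm(\Til X_\Gamma)$, functoriality of CSM classes together with the fiber description of $\pi$ (point fibers off $X_{\Gamma\smallsetminus e}\cap X_{\Gamma/e}$, $\Pbb^1$-fibers over it) to get $\pi_*\one_{\Til X_\Gamma}=\one_{\Pbb^{n-2}}+\one_{X_{\Gamma\smallsetminus e}\cap X_{\Gamma/e}}$, and then the binomial identity for the $\csm(\Pbb^{n-2})$ contribution. The only cosmetic difference is that the paper records $c(T\Pbb^{n-2})$ as $(1+h)^{n-1}-h^{n-1}$ while you cap $(1+h)^{n-1}$ with $[\Pbb^{n-2}]$, which is the same thing.
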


\begin{proof}
By the projection formula, and since push-forwards preserve degree,
\[
\int \frac 1{1-t\,\pi^*h} \cap \csm(\Til X_\Gamma)=
\sum_{i\ge 0} t^i \int h^i \cap \pi_*(\csm (\Til X_\Gamma))\quad.
\]
Applying the functoriality of CSM classes:
\begin{align*}
\pi_*(\csm (\Til X_\Gamma)) &= \pi_* (\csm(\one_{\Til X_\Gamma}))
=\csm(\pi_* \one_{\Til X_\Gamma})
=\csm(\one_{\Pbb^{n-2}}+\one_{X_{\Gamma\smallsetminus e}
\cap X_{\Gamma/e}})\\
&=c(T\Pbb^{n-2})\cap [\Pbb^{n-2}] + \csm(X_{\Gamma\smallsetminus e}
\cap X_{\Gamma/e})\quad,
\end{align*}
where we have used the description of the fibers of $\pi$ recalled in~\S\ref{gende}.
As $c(T\Pbb^{n-2})=(1+h)^{n-1}-h^{n-1}$, the statement follows.
\end{proof}

\subsection{}
Combining Lemma~\ref{tech1} and~\ref{tech2}, we obtain that {\em if $e$ is a
regular edge on $\Gamma$, then\/}
\begin{multline*}
\sum_{i\ge 0} t^i \int H^i\cap \csm(X_\Gamma)
=1-\chi(X_{\Gamma\smallsetminus e})+\frac{(1+t)^{n-1}-1}t \\
+\sum_{i\ge 0} t^i\int h^i\cap \csm(X_{\Gamma\smallsetminus e}\cap X_{\Gamma/e})
+\int \frac {t\,E}{1-t\,\pi^*h}
\cap \csm(\Til X_\Gamma)\quad,
\end{multline*}
The more technical conditions~I and~II presented in \S\ref{condis} play no role in
this statement. They become relevant in evaluating the last term,
\[
\int \frac {t\,E}{1-t\,\pi^*h} \cap \csm(\Til X_\Gamma)
=\int \frac {t\,D}{1-t\,h} \cap \csm(\Til X_\Gamma)\quad.
\]
here we have replaced $E$ by $D$ (since $E=D\cap \Til X$,
and in particular $D$ restricts to the class of $E$ on $\Til X_\Gamma$),
and $\pi^* h$ with the corresponding hyperplane class $h$ on $D\cong \Pbb^{n-2}$.

The class $D\cap \csm(\Til X_\Gamma)$ is supported on 
$E\cong X_{\Gamma\smallsetminus e}$. Without further information on $e$
and $\Gamma$, it does not seem possible to express this class in more
intelligible terms.

\begin{lemma}\label{tech3}
Assume $(\Gamma,e)$ satisfies conditions~I and~II. Then
\[
\int \frac {t\,D}{1-t\,h} \cap \csm(\Til X_\Gamma)=
\chi(X_{\Gamma\smallsetminus e})+(t-1)\sum_{i\ge 0} t^i\int h^i
\cap \csm(X_{\Gamma\smallsetminus e})
\]
\end{lemma}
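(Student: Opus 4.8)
The plan is to use Corollary~\ref{maincor} to rewrite the integrand in terms of $\csm(E)$, and then transport everything to $E\cong X_{\Gamma\smallsetminus e}$, where the remaining computation becomes a formal manipulation of power series in~$t$.

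First I would observe that, since $E=D\cap\Til X_\Gamma$ is a Cartier divisor on $\Til X_\Gamma$ with class $D$, I may multiply the identity of Corollary~\ref{maincor},
\[
\csm(E)=\frac{D}{1+D}\cap\csm(\Til X_\Gamma),
\]
by $(1+D)$ (a unit in the operational sense, as $D$ is nilpotent) to obtain $D\cap\csm(\Til X_\Gamma)=(1+D)\cap\csm(E)$, a class supported on $E$. This is the one place where conditions~I and~II enter the argument, via Corollary~\ref{maincor}; everything that follows is geometric and formal. Factoring $\frac{tD}{1-th}=\frac{t}{1-th}\cdot D$ then yields
\[
\frac{tD}{1-th}\cap\csm(\Til X_\Gamma)=\frac{t}{1-th}\cap(1+D)\cap\csm(E).
\]

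Next I would compute the relevant divisor classes after restriction to $E$. As $D$ is the exceptional divisor of the blow-up of $\Pbb^{n-1}$ at $p$, its normal bundle in $\Til\Pbb^{n-1}$ is $\cO_D(-1)$ with $D\cong\Pbb^{n-2}$; restricting to $E\subseteq D$ gives $D|_E=-h$, where $h$ is the hyperplane class of $\Pbb^{n-2}$. By the self-intersection formula, $D$ therefore acts as $-h$ on any class supported on $E$, so $(1+D)\cap\csm(E)=(1-h)\cap\csm(E)$. Moreover $\pi$ restricts to the identification $E\cong X_{\Gamma\smallsetminus e}\subseteq\Pbb^{n-2}$ recalled in~\S\ref{gende}, so $\pi^*h$ restricts to the hyperplane class $h$ on $E$. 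Combining these gives, as classes on $E\cong X_{\Gamma\smallsetminus e}$,
\[
\frac{tD}{1-th}\cap\csm(\Til X_\Gamma)=\frac{t(1-h)}{1-th}\cap\csm(X_{\Gamma\smallsetminus e}).
\]

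Finally I would take degrees and expand. Writing $a_i=\int h^i\cap\csm(X_{\Gamma\smallsetminus e})$, so that $a_0=\chi(X_{\Gamma\smallsetminus e})$ since the degree of a CSM class is the Euler characteristic, I would expand $\frac{t(1-h)}{1-th}=t(1-h)\sum_{j\ge0}t^jh^j$ and collect terms to obtain $\sum_{m\ge1}t^m(a_{m-1}-a_m)$. The same coefficient-by-coefficient bookkeeping applied to the right-hand side $\chi(X_{\Gamma\smallsetminus e})+(t-1)\sum_{i\ge0}t^ia_i=a_0+\sum_{i\ge0}(t^{i+1}-t^i)a_i$ produces the identical series, completing the proof. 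The only genuine subtlety is the restriction bookkeeping of the previous paragraph, namely correctly identifying $D|_E=-h$ and $\pi^*h|_E=h$ so that the substitution of $-h$ for $D$ is justified; once this is in place, the degree computation is purely formal.
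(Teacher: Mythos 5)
Your proposal is correct and follows essentially the same route as the paper: both invoke Corollary~\ref{maincor} to write $D\cap\csm(\Til X_\Gamma)=(1+D)\cap\csm(E)=(1-h)\cap\csm(X_{\Gamma\smallsetminus e})$ using the identification $E\cong X_{\Gamma\smallsetminus e}$ and the fact that $D$ restricts to $\cO(-1)$, and then finish by a formal degree computation. The only (immaterial) difference is that the paper closes with the rational-function identity $\frac{t(1-h)}{1-th}=1+\frac{t-1}{1-th}$ where you expand and match coefficients term by term.
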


\begin{proof}
By Corollary~\ref{maincor},
\[
D\cap \csm(\Til X_\Gamma)=(1+D)\cap \csm(E)=(1-h)\cap 
\csm(X_{\Gamma\smallsetminus e})\quad:
\]
here we have identified $E\subseteq D$ with $X_{\Gamma\smallsetminus e}
\subseteq \Pbb^{n-2}$, and used the fact that the class of the exceptional
divisor $D$ restricts to $\cO(-1)$. The statement is obtained by applying 
mindless manipulations (and noting $\int \csm(X_{\Gamma\smallsetminus e})
=\chi(X_{\Gamma\smallsetminus e}))$:
\begin{align*}
\int \frac {t\,D}{1-t\,h} \cap \csm(\Til X_\Gamma)
&=\int \frac{t(1-h)}{1-t\,h}\cap \csm(X_{\Gamma\smallsetminus e})
=\int \left(1+\frac{(t-1)}{1-t\,h}\right)\cap \csm(X_{\Gamma\smallsetminus e})\\
&=\chi(X_{\Gamma\smallsetminus e})+(t-1)\int\frac 1{1-t\,h}
\cap \csm(X_{\Gamma\smallsetminus e})
\end{align*}
with the stated result.
\end{proof}

\subsection{}
Collecting what we have proved at this point:
\begin{prop}\label{delconprop}
Let $\Gamma$ be a graph with $n$ edges, and let $e$ be a regular edge of
$\Gamma$. 
Assume $(\Gamma,e)$ satisfies the conditions given in~\S\ref{condis}.
Then
\begin{multline*}
\sum_{i\ge 0} t^i \int H^i\cap \csm(X_\Gamma)
=\frac{(1+t)^{n-1}+(t-1)}t\\
+\sum_{i\ge 0} t^i\int h^i\cap \csm(X_{\Gamma\smallsetminus e}\cap X_{\Gamma/e})
+(t-1)\sum_{i\ge 0} t^i\int h^i \cap \csm(X_{\Gamma\smallsetminus e})
\end{multline*}
\end{prop}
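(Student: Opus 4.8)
The plan is to assemble the statement directly from the three technical lemmas already in place, since Proposition~\ref{delconprop} is a collecting result rather than a genuinely new computation. The starting point is the combination of Lemma~\ref{tech1} and Lemma~\ref{tech2}, valid for any regular edge $e$, which expresses $\sum_{i\ge 0} t^i \int H^i\cap \csm(X_\Gamma)$ as a sum of four contributions: the constant $1-\chi(X_{\Gamma\smallsetminus e})$, the polynomial $\frac{(1+t)^{n-1}-1}{t}$, the intersection term $\sum_{i\ge 0} t^i\int h^i\cap \csm(X_{\Gamma\smallsetminus e}\cap X_{\Gamma/e})$, and the residual term $\int \frac{t\,E}{1-t\,\pi^*h}\cap \csm(\Til X_\Gamma)$. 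I would reproduce this combined formula as the first line of the argument, noting explicitly that conditions~I and~II have played no role so far.

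The only place those conditions enter is in the evaluation of the residual term. First I would record the harmless identity $\int \frac{t\,E}{1-t\,\pi^*h}\cap \csm(\Til X_\Gamma) = \int \frac{t\,D}{1-t\,h}\cap \csm(\Til X_\Gamma)$, which holds because $E=D\cap \Til X_\Gamma$, so $D$ restricts to the class $E$ on $\Til X_\Gamma$, and because $\pi^*h$ agrees with the hyperplane class $h$ on $D\cong \Pbb^{n-2}$. Having made this replacement, I would invoke Lemma~\ref{tech3}, whose hypotheses are precisely conditions~I and~II (via Corollary~\ref{maincor}, and ultimately Theorem~\ref{adj}), to rewrite the residual term as $\chi(X_{\Gamma\smallsetminus e})+(t-1)\sum_{i\ge 0} t^i\int h^i\cap \csm(X_{\Gamma\smallsetminus e})$.

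What then remains is pure bookkeeping. Substituting the output of Lemma~\ref{tech3} into the four-term formula, the summand $-\chi(X_{\Gamma\smallsetminus e})$ coming from Lemma~\ref{tech1} cancels against the summand $+\chi(X_{\Gamma\smallsetminus e})$ coming from Lemma~\ref{tech3}. The surviving constant contribution is $1+\frac{(1+t)^{n-1}-1}{t}$, which I would simplify by clearing the denominator to $\frac{(1+t)^{n-1}+(t-1)}{t}$, matching the first term of the proposition. The intersection summation and the $(t-1)$-weighted summation carry through unchanged, yielding exactly the three-term right-hand side claimed.

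Since all of the substance has already been extracted in Lemmas~\ref{tech1}--\ref{tech3} and Corollary~\ref{maincor}, I do not expect a genuine obstacle here; the proof is essentially an exercise in substitution. The only points deserving a moment's care are the replacement of $E$ and $\pi^*h$ by $D$ and $h$ in the residual term, and the elementary cancellation of the two Euler-characteristic contributions against one another. The entire conceptual weight of the result sits upstream, in the transversality input encoded by conditions~I and~II and transmitted through Lemma~\ref{tech3}.
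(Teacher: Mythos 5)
Your proposal is correct and follows exactly the paper's own route: the paper likewise obtains Proposition~\ref{delconprop} by combining Lemmas~\ref{tech1} and~\ref{tech2} (valid for any regular edge), replacing $E$ and $\pi^*h$ by $D$ and $h$, invoking Lemma~\ref{tech3} where conditions~I and~II enter, and cancelling the two $\chi(X_{\Gamma\smallsetminus e})$ terms to arrive at $\frac{(1+t)^{n-1}+(t-1)}{t}$. Nothing is missing.
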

This statement improves considerably once it is expressed in terms of the `polynomial 
Feynman rules' introduced in \cite{feynman}, \S3; we take this as a further indication
that the polynomial captures interesting information about $\Gamma$.
The polynomial essentially evaluates the CSM class of the {\em complement\/}
of $X_\Gamma$ in projective space. We will denote the polynomial 
corresponding to $\Gamma$ by $C_{X_\Gamma}(t)$, since it depends directly on 
the graph hypersurface (this makes it an {\em algebro-geometric\/} Feynman rule),
and since it can be defined for any subset of projective space.

\begin{lemma}\label{poly}
If $\Gamma$ is not a forest and has $n$ edges, then
\[
C_{X_\Gamma}(t)=(1+t)^n-1-\sum_{i\ge 0} t^{i+1} \int H^i\cap \csm(X_\Gamma)\quad.
\]
\end{lemma}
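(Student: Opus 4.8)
The plan is to derive the formula directly from the definition of $C_{X_\Gamma}(t)$ recalled in the introduction, together with the additivity of CSM classes; this amounts to a bookkeeping computation translating between the degrees $\int H^i\cap\csm(X_\Gamma)$ and the coefficients of the polynomial, with no further geometric input required.

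First I would unwind the defining relation $\csm(\one_{\Pbb^{n-1}\smallsetminus X_\Gamma})=(H^n C_{X_\Gamma}(1/H))\cap[\Pbb^{n-1}]$ into an explicit ``degree-extraction'' formula. For any class $\alpha\in A_*\Pbb^{n-1}$ written in the basis $\{[\Pbb^k]\}$, the coefficient of $[\Pbb^i]$ is exactly $\int H^i\cap\alpha$, since $H^i\cap[\Pbb^k]=[\Pbb^{k-i}]$ and $\int[\Pbb^j]=0$ unless $j=0$. On the other hand, expanding $C_{X_\Gamma}(t)=\sum_j a_j t^j$ gives $(H^n C_{X_\Gamma}(1/H))\cap[\Pbb^{n-1}]=\sum_j a_j[\Pbb^{j-1}]$, so the coefficient of $[\Pbb^i]$ equals $a_{i+1}$. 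Matching the two descriptions yields
\[
C_{X_\Gamma}(t)=\sum_{i\ge 0}\left(\int H^i\cap\csm(\one_{\Pbb^{n-1}\smallsetminus X_\Gamma})\right)t^{i+1},
\]
exhibiting the polynomial as the generating function of the degrees of $\csm$ of the complement, shifted by one in the exponent.

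Next I would apply additivity of CSM classes to the decomposition $\one_{\Pbb^{n-1}}=\one_{X_\Gamma}+\one_{\Pbb^{n-1}\smallsetminus X_\Gamma}$, obtaining $\csm(\one_{\Pbb^{n-1}\smallsetminus X_\Gamma})=\csm(\Pbb^{n-1})-\csm(X_\Gamma)$. Since $\csm(\Pbb^{n-1})=c(T\Pbb^{n-1})\cap[\Pbb^{n-1}]=(1+H)^n\cap[\Pbb^{n-1}]$, its degree data are $\int H^i\cap\csm(\Pbb^{n-1})=\binom{n}{i+1}$. Substituting into the expression above gives
\[
C_{X_\Gamma}(t)=\sum_{i\ge 0}\binom{n}{i+1}t^{i+1}-\sum_{i\ge 0}\left(\int H^i\cap\csm(X_\Gamma)\right)t^{i+1},
\]
and reindexing $m=i+1$ collapses the first sum to $\sum_{m\ge 1}\binom{n}{m}t^m=(1+t)^n-1$, which is precisely the claimed formula.

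The computation is essentially routine; the only point requiring care is the index shift $i\mapsto i+1$ built into the $H^n C(1/H)$ normalization, which accounts both for the $-1$ in $(1+t)^n-1$ and for the absence of a constant term in $C_{X_\Gamma}(t)$. This is also where the hypothesis that $\Gamma$ is not a forest enters: it guarantees that $X_\Gamma$ is a genuine hypersurface of dimension $n-2$, so that $\csm(X_\Gamma)$ carries no $[\Pbb^{n-1}]$-component and the defining relation for $C_{X_\Gamma}(t)$ applies in the stated form. I expect no genuine obstacle beyond keeping these normalization conventions consistent throughout.
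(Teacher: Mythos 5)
Your proposal is correct and follows essentially the same route as the paper, which simply cites Proposition~3.7 of \cite{feynman} (the relation $\csm(\one_{\Pbb^{n-1}\smallsetminus X_\Gamma})=\big(H^n C_{X_\Gamma}(1/H)\big)\cap[\Pbb^{n-1}]$) and says the lemma follows by ``simple manipulations''; you have carried out exactly those manipulations, namely extracting coefficients, applying additivity of CSM classes, and expanding $\csm(\Pbb^{n-1})=(1+H)^n\cap[\Pbb^{n-1}]$. Your closing observation correctly identifies where the non-forest hypothesis enters (it ensures the defining relation applies and the constant term vanishes).
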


This is obtained from Proposition~3.7 of \cite{feynman}, by applying simple
manipulations. If $\Gamma$ is a forest, then $X_\Gamma$ is empty, and the 
corresponding polynomial
is a power of $(t+1)$, cf.~Proposition~3.1 in~\cite{feynman}.

Consistently with the expression in Lemma~\ref{poly}, we set
\[
C_{Z}(t)=(1+t)^{n-1}-1-\sum_{i\ge 0} t^{i+1} \int h^i\cap \csm(Z)
\]
for every nonempty subscheme $Z\subseteq \Pbb^{n-2}$, where $h$ denotes the 
hyperplane class. Note that $C_Z(t)$ depends on the dimension of the space 
containing $Z$; this is always clear from the context. If $Z\subset \Pbb^{n-1}$ is 
empty, we set $C_Z(t)=(1+t)^{n-1}$.

\begin{theorem}[Deletion-contraction]\label{delconthm}
Let $e$ be a regular edge of $\Gamma$. Assume $(\Gamma,e)$ satisfies 
both conditions~I and~II given in~\S\ref{condis}. Then
\[
C_{X_\Gamma}(t)=C_{X_{\Gamma\smallsetminus e}\cap X_{\Gamma/e}}(t)
+(t-1)\, C_{X_{\Gamma\smallsetminus e}}(t)\quad.
\]
\end{theorem}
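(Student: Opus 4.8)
The plan is to derive the theorem from Proposition~\ref{delconprop} and Lemma~\ref{poly} by a direct substitution, the only real content being a careful bookkeeping of constant terms. Write $S_\Gamma = \sum_{i\ge 0} t^i \int H^i\cap \csm(X_\Gamma)$ for the generating polynomial on the left of Proposition~\ref{delconprop}, and set $Z_1 = X_{\Gamma\smallsetminus e}\cap X_{\Gamma/e}$ and $Z_2 = X_{\Gamma\smallsetminus e}$, both regarded as subschemes of $\Pbb^{n-2}$. Multiplying the identity of Proposition~\ref{delconprop} through by $t$ expresses $t\,S_\Gamma$ as $(1+t)^{n-1}+(t-1)$ plus $\sum_{i\ge0} t^{i+1}\int h^i\cap\csm(Z_1)$ plus $(t-1)\sum_{i\ge0} t^{i+1}\int h^i\cap\csm(Z_2)$.

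Before substituting I would record two facts ensuring that the \emph{nonempty} form of the definition of $C_Z(t)$ applies to both $Z_1$ and $Z_2$. First, since $e$ is regular, $\Gamma\smallsetminus e$ is not a forest, hence $\Gamma$ is not a forest and Lemma~\ref{poly} is available; moreover $X_{\Gamma\smallsetminus e}$ is a genuine hypersurface, so $Z_2\neq\emptyset$. Second, contracting a non-looping edge preserves the first Betti number, $b_1(\Gamma/e)=b_1(\Gamma)\ge 1$, so $\Gamma/e$ is also not a forest and $X_{\Gamma/e}$ is a genuine hypersurface; since two nonempty hypersurfaces of $\Pbb^{n-2}$ always meet when $n\ge 3$, the intersection $Z_1$ is nonempty as well. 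The few degenerate small cases can be checked directly against the forest conventions.

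With nonemptiness in hand, the definition of $C_Z(t)$ gives $\sum_{i\ge 0} t^{i+1}\int h^i\cap\csm(Z_j) = (1+t)^{n-1}-1-C_{Z_j}(t)$ for $j=1,2$. Substituting these into the expression for $t\,S_\Gamma$, and then into $C_{X_\Gamma}(t)=(1+t)^n-1-t\,S_\Gamma$ from Lemma~\ref{poly}, I would collect terms: the three $(1+t)^{n-1}$ contributions combine with coefficients $1$, $1$, $t-1$ to $(1+t)^n$, the constants collapse to $-1$, and the surviving characteristic-class terms are exactly $-C_{Z_1}(t)-(t-1)C_{Z_2}(t)$. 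Thus $t\,S_\Gamma=(1+t)^n-1-C_{Z_1}(t)-(t-1)C_{Z_2}(t)$, and the $(1+t)^n-1$ cancels against the same expression in Lemma~\ref{poly}, leaving precisely $C_{X_\Gamma}(t)=C_{Z_1}(t)+(t-1)\,C_{Z_2}(t)$, as claimed.

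The computation is entirely mechanical once the inputs are assembled; the single point demanding care — and the only place the argument could break — is the constant-term bookkeeping. Because the definition of $C_Z$ carries the isolated $-1$ while the empty-scheme convention replaces it by $0$, an erroneous emptiness claim for $Z_1$ or $Z_2$ would shift the final identity by a constant. The nonemptiness reductions of the second paragraph are exactly what guarantee the clean cancellation, so I expect establishing those carefully, rather than the algebra itself, to be the crux.
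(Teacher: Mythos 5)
Your derivation is correct and is essentially the paper's own proof: the paper disposes of Theorem~\ref{delconthm} in a single line as ``the form taken by the formula in Proposition~\ref{delconprop}, once it is written using the notation recalled above,'' and your substitution via Lemma~\ref{poly} is exactly that bookkeeping, carried out explicitly. The only slip is in your nonemptiness check --- two hypersurfaces of $\Pbb^{n-2}$ are only forced to meet for $n\ge 4$, and for $n=3$ (e.g.\ the $3$-banana) the intersection $X_{\Gamma\smallsetminus e}\cap X_{\Gamma/e}$ is genuinely empty --- but since the identity of Proposition~\ref{delconprop} and your algebra go through verbatim with $\csm(\emptyset)=0$ (i.e.\ taking $C_{Z_1}(t)=(1+t)^{n-1}-1$ in that degenerate case), this does not affect the argument.
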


This is the form taken by the formula in Proposition~\ref{delconprop}, once it is 
written using the notation recalled above. On the right-hand side, both 
$X_{\Gamma\smallsetminus e}$ and $X_{\Gamma\smallsetminus e}
\cap X_{\Gamma/e}$ are viewed as subschemes of $\Pbb^{n-2}$.

The deletion-contraction formula of Theorem~\ref{delconthm} holds also if
$\Gamma\smallsetminus e$ is a forest, provided that 
$C_{X_{\Gamma\smallsetminus e}\cap X_{\Gamma/e}}(t)$ and 
$C_{X_{\Gamma\smallsetminus e}}(t)$ are both taken to equal $(t+1)^{n-1}$.

\begin{remark}\label{myst}
Differentiating the formula in Theorem~\ref{delconprop} and setting $t$ to $0$
gives
\[
C'_{X_\Gamma}(0)=C'_{X_{\Gamma\smallsetminus e}\cap X_{\Gamma/e}}(0)
+C_{X_{\Gamma\smallsetminus e}}(0)-C'_{X_{\Gamma\smallsetminus e}}(0)\quad.
\]
The value of the derivative at $0$ equals the Euler characteristic of the 
complement (\cite{feynman}, Proposition~3.1); and as $\Gamma\smallsetminus e$ 
is not a forest, then $C_{X_{\Gamma\smallsetminus e}}(0)=0$. In this case,
\[
n-\chi(X_\Gamma)=n-1-\chi(X_{\Gamma\smallsetminus e}\cap X_{\Gamma/e})
-n+1+\chi(X_{\Gamma\smallsetminus e})\quad,
\]
i.e.,
\[
\chi(X_\Gamma)=n+\chi(X_{\Gamma\smallsetminus e}\cap X_{\Gamma/e})
-\chi(X_{\Gamma\smallsetminus e})\quad.
\]
Remarkably, this formula holds as soon as $e$ is a regular edge on $\Gamma$,
as verified in \cite{delecon}, (3.20). In fact ({\em loc.~cit.,\/} Theorem~3.8) this
formula follows from an analogous formula at the level of Grothendieck classes
which holds if $e$ is a regular edge on~$\Gamma$, regardless of whether
conditions~I and~II are verified. We find it very mysterious that
these conditions should affect the CSM classes
involved in the deletion-contraction formula, but not affect their zero-dimensional 
terms.
\qede
\end{remark}

\subsection{}
While the argument proving the deletion-contraction formula requires the technical
conditions~I and~II to hold for $(\Gamma,e)$, there could be a legitimate doubt
that the formula itself may hold for more general edges; after all, deletion-contraction
for Grothendieck classes does hold in a more general situation (cf.~Remark~\ref{myst}).
The example that follows shows that the formula does not necessarily hold if the
second condition fails.

\begin{example}\label{IIfails}
Condition~II fails for the graph
\begin{center}
\includegraphics[scale=.5]{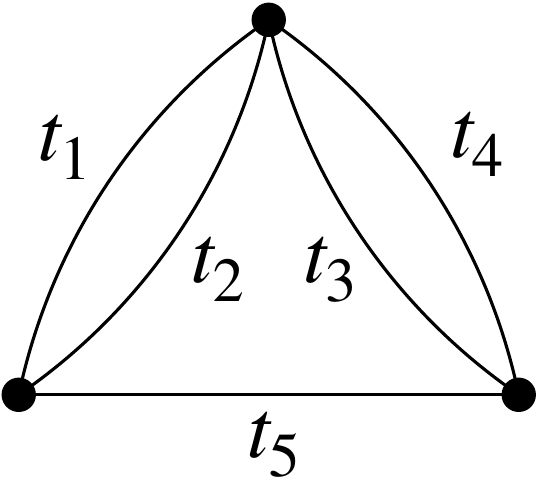}
\end{center}
with respect to the bottom edge $e$ (Example~\ref{cIIexample}; also cf.~\S\ref{disjoinable}). 
Labeling the edges by coordinates as indicated, the graph polynomial is
\[
\Psi_\Gamma= \quad
t_5 (t_1+t_2)(t_3+t_4) + (t_1 t_2 t_3 + t_1 t_2 t_4 + t_1 t_3 t_4 + t_2 t_3 t_4)\quad.
\]
The corresponding hypersurface $X_\Gamma$ is singular along two nonsingular
conics meeting at the point $p=(0 : 0 : 0 : 0 : 1)$. The blow-up $\Til X_\Gamma$
is singular along the proper transforms of these two conics, and along a curve
contained in the exceptional divisor. As the exceptional divisor contains a component
of $\partial \Til X_\Gamma$, it is clear that condition~II is not satisfied, 
cf.~Claim~\ref{condIIex}. It is equally straightforward to verify that condition~I {\em is}
satisfied in this case.

Since $X_\Gamma$ is 
nonsingular in codimension~$1$, its codimension-$0$ and $1$ terms must 
agree with the Chern class of its virtual tangent bundle, i.e., with the class for
a nonsingular hypersurface of degree~$3$ in $\Pbb^4$:
\[
\csm(X_\Gamma)=3[\Pbb^3]+6[\Pbb^2]+\dots\quad.
\]
This observation suffices to determine
\[
C_{X_\Gamma}(t)=t^5+2\,t^4+\underline{4}\,t^3+\text{l.o.t.}\quad.
\]
Deletion and contraction:
\begin{center}
\includegraphics[scale=.5]{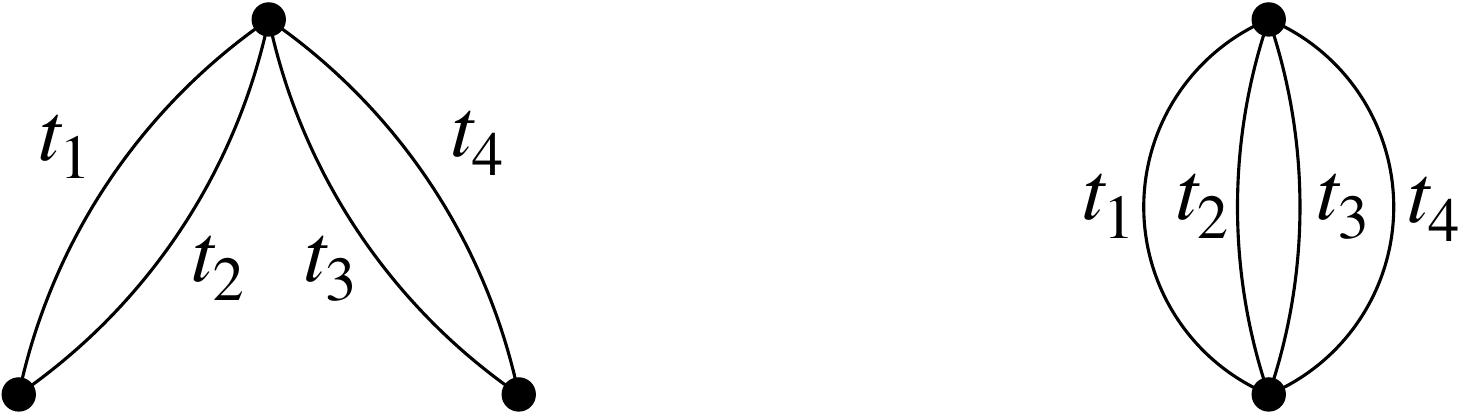}
\end{center}
have polynomials
\begin{align*}
\Psi_{\Gamma\smallsetminus e} &=\quad (t_1+t_2)(t_3+t_4) \\
\Psi_{\Gamma/ e} &=\quad t_1 t_2 t_3 + t_1 t_2 t_4 + t_1 t_3 t_4 + t_2 t_3 t_4\quad.
\end{align*}
We have
\[
C_{X_{\Gamma\smallsetminus e}}(t) = t^2(t+1)^2\quad:
\]
this is easy to obtain directly, as $X_{\Gamma\smallsetminus e}$ consists of the union
of two planes in $\Pbb^3$; and it also follows from the formularium in Proposition~3.1 
of~\cite{feynman}, and Theorem~3.6 of~{\em loc.~cit.\/}
As for $X_{\Gamma\smallsetminus e}\cap X_{\Gamma/e}$, this is easily checked
to consist of three lines in $\Pbb^3$, meeting at two points.
\begin{center}
\includegraphics[scale=.5]{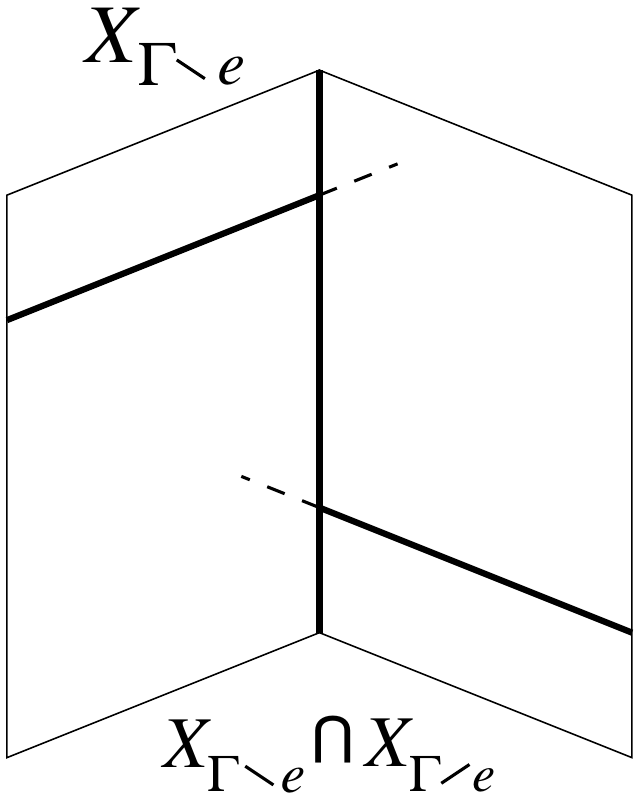}
\end{center}
It follows that $\csm(X_{\Gamma\smallsetminus e}\cap X_{\Gamma/e})=3([\Pbb^1]
+2[\Pbb^0])-2[\Pbb^0]=3[\Pbb^1] +4[\Pbb^0]$, and hence
\[
C_{X_{\Gamma\smallsetminus e}\cap X_{\Gamma/e}}(t) = t^4+4\,t^3+3\,t^2\quad.
\]
Thus, we see that
\[
C_{X_{\Gamma\smallsetminus e}\cap X_{\Gamma/e}}(t)
+(t-1) C_{X_{\Gamma\smallsetminus e}}(t) = t^5+2\, t^4 + \underline{3}\, t^3+2\, t^2
\ne C_{X_\Gamma}(t)\quad,
\]
verifying that the formula in Theorem~\ref{delconthm} need not hold if condition~II
fails.

A more thorough analysis shows that $C_{X_\Gamma}(t)=t^5+2\, t^4 + 4\, t^3+2\, t^2$, 
so that the underlined coefficient is the only discrepancy.
In fact, this may be checked by using Theorem~\ref{delconthm}, using deletion-contraction
with respect to a diagonal edge $e'$; both conditions~I and~II hold in this case by 
Lemma~\ref{mainlemma}. Deletion and contraction are:
\begin{center}
\includegraphics[scale=.5]{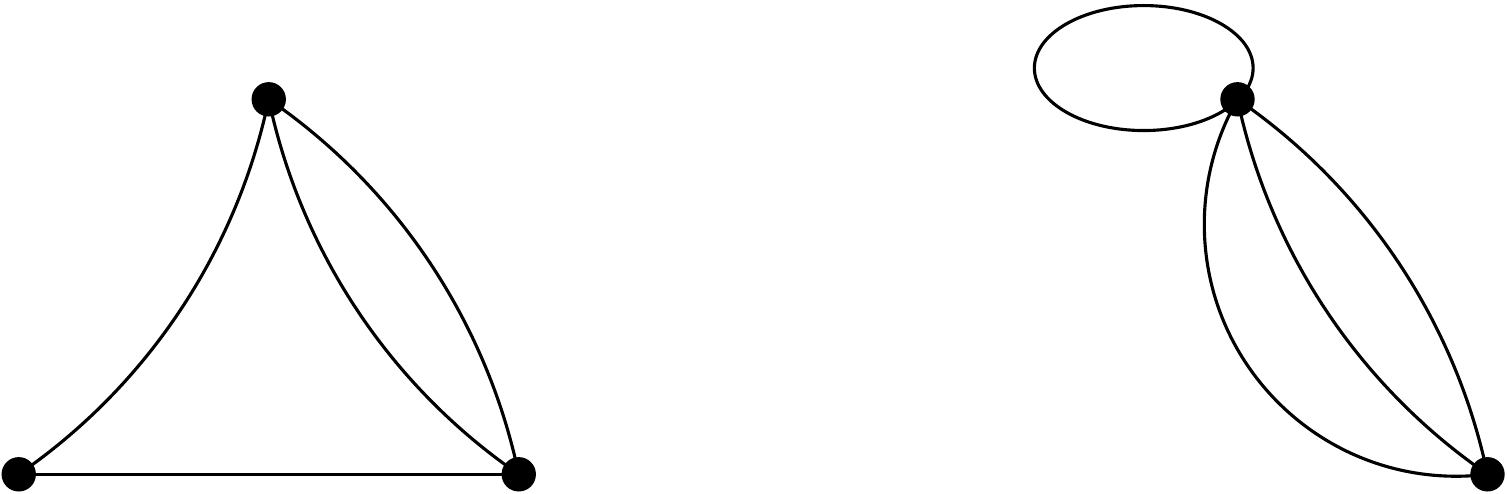}
\end{center}
The reader can check that the graph hypersurface for the deletion is a quadric
cone in $\Pbb^3$, and hence $\csm(X_{\Gamma\smallsetminus e'})
=2[\Pbb^2]+4[\Pbb^1]+3[\Pbb^0]$. The intersection $X_{\Gamma\smallsetminus e'}
\cap X_{\Gamma/e'}$ consists of the union of a nonsingular conic and a line
meeting at a point, hence $\csm(X_{\Gamma\smallsetminus e'}
\cap X_{\Gamma/e'})=3[\Pbb^1]+3[\Pbb^0]$. This yields
\[
C_{X_{\Gamma\smallsetminus e'}}(t)=t^4+2\,t^3+2\,t^2+t
\quad,\quad
C_{X_{\Gamma\smallsetminus e'}\cap X_{\Gamma/e'}}(t)=t^4+4\,t^3+3\,t^2+t
\]
and hence
\[
C_{X_\Gamma}(t)=(t^4+4\,t^3+3\,t^2+t)+(t-1)(t^4+2\,t^3+2\,t^2+t)=
t^5+2\, t^4 + 4\, t^3+2\, t^2
\]
as claimed, by Theorem~\ref{delconthm}.
\qede
\end{example}

As pointed out in Remark~\ref{InotII}, we do not know an example for which the first 
condition fails while the second one holds. We list the relevant classes in an example
for which both conditions fail.

\begin{example}\label{Ifails}
Condition~I fails for the graph
\begin{center}
\includegraphics[scale=.5]{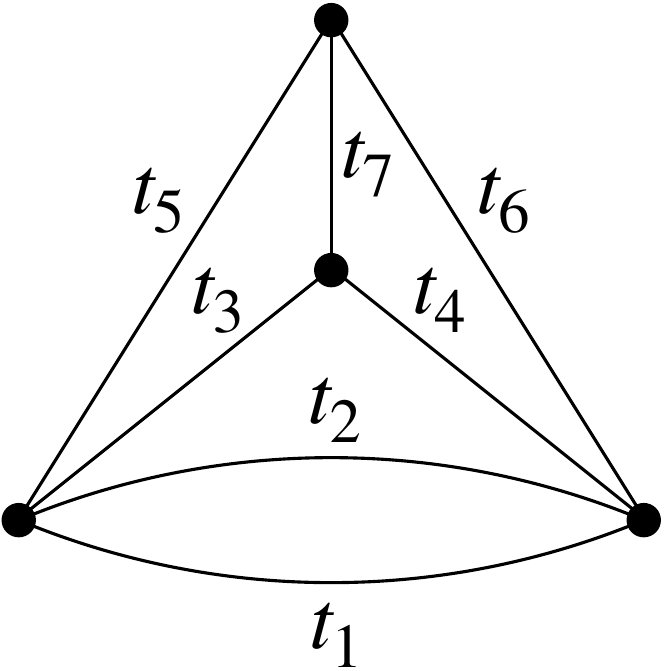}
\end{center}
with respect to the vertical edge $e$ (Example~\ref{cIexample}). Labeling the edges 
by coordinates as indicated, the graph polynomial is
\begin{multline*}
\Psi_\Gamma= \quad
t_7 t_6 t_4 t_2 + t_7 t_6 t_4 t_1 + t_7 t_6 t_3 t_2 + t_7 t_6 t_3 t_1 + t_7 t_6 t_1 t_2 
+ t_7 t_5 t_4 t_2 + t_7 t_5 t_4 t_1 + t_7 t_5 t_3 t_2 \\
+ t_7 t_5 t_3 t_1 + t_7 t_5 t_1 t_2 + t_7 t_4 t_1 t_2 + t_7 t_1 t_2 t_3 + t_6 t_5 t_4 t_2 
+ t_6 t_5 t_4 t_1 + t_6 t_5 t_3 t_2 + t_6 t_5 t_3 t_1\\ 
+ t_6 t_5 t_1 t_2 + t_6 t_4 t_3 t_2 + t_6 t_4 t_3 t_1 + t_6 t_1 t_2 t_3 + t_5 t_4 t_3 t_2 
+ t_5 t_4 t_3 t_1 + t_5 t_4 t_1 t_2 + t_1 t_2 t_3 t_4\quad,
\end{multline*}
and the corresponding $X_\Gamma$ is a hypersurface of degree~$4$ in $\Pbb^6$.
The computation of the terms needed to verify the formula in Theorem~\ref{delconthm}
for this case is more involved than in Example~\ref{IIfails}, and we omit the details.
(The Macaulay2 procedure accompanying \cite{MR1956868} was used for these 
computations.) We obtain:
\[
\csm(X_\Gamma)=4[\Pbb^5]+12[\Pbb^4]+26[\Pbb^3]+29[\Pbb^2]+21[\Pbb^1]
+7[\Pbb^0]\quad,
\]
yielding
\[
C_{X_\Gamma}(t)=t^7+3\,t^6+9\,t^5+9\,t^4+\underline{6}\,t^3\quad.
\]
As for deletion and contraction:
\begin{center}
\includegraphics[scale=.4]{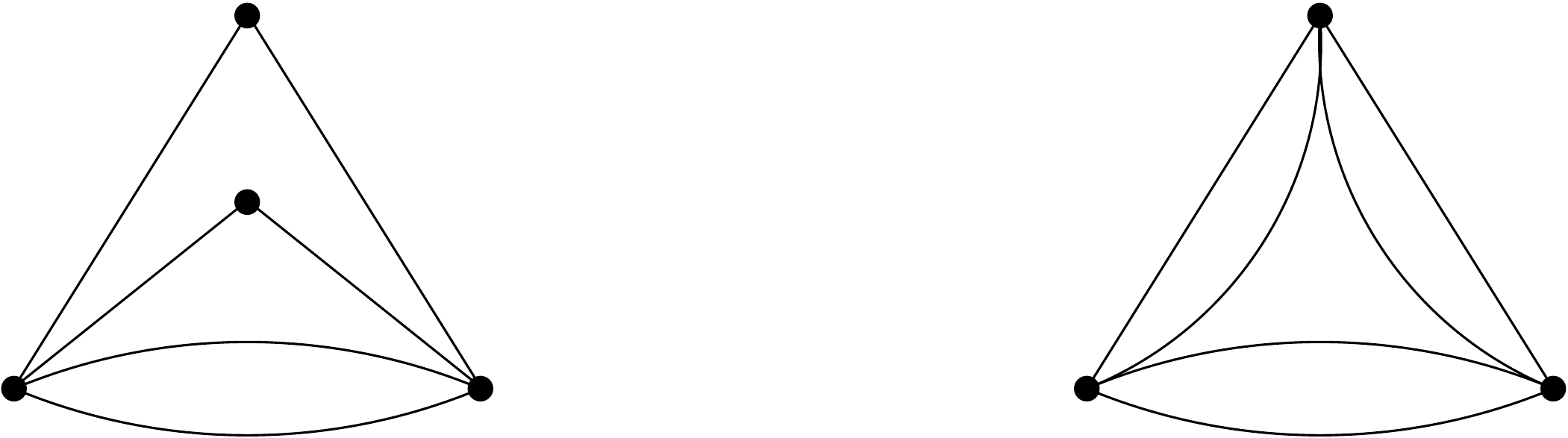}
\end{center}
\begin{gather*}
\csm(X_{\Gamma\smallsetminus e}) = 3[\Pbb^4]+9[\Pbb^3]+14[\Pbb^2]
+14[\Pbb^1]+7[\Pbb^0] \\
\csm(X_{\Gamma\smallsetminus e}\cap X_{\Gamma/e}) 
= 6[\Pbb^3]+10[\Pbb^2]+13[\Pbb^1]+7[\Pbb^0]
\end{gather*}
from which
\begin{align*}
C_{X_{\Gamma\smallsetminus e}}(t) &=t^6+3\,t^5+6\,t^4+6\,t^3+t^2-t \\
C_{X_{\Gamma\smallsetminus e}\cap X_{\Gamma/e}}(t) &=t^6 + 6\, t^5 +9\, t^4
+10\,t^3+2\,t^2-t
\end{align*}
and therefore
\[
C_{X_{\Gamma\smallsetminus e}\cap X_{\Gamma/e}}(t)
+(t-1)C_{X_{\Gamma\smallsetminus e}}(t)
=t^7+3\, t^6+9\, t^5+9\, t^4+\underline{5}\, t^3\quad.
\]
This differs from $C(X_\Gamma)$ by $t^3$, and shows that the formula
in Theorem~\ref{delconthm} need not hold if condition~I is not satisfied.
(But note that Condition~II also fails in this example.)
\qede
\end{example}

\subsection{}\label{trivca}
If $e$ is {\em not\/} a regular edge of $\Gamma$, then the corresponding 
deletion-contraction formulas are trivial consequences
of properties of $C_{X_\Gamma}(t)$ listed in \cite{feynman}, \S3 (especially
Proposition~3.1). 

\begin{itemize}
\item If $e$ is a bridge in $\Gamma$, then
\[
C_{X_\Gamma}(t)=(t+1) C_{X_{\Gamma\smallsetminus e}}(t)\quad.
\]
\item If $e$ is a looping edge in $\Gamma$, then
\[
C_{X_\Gamma}(t)=t\, C_{X_{\Gamma\smallsetminus e}}(t)\quad.
\]
\end{itemize}

If $e$ is a looping edge, then $\Gamma\smallsetminus e=\Gamma/e$, 
and hence $X_{\Gamma\smallsetminus e}\cap X_{\Gamma/e}=
X_{\Gamma\smallsetminus e}$. Thus, the formula given above matches 
the formula obtained by applying Theorem~\ref{delconthm}. 
The formula in Theorem~\ref{delconthm} is also trivially satisfied if
$\Gamma\smallsetminus e$ is a forest.

The formula for bridges has a transparent geometric explanation. If $e$
is a bridge, then $\Psi_\Gamma=\Psi_{\Gamma\smallsetminus e}$; thus 
$X_\Gamma$ is a cone over $X_{\Gamma\smallsetminus e}$, and the
formula given above follows easily from this fact.


\section{Multiple-edge formulas}\label{multedges}

\subsection{}
Deletion-contraction formulas may be used to obtain formulas for the operation of
`multiplying edges', i.e., inserting edges parallel to a given edge of a graph. 
This is carried out in \cite{delecon}, \S5, for the case of the Grothendieck class.
As in Theorem~\ref{delconthm}, the deletion-contraction formula involves a 
`non-combinatorial' term 
(the Grothendieck class of the intersection $X_{\Gamma\smallsetminus e}\cap 
X_{\Gamma/e}$). By virtue of a propitious cancellation, the resulting formula for
doubling an edge only relies on combinatorial data:
\[
\Ubb(\Gamma_{2e})=(\Tbb-1)\,\Ubb(\Gamma)+\Tbb\, \Ubb(\Gamma\smallsetminus e)
+(\Tbb+1)\, \Ubb(\Gamma/e)\quad.
\]
(\cite{delecon}, Proposition~5.2), provided $e$ is a regular edge on $\Gamma$,
and denoting by $\Gamma_{2e}$ the graph obtained by adding an edge parallel
to $e$.

In this section we show that a similar situation occurs for the CSM invariant.
Again, the deletion-contraction formula (Theorem~\ref{delconthm}) involves a
summand which we are not able to interpret directly in combinatorial terms
(that is, $C_{X_{\Gamma\smallsetminus e}\cap X_{\Gamma/e}}(t)$); and again
a fortunate cancellation leads to a purely combinatorial formula for doubling edges.
We will prove:

\begin{theorem}\label{doubled}
Let $\Gamma$ be a graph, and let $e$ be a regular edge of $\Gamma$ such that
$(\Gamma,e)$ satisfies conditions~I and~II of \S\ref{condis}. Denote by $\Gamma_{2e}$
the graph obtained by doubling the edge $e$. Then
\[
C_{\Gamma_{2e}}(t)=(2t-1)\, C_\Gamma(t)
-t(t-1)\, C_{\Gamma\smallsetminus e}(t)+C_{\Gamma/e}(t)
\]
\end{theorem}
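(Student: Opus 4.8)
The plan is to apply the deletion-contraction formula of Theorem~\ref{delconthm} to the doubled graph $\Gamma_{2e}$, taking the edge to be added. First I would observe that if $f$ denotes the new edge parallel to $e$ in $\Gamma_{2e}$, then $f$ has a parallel edge (namely $e$), so by Lemma~\ref{mainlemma} the pair $(\Gamma_{2e}, f)$ satisfies both conditions~I and~II. This is the crucial point that lets us invoke Theorem~\ref{delconthm} for $\Gamma_{2e}$ even though we only assumed the conditions for the original pair $(\Gamma, e)$. Applying the theorem gives
\[
C_{X_{\Gamma_{2e}}}(t)=C_{X_{(\Gamma_{2e})\smallsetminus f}\cap X_{(\Gamma_{2e})/f}}(t)
+(t-1)\, C_{X_{(\Gamma_{2e})\smallsetminus f}}(t)\quad.
\]

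Next I would identify the deletion and contraction of $\Gamma_{2e}$ with respect to $f$. Deleting $f$ simply recovers $\Gamma$, so $(\Gamma_{2e})\smallsetminus f=\Gamma$, giving $C_{X_{(\Gamma_{2e})\smallsetminus f}}(t)=C_\Gamma(t)$. Contracting $f$ identifies the two endpoints and merges $e$ into a looping edge, so $(\Gamma_{2e})/f$ is $\Gamma/e$ with an extra looping edge; using the looping-edge rule from \S\ref{trivca} one sees $X_{(\Gamma_{2e})/f}$ is (up to the standard cone relation) governed by $X_{\Gamma/e}$. The heart of the computation is therefore understanding the intersection $X_{(\Gamma_{2e})\smallsetminus f}\cap X_{(\Gamma_{2e})/f}=X_\Gamma\cap X_{(\Gamma_{2e})/f}$, which I would analyze through the factorization $\Psi_{\Gamma_{2e}}=t_f\Psi_\Gamma+\Psi_{\Gamma/f}$ and the relation $\Psi_{\Gamma/f}=t_e\Psi_{\Gamma\smallsetminus e}$ valid when $e$ becomes a looping edge. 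I expect this intersection term to be expressible via $C_{X_{\Gamma\smallsetminus e}\cap X_{\Gamma/e}}(t)$ together with pieces of $C_\Gamma$ and $C_{\Gamma\smallsetminus e}$.

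The key maneuver, mirroring the Grothendieck-class argument of \cite{delecon}, is that the non-combinatorial intersection term $C_{X_{\Gamma\smallsetminus e}\cap X_{\Gamma/e}}(t)$ appears in \emph{both} the deletion-contraction formula for $\Gamma$ (Theorem~\ref{delconthm}) and in the expansion just obtained for $\Gamma_{2e}$. So I would write down Theorem~\ref{delconthm} for the original pair,
\[
C_{X_\Gamma}(t)=C_{X_{\Gamma\smallsetminus e}\cap X_{\Gamma/e}}(t)
+(t-1)\, C_{X_{\Gamma\smallsetminus e}}(t)\quad,
\]
solve it for the troublesome intersection term, and substitute into the $\Gamma_{2e}$ expansion. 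The intersection term then cancels against its counterpart, leaving a formula purely in terms of $C_\Gamma$, $C_{\Gamma\smallsetminus e}$, and $C_{\Gamma/e}$. Collecting the coefficients of these three polynomials should produce exactly $(2t-1)$, $-t(t-1)$, and $1$ respectively.

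The main obstacle I anticipate is the bookkeeping in the intersection and contraction terms: one must be careful about the ambient projective dimension (the doubled graph has $n+1$ edges, so deletion and contraction of $\Gamma_{2e}$ live in $\Pbb^{n-1}$, one dimension higher than the spaces for $\Gamma\smallsetminus e$ and $\Gamma/e$), and about how the looping edge produced by contracting $f$ interacts with the definition of $C_Z(t)$, which depends on the dimension of the containing space. I would pin down precisely how $X_\Gamma\cap X_{(\Gamma_{2e})/f}$ decomposes in $\Pbb^{n-1}$ and track the shift factors of $(1+t)$ and $t$ coming from the bridge and looping-edge rules of \S\ref{trivca}. Once these normalizations are handled correctly, the cancellation and final coefficient collection are routine algebra, and the propitious cancellation that removes the non-combinatorial term is exactly what makes the resulting double-edge formula combinatorial.
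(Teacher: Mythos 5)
Your strategy is exactly the paper's: apply Theorem~\ref{delconthm} to $(\Gamma_{2e},f)$ --- legitimate by Lemma~\ref{mainlemma} since $f$ is parallel to $e$ in $\Gamma_{2e}$ --- note $(\Gamma_{2e})\smallsetminus f=\Gamma$, and cancel the non-combinatorial intersection term against the one appearing in the deletion-contraction formula for $(\Gamma,e)$. One slip to correct: since contracting $f$ turns $e$ into a looping edge attached to $\Gamma/e$, the relation is $\Psi_{(\Gamma_{2e})/f}=t_e\,\Psi_{\Gamma/e}$, \emph{not} $t_e\,\Psi_{\Gamma\smallsetminus e}$; carried through literally, your version would derail the computation. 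The piece you defer (``I expect this intersection term to be expressible\dots'') is in fact the heart of the proof: from $\Psi_\Gamma=t_e\Psi_{\Gamma\smallsetminus e}+\Psi_{\Gamma/e}$ one gets
\[
V(\Psi_\Gamma,\,t_e\Psi_{\Gamma/e})=V(t_e,\,\Psi_{\Gamma/e})\cup V(\Psi_{\Gamma\smallsetminus e},\,\Psi_{\Gamma/e})\quad,
\]
i.e.\ the union of a copy of $X_{\Gamma/e}\subseteq\Pbb^{n-2}$ and the cone over $X_{\Gamma\smallsetminus e}\cap X_{\Gamma/e}$ with vertex at $p$, these meeting in a copy of $X_{\Gamma\smallsetminus e}\cap X_{\Gamma/e}$; inclusion-exclusion and the cone rule (multiplication by $t+1$) then give
\[
C_{X_\Gamma\cap X_{(\Gamma_{2e})/f}}(t)=C_{X_{\Gamma/e}}(t)+t\,C_{X_{\Gamma\smallsetminus e}\cap X_{\Gamma/e}}(t)\quad,
\]
which involves $C_{\Gamma/e}$ rather than the ``pieces of $C_\Gamma$ and $C_{\Gamma\smallsetminus e}$'' you predicted. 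Once this is in place, your substitution-and-cancellation step goes through verbatim and produces the stated coefficients $(2t-1)$, $-t(t-1)$, and $1$.
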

\noindent (Here we write $C_\Gamma(t)$ for $C_{X_\Gamma}(t)$, etc.)

By Lemma~\ref{mainlemma}, this formula applies in particular if $e$ is already multiple 
in $\Gamma$. This fact will be used in the proof of Theorem~\ref{doubled}, and will 
lead to multiple-edge formulas in \S\ref{multiples}.

The formula also holds if $e$ is a looping edge, interpreting $C_{\Gamma/e}(t)$
to be $0$ in this case (cf.~\S\ref{trivca}).

\subsection{}
{\em Proof of Theorem~\ref{doubled}.\/}
By Theorem~\ref{delconthm}, under the hypothesis of the theorem, we have
\begin{align*}
C_{X_\Gamma}(t) &= C_{X_{\Gamma\smallsetminus e}\cap X_{\Gamma/e}}(t)
+(t-1)\, C_{X_{\Gamma\smallsetminus e}}(t)\quad, \\
C_{X_{\Gamma_{2e}}}(t) &= C_{X_\Gamma\cap X_{\Gamma_{2e}/e'}}(t)
+(t-1)\, C_{X_\Gamma}(t)\quad,
\end{align*}
where $e'$ denotes the edge parallel to $e$ in 
$\Gamma_{2e}$. Indeed, the first formula holds as $(\Gamma,e)$ satisfies
conditions~I and~II by hypothesis, and the second formula holds since
$(\Gamma_{2e},e')$ satisfies conditions~I and~II by Lemma~\ref{mainlemma};
note that $\Gamma_{2e}\smallsetminus e'=\Gamma$. The theorem will be
obtained by comparing the two intersections $X_{\Gamma\smallsetminus e}
\cap X_{\Gamma/e}\subseteq \Pbb^{n-2}$ and 
$X_\Gamma\cap X_{\Gamma_{2e}/e'}\subseteq \Pbb^{n-1}$
(where $n=$ number of edges of $\Gamma$).

Let $t_e$, $t_{e'}$ be the variables corresponding to the two parallel edges $e$, $e'$
in $\Gamma_{2e}$, and let $t_1,\dots,t_{n-1}$ be the variables corresponding to
the other edges. We have
\[
\Psi_{\Gamma_{2e}}=t_{e'} \Psi_\Gamma+ \Psi_{\Gamma_{2e}/ e'}\quad.
\]
The graph $\Gamma_{2e}/ e'$ may be obtained by attaching a 
looping edge marked $e$ at the vertex obtained by contracting $e$ in $\Gamma$:
\begin{center}
\includegraphics[scale=.5]{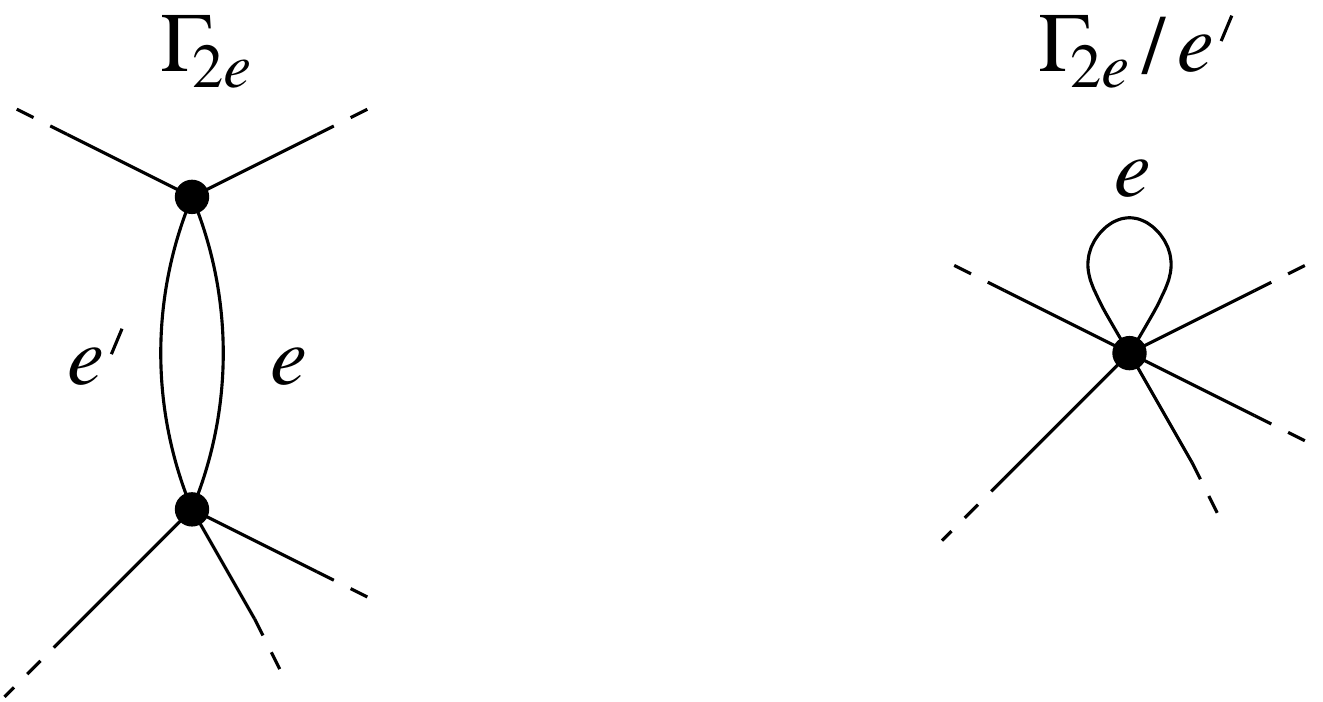}
\end{center}
As a consequence, 
\[
\Psi_{X_{\Gamma_{2e}/e'}} = t_e \Psi_{X_{\Gamma/e}}\quad,
\]
and $X_{\Gamma\smallsetminus e} \cap X_{\Gamma/e}$,  
$X_\Gamma\cap X_{\Gamma_{2e}/e'}$ have ideals
\[
\left(\Psi_{\Gamma\smallsetminus e} , \Psi_{\Gamma/e}\right)\quad,\quad
\left(\Psi_{\Gamma} , t_e \Psi_{\Gamma/e}\right)
\]
respectively. The first should be viewed as an ideal in $k[t_1,\dots,t_{n-1}]$, 
and the second as an ideal in $k[t_1,\dots,t_{n-1},t_e]$. Denoting by $V(f_1,f_2,\dots)$
the locus $f_1=f_2=\dots=0$, we have
\begin{align*}
V(\Psi_{\Gamma} , t_e \Psi_{\Gamma/e})
&=V(\Psi_{\Gamma} , t_e)\cup V(\Psi_{\Gamma} , \Psi_{\Gamma/e})
=V(\Psi_{\Gamma/e} , t_e)\cup V(t_e\Psi_{\Gamma\smallsetminus e} , \Psi_{\Gamma/e})
\intertext{
using the fact that $\Psi_\Gamma=t_e \Psi_{\Gamma\smallsetminus e}+\Psi_{\Gamma/e}$,
}
&=V(\Psi_{\Gamma/e} , t_e)\cup V(\Psi_{\Gamma\smallsetminus e} , \Psi_{\Gamma/e})
\quad.
\end{align*}
This shows that $X_\Gamma\cap X_{\Gamma_{2e}/e'}$ is the union of a copy of
$X_{\Gamma/e}\subseteq \Pbb^{n-2}$ and a cone in~$\Pbb^{n-1}$ over
$X_{\Gamma\smallsetminus e} \cap X_{\Gamma/e}\subseteq \Pbb^{n-2}$.
The intersection of these two loci is $V(\Psi_{\Gamma\smallsetminus e},
\Psi_{\Gamma/e},t_e)$, that is, a copy of 
$X_{\Gamma\smallsetminus e} \cap X_{\Gamma/e}\subseteq \Pbb^{n-2}$.

The invariant $C_-(t)$ satisfies an inclusion-exclusion property because so does the 
Chern-Schwartz-MacPherson class, and its behavior with respect to taking a cone 
amounts to multiplication by $(t+1)$: this follows easily from the formula for CSM 
classes of cones, see Proposition~5.2 in \cite{MR2504753}. Therefore,
\begin{align*}
C_{X_\Gamma\cap X_{\Gamma_{2e}/e'}}(t)
&=C_{X_{\Gamma/e}}(t)+(t+1)\, C_{X_{\Gamma\smallsetminus e} \cap X_{\Gamma/e}}(t)
-C_{X_{\Gamma\smallsetminus e} \cap X_{\Gamma/e}}(t) \\
&=C_{X_{\Gamma/e}}(t)+t\, C_{X_{\Gamma\smallsetminus e} \cap X_{\Gamma/e}}(t)
\quad.
\end{align*}
Using this fact together with the two formulas given at the beginning of the proof,
we get
\begin{align*}
C_{X_{\Gamma_{2e}}}(t)-(t-1)\, C_{X_\Gamma}(t)
&=C_{X_{\Gamma/e}}(t)+t\, C_{X_{\Gamma\smallsetminus e} \cap X_{\Gamma/e}}(t)\\
&=C_{X_{\Gamma/e}}(t)
+t\, \left(C_{X_\Gamma}(t)-(t-1)\, C_{X_{\Gamma\smallsetminus e}}(t)\right)\quad,
\end{align*}
which yields immediately the stated formula.
\qed

\subsection{}\label{multiples}
By Lemma~\ref{mainlemma}, a multiple regular edge satisfies both conditions~I and~II,
and hence the doubling edge formula of Theorem~\ref{doubled} applies to it.
For a regular edge $e$ on a graph $\Gamma$, and $m\ge 1$, denote by 
$\Gamma_{me}$ the graph obtained by replacing $e$ with $m$ edges connecting 
the same vertices.
In particular, $\Gamma=\Gamma_{e}$.

\begin{theorem}\label{goodform}
Let $e$ be a regular edge of $\Gamma$. With notation as above,
\[
\sum_{m\ge 0} C_{\Gamma_{(m+1)e}}(t)\,\frac{s^m}{m!}
=e^{ts}\left(K(t)\, C_{\Gamma}(t)-K'(t)\, C_{\Gamma_{2e}}(t)+\frac{K''(t)}2\,
C_{\Gamma_{3e}}(t)\right)\quad,
\]
where
\[
K(t)=t^2 e^{-s} + (t-1)(ts-t-1)\quad.
\]
Equivalently,
\begin{align*}
C_{\Gamma_{(m+1)e}}(t)
&=\left(t^2\, C_{\Gamma}(t)-2t\, C_{\Gamma_{2e}}(t)+C_{\Gamma_{3e}}(t)\right) (t-1)^{m-1}\\
&-\left((t^2-1)\, C_{\Gamma}(t)-2t\, C_{\Gamma_{2e}}(t)+C_{\Gamma_{3e}}(t)\right) t^{m-1}\\
&+\left((t^2-t)\, C_{\Gamma}(t)-(2t-1)\, C_{\Gamma_{2e}}(t)+C_{\Gamma_{3e}}(t)\right) 
(m-1)\,t^{m-2}\quad.
\end{align*}
\end{theorem}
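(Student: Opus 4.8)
The plan is to turn the multiple-edge problem into a single linear recurrence in $m$ and then solve it in closed form, the two engines being Theorem~\ref{doubled} and Lemma~\ref{mainlemma}. For each fixed $m\ge 2$ I would apply the doubling formula of Theorem~\ref{doubled} to the pair $(\Gamma_{me},e)$: here $e$ already has a parallel copy, so it is a multiple (hence regular) edge of $\Gamma_{me}$ and Lemma~\ref{mainlemma} guarantees that conditions~I and~II hold \emph{for free}, with no hypothesis on the original $(\Gamma,e)$ beyond regularity. The three graphs on the right-hand side are then read off combinatorially: $(\Gamma_{me})_{2e}=\Gamma_{(m+1)e}$, $\;\Gamma_{me}\smallsetminus e=\Gamma_{(m-1)e}$, and $(\Gamma_{me})/e$ is $\Gamma/e$ with $m-1$ looping edges attached at the contracted vertex. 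Invoking the looping-edge rule of \S\ref{trivca}, which multiplies $C_{-}(t)$ by $t$ per loop, the contraction term becomes $t^{m-1}C_{\Gamma/e}(t)$. Writing $a_m:=C_{\Gamma_{me}}(t)$ and $P:=C_{\Gamma/e}(t)$, this yields, for every $m\ge 2$,
\[
a_{m+1}=(2t-1)\,a_m-t(t-1)\,a_{m-1}+t^{m-1}P\quad.
\]

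Next I would solve this inhomogeneous recurrence. Its homogeneous characteristic equation $x^2-(2t-1)x+t(t-1)=0$ has discriminant $1$, hence the two roots $t$ and $t-1$. The decisive feature is that the base $t$ of the forcing term $t^{m-1}P$ coincides with one of these roots: the problem is \emph{resonant}, so the particular solution is not a bare geometric sequence but carries an extra factor linear in the index, and one checks directly that $P\,n\,t^{n-1}$ works. The general solution is therefore
\[
a_n=\alpha\,t^n+\beta\,(t-1)^n+P\,n\,t^{n-1}\quad,
\]
valid for all $n\ge 1$ once $\alpha,\beta$ are chosen to match $a_1,a_2$ (both forms then satisfy the recurrence for $n\ge 3$ with the same data, so they agree). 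This resonance is precisely the origin of the term carrying an explicit factor of the index in the final formula.

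It remains to trade the three parameters $\alpha,\beta,P$ for the combinatorial data $a_1=C_\Gamma$, $a_2=C_{\Gamma_{2e}}$, $a_3=C_{\Gamma_{3e}}$. The $m=2$ instance of the recurrence expresses the non-combinatorial quantity $P=C_{\Gamma/e}$ through $a_1,a_2,a_3$, and the values $a_1,a_2$ then fix $\alpha,\beta$; since there are exactly three parameters and three data, the resulting $3\times 3$ linear system over $k(t)$ is invertible, and the apparent denominators $t$, $t-1$ cancel to leave a polynomial identity. This is the cancellation that makes the doubled-up class depend only on $C_\Gamma,C_{\Gamma_{2e}},C_{\Gamma_{3e}}$. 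To reach the generating-function statement I would sum the three pieces against $s^m/m!$, using $\sum_m(t-1)^m s^m/m!=e^{(t-1)s}$ and $\sum_m m\,t^{m-1}s^m/m!=s\,e^{ts}$; factoring out $e^{ts}$ and collecting, the coefficients of $C_\Gamma$, $C_{\Gamma_{2e}}$, $C_{\Gamma_{3e}}$ assemble into $K$, $-K'$ and $\tfrac12K''$, where the primes denote differentiation in $t$ of $K=t^2e^{-s}+(t-1)(ts-t-1)$, and the equivalent explicit formula is recovered by extracting the coefficient of $s^m/m!$. The main obstacle, in my view, is exactly this resonant step: because the forcing base equals a characteristic root the solution acquires the anomalous index-linear term, and keeping the $t^n$, $(t-1)^n$ and $n\,t^{n-1}$ contributions correctly separated and matched to the three initial values is where a miscount would most easily slip through, whereas setting up the recurrence—though it is the conceptual heart—is clean once Lemma~\ref{mainlemma} and the looping-edge rule are in hand.
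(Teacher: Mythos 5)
Your proposal is correct and takes essentially the same route as the paper: the paper also applies Theorem~\ref{doubled} to $\Gamma_{ne}$ for $n\ge 2$ (where Lemma~\ref{mainlemma} supplies conditions~I and~II and the looping-edge rule gives the $t^{m}C_{\Gamma/e}(t)$ forcing term), and then merely eliminates $C_{\Gamma/e}(t)$ to obtain the third-order homogeneous recurrence of Lemma~\ref{recurs}, whose characteristic polynomial $-(t-x)^2(t-x-1)$ yields exactly your basis $t^m$, $(t-1)^m$, $m\,t^{m-1}$. Your direct resonant solution of the second-order inhomogeneous recurrence is the same computation in a slightly different order, and the matching to the three initial values $C_\Gamma$, $C_{\Gamma_{2e}}$, $C_{\Gamma_{3e}}$ is the paper's ``standard methods'' step.
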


The fact that the coefficient of $C_\Gamma(t)$ determines the others
by taking derivatives is a consequence of the analogous feature displayed by
the coefficients of the basic recursion for $C_{\Gamma_{me}}$:

\begin{lemma}\label{recurs}
Let $e$ be a regular edge of $\Gamma$. Then for $m\ge 1$
\[
C_{\Gamma_{(m+3)e}}(t)=
(3t-1)\, C_{\Gamma_{(m+2)e}}(t) - (3t^2-2t)\, C_{\Gamma_{(m+1)e}}(t) 
+ (t^3-t^2)\, C_{\Gamma_{me}}(t)\quad.
\]
\end{lemma}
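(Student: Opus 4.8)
The plan is to feed the doubling formula of Theorem~\ref{doubled} into itself along the tower $\Gamma_{me}$, and then eliminate the one non-combinatorial summand that appears. Fix $m\ge 1$ and apply Theorem~\ref{doubled} to the pair $(\Gamma_{(m+1)e},e)$: here $e$ is by construction a \emph{multiple} edge (there are $m+1\ge 2$ parallel copies), and it is still regular, since $\Gamma_{(m+1)e}\smallsetminus e=\Gamma_{me}$ is not a forest whenever $\Gamma\smallsetminus e$ is not. By Lemma~\ref{mainlemma} conditions~I and~II therefore hold automatically, so Theorem~\ref{doubled} is applicable. Noting that doubling $e$ in $\Gamma_{(m+1)e}$ produces $\Gamma_{(m+2)e}$ and that $\Gamma_{(m+1)e}\smallsetminus e=\Gamma_{me}$, the only term needing analysis is the contraction $\Gamma_{(m+1)e}/e$.

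First I would identify this contraction. Contracting one of the $m+1$ parallel copies of $e$ merges its two endpoints and turns the remaining $m$ copies into looping edges at the merged vertex; the underlying graph is thus $\Gamma/e$ with $m$ loops attached. Since a looping edge multiplies the invariant by a factor of $t$ (the looping-edge rule of \S\ref{trivca}), applying it $m$ times gives $C_{\Gamma_{(m+1)e}/e}(t)=t^{m}\,C_{\Gamma/e}(t)$. Substituting into Theorem~\ref{doubled} yields, for every $m\ge 1$, the inhomogeneous second-order recursion
\[
C_{\Gamma_{(m+2)e}}(t)=(2t-1)\,C_{\Gamma_{(m+1)e}}(t)-t(t-1)\,C_{\Gamma_{me}}(t)+t^{m}\,C_{\Gamma/e}(t)\quad.
\]

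The final step is to annihilate the inhomogeneous term $t^{m}C_{\Gamma/e}(t)$. The point is that this term satisfies the trivial first-order relation $t^{m+1}C_{\Gamma/e}(t)=t\cdot t^{m}C_{\Gamma/e}(t)$, so writing the displayed recursion once at index $m+1$ and once at index $m$, multiplying the latter by $t$, and subtracting, the $C_{\Gamma/e}$-terms cancel exactly. This leaves a homogeneous relation among $C_{\Gamma_{(m+3)e}}$, $C_{\Gamma_{(m+2)e}}$, $C_{\Gamma_{(m+1)e}}$, $C_{\Gamma_{me}}$; collecting coefficients gives $(2t-1)t+t(t-1)=3t^{2}-2t$ for the middle term and $t^{2}(t-1)=t^{3}-t^{2}$ for the last, which is precisely the stated recursion, valid for $m\ge 1$.

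I expect the only genuinely delicate point to be the bookkeeping for the contraction $\Gamma_{(m+1)e}/e$ — confirming the loop structure and the resulting factor $t^{m}$, and verifying that $e$ stays regular and multiple so that Lemma~\ref{mainlemma} guarantees conditions~I and~II at every stage. Once the inhomogeneous recursion is in hand, the elimination and the coefficient computation are routine. Conceptually, what makes the argument succeed is that the summand we cannot interpret combinatorially, $C_{\Gamma/e}(t)$, enters the whole tower of doubling formulas only through the explicit geometric weight $t^{m}$; this is exactly what allows it to be eliminated by passing from a second-order to a third-order, and now purely combinatorial, recursion.
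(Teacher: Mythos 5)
Your proposal is correct and follows essentially the same route as the paper: apply Theorem~\ref{doubled} along the tower $\Gamma_{(m+1)e}$ (where Lemma~\ref{mainlemma} guarantees conditions~I and~II because $e$ is multiple), identify the contraction term as $t^m\,C_{\Gamma/e}(t)$ via the looping-edge rule, and eliminate $C_{\Gamma/e}(t)$ from the recursions at consecutive indices. The paper's proof is exactly this elimination, written by displaying the two instances of the doubling formula and cancelling the inhomogeneous term.
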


This feature reflects the fact that the characteristic polynomial
for the recursion is a function of $t-x$:
\[
x^3-(3t-1)\,x^2+t(3t-2)\,x-t^2(t-1)
=-(t-x)^2(t-x-1)\quad.
\]
This fact is intriguing, and we do not have a conceptual explanation for it.
(Note that the corresponding fact is not verified for the recursion
computing the Grothendieck class. Using (5.8) in \cite{delecon}, it is immediate
to show that
\[
\Ubb(\Gamma_{(m+3)e}) = (2\Tbb -1)\, \Ubb(\Gamma_{(m+2)e})
-\Tbb\,(\Tbb-2)\, \Ubb(\Gamma_{(m+1)e})
-\Tbb^2\, \Ubb(\Gamma_{me})\quad,
\]
with notation as in loc.~cit., if $e$ is regular in $\Gamma$. The characteristic 
polynomial factors as $(x+1)\, (\Tbb-x)^2$.)
Lemma~\ref{recurs} is an immediate consequence of Theorem~\ref{doubled}:

\begin{proof}
If $e$ is a regular edge, then $e$ satisfies conditions~I and~II in $\Gamma_{ne}$
for all $n\ge 2$, by Lemma~\ref{mainlemma}. Apply Theorem~\ref{doubled} to
obtain
\begin{align*}
C_{\Gamma_{(m+2)e}}(t) &=
(2t-1)\, C_{\Gamma_{(m+1)e}}(t) -t(t-1)\, C_{\Gamma_{me}}(t) + t^m\, C_{\Gamma/e}(t) \\
C_{\Gamma_{(m+3)e}}(t) &=
(2t-1)\, C_{\Gamma_{(m+2)e}}(t) -t(t-1)\, C_{\Gamma_{(m+1)e}}(t) 
+ t^{m+1} \,C_{\Gamma/e}(t)
\end{align*}
for $m\ge 1$.
The stated recursion follows immediately, by eliminating $C_{\Gamma/e}(t)$ from 
these expressions.
\end{proof}

Theorem~\ref{goodform} follows directly from this lemma, by standard methods.

\begin{remark}\label{multicor}
Theorem~\ref{goodform} may be rewritten in the following fashion: for
all regular edges $e$ of $\Gamma$,
\begin{multline*}
\sum_{m\ge 0} C_{\Gamma_{(m+1)e}}(t)\, \frac {s^m}{m!}
=\left(e^{ts}-e^{(t-1)s}\right) C_{\Gamma_{2e}}(t)\\
-\left((t-1)\, e^{ts}-t\, e^{(t-1)s}\right) C_{\Gamma}(t)
+t\left((s-1)\, e^{ts}+e^{(t-1)s}\right) C_{\Gamma/e}(t)\quad.
\end{multline*}
That is, $C_{\Gamma_{me}}(t)$ is given by the expression
\[
\left(C_{\Gamma_{2e}}(t)-t\,C_{\Gamma}(t)-t\,C_{\Gamma/e}(t)\right)
\left(t^{m-1}-(t-1)^{m-1}\right)
+\left(C_{\Gamma}(t)+(m-1)\, C_{\Gamma/e}(t)\right) t^{m-1}
\]
for all $m\ge 1$.
These expressions are simpler to apply than Theorem~\ref{goodform}, 
since $C_{\Gamma/e}(t)$ is usually more immediately accessible than 
$C_{\Gamma_{3e}}$.
\qede
\end{remark}

\begin{example}\label{bananaex}
The $n$-edge {\em banana graph\/} consists of $n$ edges connecting two 
distinct vertices.
\begin{center}
\includegraphics[scale=.5]{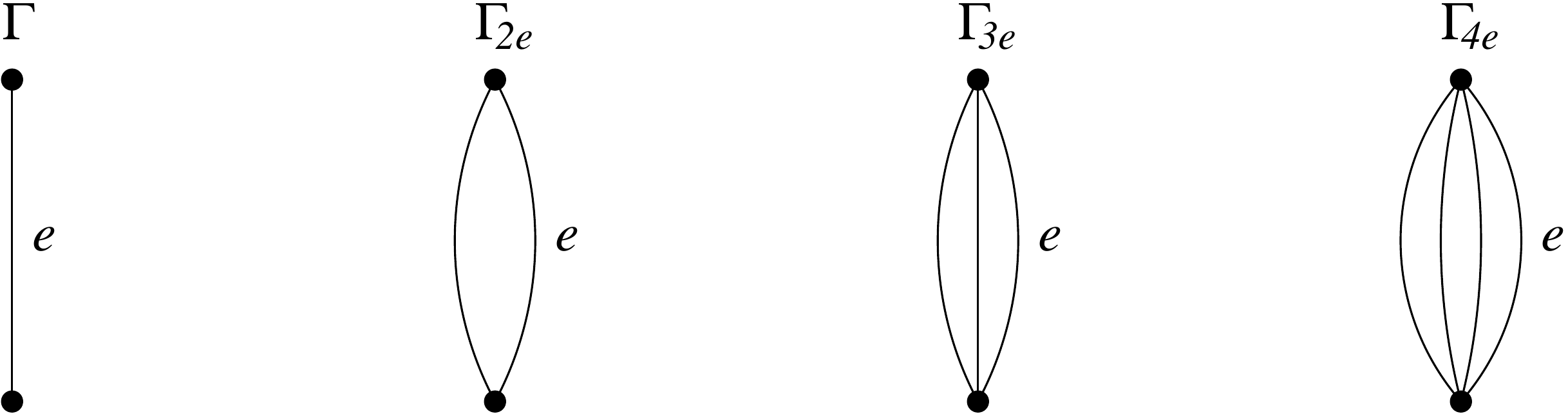}
\end{center}
We have
\[
C_\Gamma(t)=t+1\quad,\quad
C_{\Gamma_{2e}}(t)=t\,(t+1)\quad,\quad
C_{\Gamma/e}(t)=1\quad:
\]
indeed, $\Gamma$ is a single bridge, and $\Gamma_{2e}$ is a $2$-polygon
(see Proposition~3.1 in~\cite{feynman}). The formula in Remark~\ref{multicor} 
yields the following pretty generating function:
\[
\sum_{m\ge 0} C_{\Gamma_{(m+1)e}}(t)\,\frac {s^m}{m!}
=(1+ts)\,e^{ts}+t\,e^{(t-1)s}\quad,
\]
or equivalently
\[
C_{\Gamma_{ne}}(t)=n\, t^{n-1}+t\,(t-1)^{n-1}
\]
for $n\ge 1$. This agrees with the formula given in Example~3.8 of~\cite{feynman},
which was obtained by a very different method. 
\qede
\end{example}

\subsection{}
The coefficients obtained in Remark~\ref{multicor} agree 
with the ones conjectured in \cite{delecon}, \S6.2, with the difference that
that the formulas given here are applied to~$\Gamma_{2e}$ rather than
$\Gamma=\Gamma_{e}$; this accounts for the extra factor of $t$ in 
the last coefficient. The point is that the hypotheses of Theorem~\ref{doubled}
are automatically satisfied for~$\Gamma_{2e}$ since $e$ has parallel edges
in $\Gamma_{2e}$, while they are not necessarily satisfied for~$\Gamma$.
Accordingly, while it is tempting to interpret $\Gamma_{0e}$ as 
$\Gamma\smallsetminus e$, the generating function given in Remark~\ref{multicor} 
cannot be extended in general to provide information about this graph.

\begin{example}\label{doubletriangex}
We verify that Theorem~\ref{doubled} does not necessarily hold
if $e$ does not satisfy the hypotheses presented in \S\ref{condis}. 
For this, we return to the graph of Examples~\ref{cIIexample} and~\ref{IIfails}.
\begin{center}
\includegraphics[scale=.5]{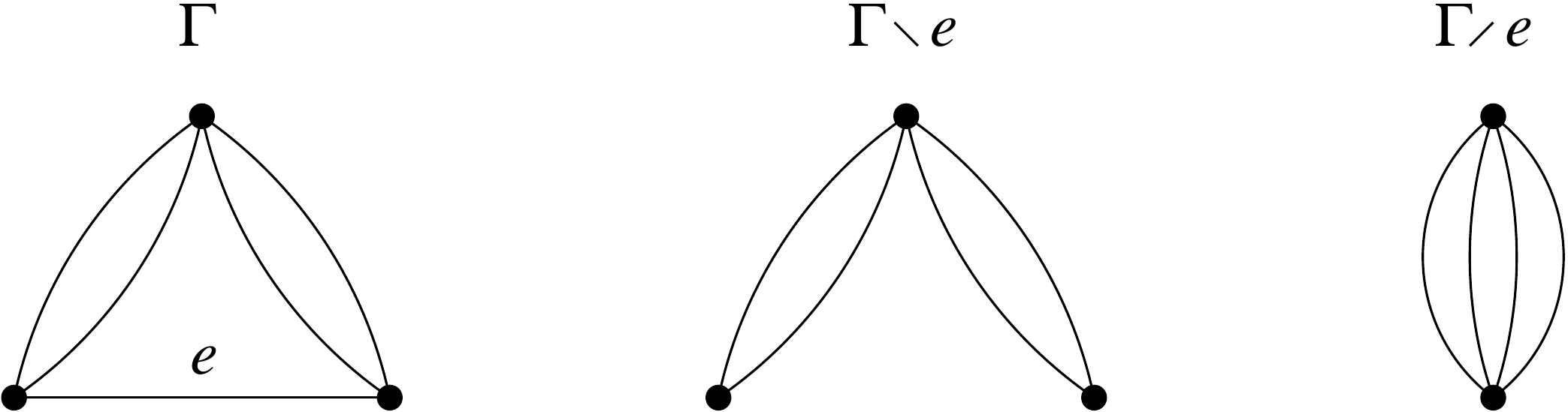}
\end{center}
We have (cf.~Examples~\ref{IIfails} and~\ref{bananaex})
\begin{align*}
C_{\Gamma}(t) &=t^5+2\, t^4+4\, t^3+2\, t^2 \\
C_{\Gamma\smallsetminus e}(t) &=(t+1)^2\, t^2 \\
C_{\Gamma/e}(t) &=4\,t^3+t(t-1)^3\quad;
\end{align*}
the formula in Theorem~\ref{doubled} would give
\begin{equation*}
\tag{*}
(2t-1)\, C_\Gamma(t)-t(t-1)\,C_{\Gamma\smallsetminus e}(t)+C_{\Gamma/e}(t)
=t^6+2\,t^5+\underline{8}\,t^4+2\,t^3+t^2-t
\end{equation*}
and we can verify that this does {\em not\/} equal $C_{\Gamma_{2e}}(t)$.
\begin{center}
\includegraphics[scale=.5]{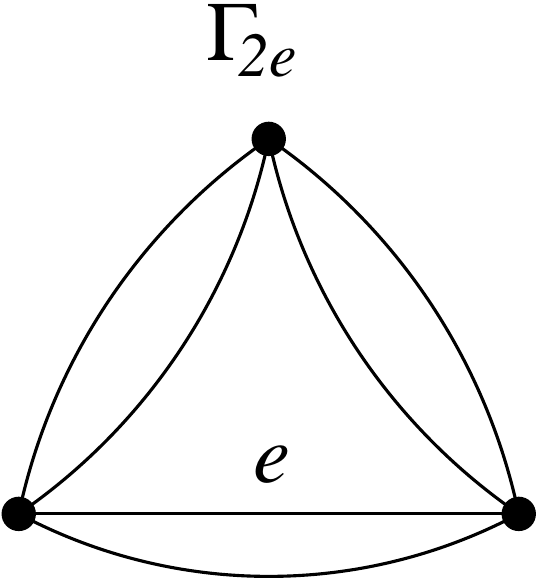}
\end{center}
Indeed, the graph polynomial for $\Gamma_{2e}$ is
\begin{multline*}
\Psi_{\Gamma_{2e}}=t_6t_5t_4t_2+t_6t_5t_4t_1+t_6t_5t_3t_2+t_6t_5t_3t_1
+t_6t_4t_3t_2+t_6t_4t_3t_1\\
+t_6t_4t_1t_2+t_6t_1t_2t_3+t_5t_4t_3t_2+t_5t_4t_3t_1+t_5t_4t_1t_2+t_5t_1t_2t_3
\end{multline*}
(where the pairs of variables $(t_1,t_2)$, $(t_3,t_4)$, $(t_5,t_6)$ correspond to the
three pairs of parallel edges).
Macaulay2 confirms that $X_{\Gamma_{2e}}$ is nonsingular in codimension~$1$,
hence
\[
\csm(X_{\Gamma_{(2)}})=4\, [\Pbb^4]+8\, [\Pbb^3]+\cdots
\]
from which
\[
C_{\Gamma_{2e}}(t)=t^6+2\,t^5+\underline{7}\,t^4+\text{l.o.t.}\quad,
\]
differing from (*).
In fact, a computation using the code from \cite{MR1956868} shows that
\[
C_{\Gamma_{2e}}(t)=t^6+2\,t^5+7\,t^4+2\,t^3+t^2-t\quad;
\]
this differs from (*) by exactly $t^4$. The corresponding CSM class is
\[
4[\Pbb^4]+8[\Pbb^3]+18[\Pbb^2]+14[\Pbb]+7[\Pbb^0]\quad,
\]
correcting the formula given at the end of~\S6 in~\cite{delecon}.
(Incidentally, the fact that in these examples
the discrepancies occurring when conditions~I and~II fail are pure powers of $t$
is also intriguing, and calls for an explanation.) 
By Remark~\ref{multicor}, we also obtain that
\[
C_{\Gamma_{me}}(t)
=(t^2-t+1)^2\,t\, (t-1)^{m-1}
+\left(4t^3+t^2+4t-1
+ (m-1)\,(t^3+t^2+3t-1)\right) t^m
\]
for all $m\ge 1$.
\begin{center}
\includegraphics[scale=.5]{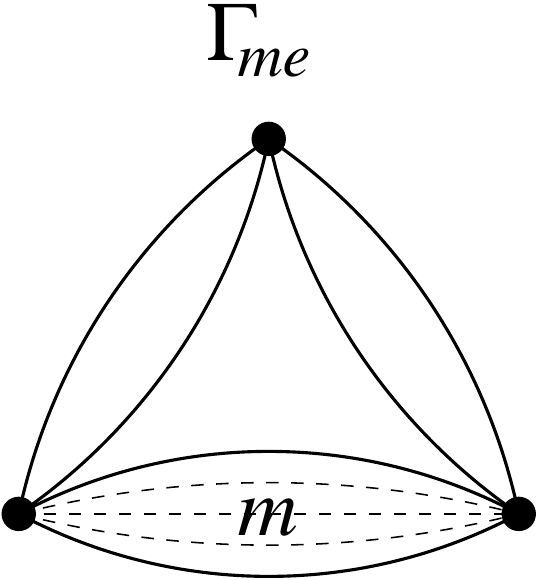}
\end{center}
\end{example}


\section{Alternative approach, via specialization}\label{speci}

\subsection{}\label{fami}
In this section we explain briefly a different approach to the question studied
in this paper, based on the theory of specialization of Chern classes; this theory
is originally due to Verdier (\cite{MR629126}). For simplicity, we now work over $\Cbb$.

Let $\{X_u\}$ be a family of hypersurfaces over a disk; assume that the fibers over
$u\ne 0$ are all isomorphic. Under suitable (and mild) hypotheses, one may define
a specific constructible function $\sigma$ on the central fiber $X=X_0$, with the
property that $\csm(\sigma)$ equals the specialization (in the sense of intersection
theory) of the CSM class of the general fiber. The value $\sigma(p)$ at a point
$p\in X$ equals the Euler characteristic of the intersection
of the $\epsilon$-ball centered at $p$ with nearby fibers $X_u$,
as $\epsilon\to 0$ and $|u|\ll \epsilon$. See \cite{MR629126}, Proposition~4.1.
It may also be computed in terms of an embedded resolution of $X$, 
see~\cite{verdier}.

In the situation we will consider here, the hypersurfaces $X_u$ will be elements
of a pencil in projective space. It is easy to verify that $\sigma(p)=1$ if $X$ is 
nonsingular at $p$, and $\sigma(p)=0$ if $p$ is a point of transversal intersection 
of two nonsingular components of $X$, provided that $p$ does not belong to
the base locus of the pencil. 
Parusi\'nski and Pragacz (\cite{MR2002g:14005}, Proposition 5.1) prove that 
$\sigma(p)=1$ if $p\in X_u\cap X$ ($u\ne 0$) {\em is\/} a point of the base locus, 
if $X_u$ is {\em smooth and transversal to the strata of a fixed Whitney stratification 
of $X$\/} at $p$. This beautiful observation will be used below.

\subsection{}
Returning to graph hypersurfaces, recall that if $e$ is a regular edge of $\Gamma$,
then
\[
\Psi_{\Gamma}=t_e\, \Psi_{\Gamma\smallsetminus e}+\Psi_{\Gamma/e}\quad;
\]
as $\Gamma\smallsetminus e$ is not a forest, both $\Psi_{\Gamma\smallsetminus e}$ 
and $\Psi_{\Gamma/e}$ are polynomials of positive degree.
Thus, we may view $\Psi_{\Gamma}$ as an element of the pencil spanned
by $t_e\, \Psi_{\Gamma\smallsetminus e}$ and $\Psi_{\Gamma/e}$: $u=1$ in
\begin{equation*}
\tag{*}
\Psi_u=t_e\, \Psi_{\Gamma\smallsetminus e}+u\,\Psi_{\Gamma/e}\quad.
\end{equation*}
We note that $X_u:\{\Psi_u=0\}$ is isomorphic to $X_{\Gamma}:\{\Psi_1=0\}$ for 
$u\ne 0$: indeed, with $d=\deg \Psi_{\Gamma}$, 
\begin{align*}
\Psi_u(u^2 t_e, u \mathbf t_{\ne e})
&=u^2\, t_e\Psi_{\Gamma\smallsetminus e}(u \mathbf t_{\ne e})
+u\, \Psi_{\Gamma/e}(u \mathbf t_{\ne e})
=u^{d+1}\, t_e \Psi_{\Gamma\smallsetminus e}(\mathbf t_{\ne e})
+u^{d+1}\, \Psi_{\Gamma/e} (\mathbf t_{\ne e}) \\
&= u^{d+1}\, \Psi_{\Gamma}(t_e,\mathbf t_{\ne e})\quad,
\end{align*}
where $\mathbf t_{\ne e}$ denotes the variables corresponding to edges other than $e$.
View (*) as defining a family as in \S\ref{fami}, with central fiber $X:\{\Psi_0=0\}$.
That is,
\[
X = \cone X_{\Gamma\smallsetminus e} \cup H\quad,
\]
where $n=$ number of edges in the graph $\Gamma$, $H\cong \Pbb^{n-2}$ is the 
hyperplane defined by $t_e=0$, and $\cone X_{\Gamma\smallsetminus e}$ denotes 
the cone over $X_{\Gamma\smallsetminus e}\subseteq H$ with vertex at the point 
$p=(t_e: \mathbf t_{\ne e})=(1: 0:\dots:0)$.
As a consequence of Verdier's theorem (Th\'eor\`eme~5.1 in \cite{MR629126}),
\[
\csm(\one_{X_{\Gamma}})=\csm(\sigma)\quad,
\]
where these classes are taken in $A_*\Pbb^{n-1}$, and $\sigma$ is the specialization
function on $X$ defined in \S\ref{fami}.

\subsection{}\label{condIpstat}
The difficulty with this approach lies in the explicit computation of $\sigma$.
Again we indicate two conditions under which a result may be obtained, matching
the result of the more algebraic approach taken in the rest of the paper.

The first condition is a `set-theoretic' version of condition~I:
\begin{equation*}
\tag{Condition I'} \partial X_{\Gamma\smallsetminus e}\subseteq X_{\Gamma/e}\quad.
\end{equation*}
As pointed out in \S\ref{condI}, Condition~I implies this inclusion at the level of 
schemes; here, we are only requiring it at the level of sets.
If this condition is satisfied, then the value of $\sigma$ may be determined for all points
of $X\smallsetminus X_\Gamma$. This consists of the complements of $X_\Gamma$
in $\cone X_{\Gamma\smallsetminus e}\smallsetminus H$,
in $\cone X_{\Gamma\smallsetminus e}\cap H$, and in
$H\smallsetminus \cone X_{\Gamma\smallsetminus e}$.

\begin{lemma}\label{condIp}
If condition~I' holds, then
\begin{itemize} 
\item For $q\in (\cone X_{\Gamma\smallsetminus e} \smallsetminus H)
\smallsetminus X_{\Gamma}$, $\sigma(q)=1$;
\item For $q\in (\cone X_{\Gamma\smallsetminus e}\cap H)
\smallsetminus X_\Gamma$, $\sigma(q)=0$.
\item For $q\in (H \smallsetminus \cone X_{\Gamma\smallsetminus e})
\smallsetminus X_{\Gamma}$, $\sigma(q)=1$;
\end{itemize}

In fact, $\sigma(q)=1$ for all $q\in H\smallsetminus \cone X_{\Gamma\smallsetminus e}$.
\end{lemma}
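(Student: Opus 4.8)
The plan is to handle the three regions of $X\smallsetminus X_\Gamma$ separately, invoking the three facts about the specialization function $\sigma$ recalled in \S\ref{fami}. Throughout I would use that the two components of the central fiber are $\cone X_{\Gamma\smallsetminus e}=\{\Psi_{\Gamma\smallsetminus e}=0\}$ and $H=\{t_e=0\}$, that the vertex $p$ lies on $X_\Gamma$ (both $\Psi_{\Gamma\smallsetminus e}$ and $\Psi_{\Gamma/e}$ vanish there), and that the base locus of the pencil is $\{t_e\Psi_{\Gamma\smallsetminus e}=0\}\cap\{\Psi_{\Gamma/e}=0\}$. The function of condition~I' is to convert the inequality $\Psi_{\Gamma/e}(q)\ne 0$ into the statement that $q$ avoids $\partial X_{\Gamma\smallsetminus e}$, and hence into a smoothness statement for the cone: since $\partial X_{\Gamma\smallsetminus e}\subseteq X_{\Gamma/e}$, the singular locus of $\cone X_{\Gamma\smallsetminus e}$ away from $p$ lies in $\{\Psi_{\Gamma/e}=0\}$.

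For the first bullet I would observe that a point $q\in(\cone X_{\Gamma\smallsetminus e}\smallsetminus H)\smallsetminus X_\Gamma$ satisfies $\Psi_{\Gamma\smallsetminus e}(q)=0$, $t_e(q)\ne 0$, and (as $q\notin X_\Gamma$) $\Psi_{\Gamma/e}(q)\ne 0$. By condition~I' it then lies off the singular locus of the cone and is not the vertex, and since $t_e(q)\ne 0$ the central fiber $X$ agrees with the smooth cone near $q$; thus $X$ is nonsingular at $q$ and $\sigma(q)=1$. For the second bullet, a point $q\in(\cone X_{\Gamma\smallsetminus e}\cap H)\smallsetminus X_\Gamma$ again has $\Psi_{\Gamma/e}(q)\ne 0$, so by condition~I' it is a smooth point of $X_{\Gamma\smallsetminus e}$, hence of the cone. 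Both components $H$ and $\cone X_{\Gamma\smallsetminus e}$ pass through $q$ and are nonsingular there, and they meet transversally because $\Psi_{\Gamma\smallsetminus e}$ does not involve $t_e$, so the cone direction $\partial_{t_e}$ is tangent to the cone but not to $H$. As $\Psi_{\Gamma/e}(q)\ne 0$ the point lies off the base locus, and the transversal-crossing fact gives $\sigma(q)=0$.

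The third bullet and the concluding ``in fact'' I would prove by a single direct local computation valid for every $q\in H$ with $\Psi_{\Gamma\smallsetminus e}(q)\ne 0$, that is, for every $q\in H\smallsetminus\cone X_{\Gamma\smallsetminus e}$. Near such a $q$ the function $\Psi_{\Gamma\smallsetminus e}$ is invertible, so the equation $\Psi_u=t_e\Psi_{\Gamma\smallsetminus e}+u\,\Psi_{\Gamma/e}=0$ can be solved as $t_e=-u\,\Psi_{\Gamma/e}/\Psi_{\Gamma\smallsetminus e}$, exhibiting $X_u$ as a graph over the remaining coordinates. For $0<|u|\ll\epsilon$ this graph meets the ball $B_\epsilon(q)$ in a contractible set, whose Euler characteristic is $1$; hence $\sigma(q)=1$. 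Since this argument never uses $\Psi_{\Gamma/e}(q)\ne 0$, it covers both the third bullet (where that inequality does hold) and all remaining points of $H\smallsetminus\cone X_{\Gamma\smallsetminus e}$.

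The main obstacle is precisely this last statement. The points with $\Psi_{\Gamma/e}(q)=0$ lie on the base locus, where the elementary facts no longer apply and one would ordinarily appeal to the Parusi\'nski--Pragacz transversality criterion of \S\ref{fami}; but that criterion can fail exactly at singular points of $X_{\Gamma/e}$. The graph presentation sidesteps this difficulty, so the real content is to recognize that the single inequality $\Psi_{\Gamma\smallsetminus e}(q)\ne 0$ already trivializes the family near $q$. I would also take care to justify that condition~I' in its set-theoretic form suffices for the smoothness and transversality claims in the first two bullets, and that the singular locus of the cone away from $p$ is the cone over $\partial X_{\Gamma\smallsetminus e}$.
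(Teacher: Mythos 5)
Your treatment of the first two bullets coincides with the paper's own proof: condition~I$'$ places the singular locus of $\cone X_{\Gamma\smallsetminus e}$ (away from the vertex $p$, which lies on $X_\Gamma$) inside $\{\Psi_{\Gamma/e}=0\}$, hence inside $X_\Gamma$, so the points in question are smooth points of the relevant component(s), off the base locus, and the two elementary values of $\sigma$ recalled in \S\ref{fami} apply; your explicit check that the two components meet transversally along $\cone X_{\Gamma\smallsetminus e}\cap H$ (the cone direction $\partial_{t_e}$ is tangent to the cone but not to $H$) is a detail the paper leaves implicit. Where you genuinely diverge is the third bullet together with the final assertion. The paper splits $H\smallsetminus \cone X_{\Gamma\smallsetminus e}$ into the points off the base locus (elementary fact again) and the points of $(H\cap X_\Gamma)\smallsetminus\cone X_{\Gamma\smallsetminus e}$, which do lie on the base locus and are handled by the Parusi\'nski--Pragacz criterion; that route needs $X_u$ smooth and transversal to $H$ at such points, which amounts to smoothness of $X_{\Gamma/e}$ there. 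You instead solve $\Psi_u=0$ for $t_e$ (legitimate because neither $\Psi_{\Gamma\smallsetminus e}$ nor $\Psi_{\Gamma/e}$ involves $t_e$ and $\Psi_{\Gamma\smallsetminus e}(q)\ne 0$) and read off $\chi(B_\epsilon(q)\cap X_u)=1$ from the graph presentation. This is more elementary and, as you point out, uses no hypothesis on $\Psi_{\Gamma/e}$ at $q$, so it covers all of $H\smallsetminus\cone X_{\Gamma\smallsetminus e}$ in one stroke and is arguably more robust than the transversality route. The only step you assert rather than prove is the contractibility of $B_\epsilon(q)\cap X_u$; this is true --- the set is homeomorphic to $\bigl\{t' : |t'-q'|^2+|u|^2\,|\Psi_{\Gamma/e}(t')/\Psi_{\Gamma\smallsetminus e}(t')|^2<\epsilon^2\bigr\}$, a $C^1$-small perturbation of the $\epsilon$-ball whose defining function has $\epsilon^2$ as a regular value, hence is diffeomorphic to a ball once $|u|\ll\epsilon$ --- but it deserves a line of justification in a final write-up.
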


\begin{proof}
If condition~I' holds, then the singularities of $X_{\Gamma\smallsetminus e}$ are
contained in $X_{\Gamma/e}$, and it follows easily that the singular locus of the
cone $\cone X_{\Gamma\smallsetminus e}$ is contained in $X_\Gamma$.

On the other hand, note that $\Psi_{\Gamma\smallsetminus e}$ is one of the
partial derivatives of $\Psi_\Gamma$; hence, the singularities of $X_\Gamma$ are
contained in $\cone X_{\Gamma\smallsetminus e}$.

Thus: the first statement holds since $\cone X_{\Gamma\smallsetminus e}$ is
nonsingular at $q$, and $q$ does not belong to the base locus of the pencil.
The third statement likewise holds because $q$ is not in the base locus,
and $H\cong \Pbb^2$ is nonsingular. In the second statement, $q$ is nonsingular on
both $\cone X_{\Gamma\smallsetminus e}$ and $H$, and these hypersuraces
intersect transversally at $p$, so $\sigma(q)=0$ as recalled in \S\ref{fami}.

To prove the last assertion, we have to consider $q\in (H\cap X_\Gamma)
\smallsetminus \cone X_{\Gamma\smallsetminus e}$. (We have already dealt with 
the other points in $H\smallsetminus \cone X_{\Gamma\smallsetminus e}$.) 
At these points, both $X$ and $X_\Gamma$ are nonsingular (by condition~I').
Further, the intersection $H\cap X_\Gamma = X_{\Gamma/e}$ is also nonsingular
at such points, again by condition~I'. Thus the hypotheses of the result of
Parusi\'nski and Pragacz recalled in~\S\ref{fami} are satisfied, and it follows
that $\sigma(q)=1$.
\end{proof}

\subsection{}\label{condIIpstat}
The locus unaccounted for in Lemma~\ref{condIp} is 
$X_\Gamma\cap \cone X_{\Gamma\smallsetminus e}$. These are points in the
base locus which are contained in the component $\cone X_{\Gamma\smallsetminus e}$.
{\em If\/} $X_\Gamma$ were nonsingular and transversal to the strata of 
$\cone X_{\Gamma\smallsetminus e}$ at these points, then (according to the
formula of Parusi\'nski and Pragacz) we would have
$\sigma(q)=1$ for $q\in X_\Gamma\cap \cone X_{\Gamma\smallsetminus e}$;
the deviation of $\sigma$ from $1$ at such points is a measure of 
non-transversality of the two hypersurfaces. Also note that this locus 
equals the cone over $X_{\Gamma\smallsetminus e}\cap X_{\Gamma/e}$
with vertex at $p$.
The following condition should again be interpreted as a subtle 
notion of `transversality' of the intersection of the various loci considered here.
\begin{equation*}
\tag{Condition II'} \text{$\sigma(q)=1$ for $q\in \cone X_{\Gamma\smallsetminus e}
\cap \cone X_{\Gamma/e}$}\quad.
\end{equation*}

\begin{lemma}\label{bothcondp}
Assume both conditions~I' and~II' hold. Then
\[
\sigma=\one_{\cone X_{\Gamma\smallsetminus e}}+\one_H
-\,2\one_{X_{\Gamma\smallsetminus e}}+
\one_{X_{\Gamma\smallsetminus e}\cap X_{\Gamma/e}}
\]
where the latter two mentioned loci are viewed as subsets of $H\cong \Pbb^{n-2}$.
\end{lemma}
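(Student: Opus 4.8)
The plan is to compute the specialization function $\sigma$ of \S\ref{fami} directly, stratum by stratum on the central fiber $X=\cone X_{\Gamma\smallsetminus e}\cup H$, and then to check that the proposed constructible function takes the same value at every point; since two constructible functions agree if and only if they agree pointwise, this suffices. I would stratify $X$ into (i) $\cone X_{\Gamma\smallsetminus e}\smallsetminus H$, (ii) $H\smallsetminus \cone X_{\Gamma\smallsetminus e}$, and (iii) $\cone X_{\Gamma\smallsetminus e}\cap H=X_{\Gamma\smallsetminus e}$ (the last identification viewing $X_{\Gamma\smallsetminus e}$ inside $H\cong\Pbb^{n-2}$). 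This is fine enough that each of $\cone X_{\Gamma\smallsetminus e}$, $H$, $X_{\Gamma\smallsetminus e}$ and $X_{\Gamma\smallsetminus e}\cap X_{\Gamma/e}$ is a union of strata, once (i) and (iii) are further refined by membership in $X_\Gamma$.

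First I would record that, under Condition~I', Lemma~\ref{condIp} already pins down $\sigma$ on all of $X$ except the locus $\cone X_{\Gamma\smallsetminus e}\cap X_\Gamma$. Indeed its three bullets give $\sigma=1$ on $(\cone X_{\Gamma\smallsetminus e}\smallsetminus H)\smallsetminus X_\Gamma$, $\sigma=0$ on $(\cone X_{\Gamma\smallsetminus e}\cap H)\smallsetminus X_\Gamma$, and its closing assertion gives $\sigma=1$ on all of $H\smallsetminus \cone X_{\Gamma\smallsetminus e}$; together these cover precisely $X\smallsetminus(\cone X_{\Gamma\smallsetminus e}\cap X_\Gamma)$. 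For the remaining locus I would use $\Psi_\Gamma=t_e\,\Psi_{\Gamma\smallsetminus e}+\Psi_{\Gamma/e}$: on $\cone X_{\Gamma\smallsetminus e}$ one has $\Psi_{\Gamma\smallsetminus e}=0$, so there $X_\Gamma$ is cut out by $\Psi_{\Gamma/e}=0$, whence $\cone X_{\Gamma\smallsetminus e}\cap X_\Gamma=\cone X_{\Gamma\smallsetminus e}\cap \cone X_{\Gamma/e}$. This is exactly the set named in Condition~II', so that condition supplies $\sigma=1$ on the whole leftover locus. At this point $\sigma$ is known at every point of $X$.

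It then remains to evaluate the right-hand side on each stratum and match. The two $H$-supported terms $\one_{X_{\Gamma\smallsetminus e}}$ and $\one_{X_{\Gamma\smallsetminus e}\cap X_{\Gamma/e}}$ are read inside $H$ and so vanish off $H$; moreover on $H$ the trace of $X_\Gamma$ is $X_{\Gamma/e}$, so inside stratum (iii) the clause `lies in $X_\Gamma$' reads `lies in $X_{\Gamma/e}$'. On stratum (i) a point lies on the cone but off $H$, so only $\one_{\cone X_{\Gamma\smallsetminus e}}$ contributes and the value is $1$, agreeing with $\sigma=1$ (from Lemma~\ref{condIp} off $X_\Gamma$ and from Condition~II' on $X_\Gamma$). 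On stratum (ii) only $\one_H$ contributes, giving $1=\sigma$. On the part of stratum (iii) off $X_{\Gamma/e}$ the value is $1+1-2=0=\sigma$, and on the part meeting $X_{\Gamma\smallsetminus e}\cap X_{\Gamma/e}$ the final term restores $1+1-2+1=1=\sigma$. Agreement on every stratum yields the stated identity.

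The whole argument is essentially bookkeeping once Lemma~\ref{condIp} and Condition~II' are available, so I do not expect a serious obstacle. The one genuinely substantive step is the identification $\cone X_{\Gamma\smallsetminus e}\cap X_\Gamma=\cone X_{\Gamma\smallsetminus e}\cap \cone X_{\Gamma/e}$, which is what allows Condition~II' to be invoked on exactly the locus left undetermined by Lemma~\ref{condIp}. The point needing the most care is the inclusion--exclusion bookkeeping: the coefficient $-2$ on $\one_{X_{\Gamma\smallsetminus e}}$ is forced so as to cancel the double contribution of $\one_{\cone X_{\Gamma\smallsetminus e}}+\one_H$ along the cone-in-$H$ locus (producing $\sigma=0$ there), while the trailing $+\one_{X_{\Gamma\smallsetminus e}\cap X_{\Gamma/e}}$ corrects this back to $1$ on the base-locus part, matching the value dictated by Condition~II'.
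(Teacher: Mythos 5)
Your proposal is correct and follows essentially the same route as the paper, whose proof of this lemma is just the remark that it ``follows from Lemma~\ref{condIp}, condition~II', and elementary bookkeeping''; your write-up supplies exactly that bookkeeping, including the key identification $\cone X_{\Gamma\smallsetminus e}\cap X_\Gamma=\cone X_{\Gamma\smallsetminus e}\cap\cone X_{\Gamma/e}$, which the paper records in \S\ref{condIIpstat} as the description of the locus left undetermined by Lemma~\ref{condIp}.
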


\begin{proof}
This follows from Lemma~\ref{condIp}, condition~II', and elementary bookkeeping.
\end{proof}

\subsection{}
Applying Verdier's theorem now recovers the same formula we obtained in 
Theorem~\ref{delconthm}

\begin{theorem}
Let $e$ be a regular edge of $\Gamma$. Assume $(\Gamma,e)$ satisfies 
both conditions~I' and~II' given above. Then
\[
C_{X_\Gamma}(t)=C_{X_{\Gamma\smallsetminus e}\cap X_{\Gamma/e}}(t)
+(t-1)\, C_{X_{\Gamma\smallsetminus e}}(t)\quad.
\]
\end{theorem}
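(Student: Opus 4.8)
The plan is to feed the formula for the specialization function $\sigma$ from Lemma~\ref{bothcondp} into Verdier's theorem and then translate the resulting identity of CSM classes into the polynomial invariant $C$. Since $X_u\cong X_\Gamma$ for $u\ne 0$, Verdier's theorem recalled above gives $\csm(\one_{X_\Gamma})=\csm(\sigma)$ in $A_*\Pbb^{n-1}$, so the entire computation is carried out on the central fiber $X$.

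First I would use the additivity of $\csm$ on constructible functions together with Lemma~\ref{bothcondp} to obtain
\[
\csm(\one_{X_\Gamma})=\csm(\cone X_{\Gamma\smallsetminus e})+\csm(\one_H)
-2\,\csm(X_{\Gamma\smallsetminus e})+\csm(X_{\Gamma\smallsetminus e}\cap X_{\Gamma/e})\quad,
\]
where the last two classes are supported on $H\cong\Pbb^{n-2}$. I would then pass to polynomials via the defining relation of Lemma~\ref{poly}: the assignment $\gamma\mapsto -\sum_{i\ge 0} t^{i+1}\int H^i\cap\gamma$ is linear, so the displayed identity becomes a linear relation among the associated polynomials, up to the dimension-dependent additive constants $(1+t)^n-1$ (for hypersurfaces of $\Pbb^{n-1}$) and $(1+t)^{n-1}-1$ (for subschemes of $\Pbb^{n-2}$) built into the definition of $C$.

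Three inputs feed this translation. For the hyperplane, $\csm(\one_H)=c(T\Pbb^{n-2})\cap[\Pbb^{n-2}]=(1+h)^{n-1}\cap[\Pbb^{n-2}]$, whose associated transform I expect to compute to $-(1+t)^{n-1}+1$. For the cone, the CSM cone formula (Proposition~5.2 in \cite{MR2504753}), exactly as used in the proof of Theorem~\ref{doubled}, gives $C_{\cone X_{\Gamma\smallsetminus e}}(t)=(t+1)\,C_{X_{\Gamma\smallsetminus e}}(t)$. For the two loci lying in $H$, intersecting with powers of $H$ in $\Pbb^{n-1}$ agrees with intersecting with powers of $h$ in $\Pbb^{n-2}$, so their contributions are precisely $C_{X_{\Gamma\smallsetminus e}}(t)$ and $C_{X_{\Gamma\smallsetminus e}\cap X_{\Gamma/e}}(t)$. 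Collecting, the coefficient of $C_{X_{\Gamma\smallsetminus e}}(t)$ is $(t+1)-2=t-1$ and the coefficient of $C_{X_{\Gamma\smallsetminus e}\cap X_{\Gamma/e}}(t)$ is $1$, yielding the stated formula once all the remaining constants cancel.

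The main obstacle I anticipate is the bookkeeping of these additive constants rather than any new geometry: the cone and $H$ sit in $\Pbb^{n-1}$ while $X_{\Gamma\smallsetminus e}$ and $X_{\Gamma\smallsetminus e}\cap X_{\Gamma/e}$ sit in $\Pbb^{n-2}$, so I must check that the $(1+t)^n$ shift carried by the cone cancels against the one built into $C_{X_\Gamma}$ itself, and that the $(1+t)^{n-1}$ shifts coming from $H$, from $X_{\Gamma\smallsetminus e}$ (with multiplicity $-2$), and from the intersection sum to zero. The only properly geometric ingredient beyond Verdier's theorem and Lemma~\ref{bothcondp} is the cone formula, which is already in hand; the rest is linear algebra in the transform.
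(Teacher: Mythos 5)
Your proposal is correct and follows essentially the same route as the paper: Verdier's theorem plus Lemma~\ref{bothcondp} give the four-term decomposition of $\csm(\one_{X_\Gamma})$, the cone formula supplies $C_{\cone X_{\Gamma\smallsetminus e}}(t)=(t+1)\,C_{X_{\Gamma\smallsetminus e}}(t)$, and the dimension-dependent constants in the definition of $C$ cancel exactly as you anticipate (the $\csm(\one_H)$ term absorbs the shift from $(1+t)^n$ to $(1+t)^{n-1}$, and the coefficients $1-2+1$ of the remaining $\Pbb^{n-2}$ constants sum to zero). The paper simply performs this bookkeeping silently; your more explicit accounting is the same argument.
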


\begin{proof}
Applying Verdier's theorem and additivity of CSM classes, we get
\[
\csm(X) = \csm(\sigma) = 
\csm(\one_{\cone X_{\Gamma\smallsetminus e}})+\csm(\one_H)-
2\,\csm(\one_{X_{\Gamma\smallsetminus e}})
+\csm(\one_{X_{\Gamma\smallsetminus e}\cap X_{\Gamma/e}})\quad.
\]
Recalling $H\cong \Pbb^{n-2}$, and expressing in terms of polynomial Feynman rules,
this gives
\[
C_{X_\Gamma}(t)=(t+1)C_{X_{\Gamma\smallsetminus e}}(t)
-2\, C_{X_{\Gamma\smallsetminus e}}(t)
+C_{X_{\Gamma\smallsetminus e}\cap X_{\Gamma/e}}(t)\quad,
\]
with the stated result. Here we also used the fact that 
$C_{\cone X_{\Gamma\smallsetminus e}}(t)=(t+1)\, C_{X_{\Gamma\smallsetminus e}}$,
an easy consequence of Proposition~5.2 in \cite{MR2504753}.
\end{proof}

\subsection{}
In conclusion, we have verified that the deletion-contraction formula obtained under
the `algebraic' conditions~I and~II described in \S\ref{condis} also holds under
the conditions~I' and~II' given in \S\S\ref{condIpstat}-\ref{condIIpstat}.

These latter conditions have a more `geometric' flavor: condition~I' is a set-theoretic 
statement, and condition~II' amounts to a statement on the Euler characteristics of 
intersections of an $\epsilon$-ball with nearby fibers in a fibration naturally associated 
with the pair $(\Gamma,e)$. In this sense they are perhaps easier to appreciate, although
in practice the algebraic counterparts I and~II are more readily verifiable in given
cases, by means of tools such as \cite{M2}. It would be interesting to relate these
two sets of conditions more precisely: does $(\Gamma,e)$ satisfy I and~II if and
only if it satisfies I' and~II'? Do these conditions admit a transparent combinatorial 
interpretation?



\end{document}